\definecolor{grayy}{rgb}{0.5,0.5,0.5}
\definecolor{greenn}{rgb}{0,0.5,0}
\newcommand{\abs}[1]{\left| #1 \right|}
\newtheorem{theorem}{Theorem}
\newtheorem*{theorem*}{Theorem}
\newtheorem{lemma}{Lemma}
\newtheorem*{lemma*}{Lemma}
\newtheorem{proposition}{Proposition}
\newtheorem*{proposition*}{Proposition}
\newtheorem{corollary}{Corollary}
\newtheorem*{corollary*}{Corollary}
\newtheorem{proc}{Procedure}
\newtheorem*{example*}{Example}
\newtheorem{definition}{Definition}
\newtheorem*{definition*}{Definition}
\theoremstyle{remark}
\newtheorem{remark}{Remark}
\newtheorem*{remark*}{Remark}
\newcommand\mshift[2]{{}^{#2}\!{#1}}
\newcommand{\vsg}[1]{{<}#1{>}}
\newcommand{\Y}{Y_0,Y_1,\ldots,Y_n}
\DeclareMathOperator{\ord}{ord}
\DeclareMathOperator{\supp}{supp}
\DeclareMathOperator{\Coeff}{Coeff}
\DeclareMathOperator{\coeff}{Coeff}
\DeclareMathOperator{\NIC}{NIC}
\DeclareMathOperator{\RatRk}{rat.rk}
\def\vq{\vert q \vert}
\def\ud{\underline{d}}
\author{J. Cano}
\address{Universidad de Valladolid, Spain}
\email{jcano@agt.uva.es}
\author{P. Fortuny Ayuso}
\address{Universidad de Oviedo, Spain}
\email{fortunypedro@uniovi.es}
\date{\today}
\title[Newton-Puiseux Polygon of non-linear $q-$Difference Equations]{Power Series Solutions of Non-Linear $q$-Difference
  Equations and the Newton-Puiseux Polygon}
\subjclass[2010]{39A13}
\thanks{Partially supported by the Ministerio de Ciencia e Innovación (Spain), Proyect id. PID2019-105621GB-I00}
\newcommand{\pedro}{}
\newcommand{\jose}{\ifthenelse{\boolean{jose}}{\color{blue}
    \setboolean{jose}{false}}{\color{black}\setboolean{jose}{true}}}
\begin{document}
\begin{abstract}
  Adapting the Newton-Puiseux Polygon process to nonlinear 
  $q-$di\-ffe\-ren\-ce
  equations of any order and degree, we compute their power series
  solutions, study the properties of the set of exponents of  the
  solutions and give
  a bound for their $q-$Gevrey
  order in terms of the order of the original equation.
\end{abstract}
\maketitle

\section{Introduction}
The Newton Polygon construction for solving equations in terms of
power series and its generalization by Puiseux has been successfully
used 
countless  
times both in the algebraic \cite{Puiseux1}, \cite{Puiseux2}
\cite{Hironaka-polyhedra} and in
the differential contexts
\cite{Fine}, \cite[Ch. V]{Forsyth}, \cite{Ince}, \cite{Grigoriev-Singer}, \cite{CanoJ},
\cite{Cano-Fortuny-Asterisque}, \cite{DellaDoraJung}, \cite{vDH-LN}
(this is just a biased and brief sample, 
see also 
\cite{Christensen-1996} and \cite[Sec. 29]{Enriques} for an
interesting detailed historical narrative).
We extend
its use to $q-$difference equations.

Although this construction is primarily intended to give a method 
for computing formal power series solutions, 
we will use it for proving the $q$-analog of some results
concerning the nature of power series solutions of non linear
differential equations. Namely, we show properties about the growth of
the coefficients of a power series solution (Maillet's theorem) and
about the set of exponents of a generalized power series solution.

The method allows us, first of all, to show that the set of exponents
of \pedro{}power series solutions with well-ordered exponents in
$\mathbb{R}$\pedro{} of a formal $q$-difference equation is
\pedro{}included in the translation by a constant of a finitely generated semigroup over
$\mathbb{Z}_{\geq 0}$\pedro{} (in particular, it has finite rational
rank and if the exponents are all rational, then their denominators
are bounded).  This mirrors the results of D. Y. Grigoriev and
M. Singer in \cite{Grigoriev-Singer} for differential equations.  When the
$q$-difference equation is of first order and first degree, we give a
bound for this rational rank (see Theorem~\ref{th:bound-rati-rank} for
a precise statement). We also study properties related to what we call
``finite determination'' (Definition~\ref{de:finite-determination}) of
the coefficients of the solutions. This is one of the places in which
the case $|q|=1$ is essentially different from the general case. For
$|q|\neq 1$, we prove the finite determination of the coefficients.

Maillet's theorem~\cite{Maillet} is a classical results about the
growth of the 
coefficients $a_i$ of a formal power series solution of a (non-linear)
differential equation: it states that
$|a_i|\leq i!^{s}\,R^{i}$, for some 
constants $R$ and $s$. Among the different proofs 
(for  instance
\cite{Maillet,Mahler,Gerard}), 
B. Malgrange's~\cite{Malgrange-sur-maillet}  includes
a precise  bound for~$s$. This bound is 
optimal except for one case: when the linearized operator along the
solution has a 
regular singularity and the solution is a ``non-regular solution'',
for which any $s>0$ works
(see the last remark in Malgrange's paper);
we shall refer to it as the (RS-N) case.  
In~\cite{CanoJ}, the Newton Polygon method allows the author to prove
Maillet's result and to show 
convergence (i.e. $s=0$) in the (RS-N) case.

The first studies on convergence of solutions of non-linear
$q$-difference equations are due to B\'ezivin 
\cite{Bezivin-q-differences},
\cite{Bezivin-convergence-certain-eq-fun} and
\cite{Bezivin-eq-q-diff-p-adiques}.
The $q$-analog of Maillet's theorem states that when
$|q|>1$, 
a formal power 
series solution of a $q$-difference equation 
with analytic coefficients is $q$-Gevrey of some order $s$ (see
Definition~\ref{def:q-gevrey-order}). 
Zhang~\cite{Zhang} proves this  adapting Malgrange's proof to the case
of $q$-difference--differential 
convergent equations. In this paper,
the adaptation of the Newton Polygon 
to $q$-difference equations allow us to give a new proof of 
the $q$-analogue of Maillet's theorem and to extend it to the 
$q$-Gevrey non-convergent case. 
The bounds obtained for convergent equations match 
Zhang's in general 
 and are more accurate in the (RS-N) case. 
However, we cannot prove convergence in this case unlike for
differential equations.

{} The first version of this paper was uploaded to the arXiv as
\cite{Cano-Fortuny-Arxiv-2012} in 2012. Parts of the second section
became a chapter of \cite{Barbe-Cano-Fortuny-McCormick-arxiv}, a joint
work with Ph. Barbe and W. McCormick dealing with solutions of
\emph{algebraic} $q$-difference equations. In that joint book, some
results concerning the asymptotic behavior of solutions are provided,
but the ones here are previous, more general (power series) and
stronger (due to the specific technique). However, we remark that in
\cite{Barbe-Cano-Fortuny-McCormick-arxiv} the topics are broader:
analytic, entire and formal solutions, the radius of convergence,
conditions describing the possible poles of analytic solutions,
associated objects which provide information on the solution
(Borel-type transforms), and many exhaustive examples, among which:
the colored Jones equation for the figure $8$ knot, the $q$-Painlevé I
equation, and other combinatorial equations. Thus, the present paper
is transverse to the book, \pedro{}and the Newton Polygon method
applied to $q$-difference equations (which appears in both) was first
used in this work.\pedro{} {}

{}We note, also,{} that the ``Newton Polygon'' construction used in the case of
linear operators by Adams \cite{Adams}, 
Ramis \cite{Ramis-q-diff-toulouse}, 
Sauloy \cite{Sauloy-filtration-Fourier} and others is different from
the one presented here. In the linear case, the Newton Polygon is used to
find local invariants of the operator while our Newton Polygon
 is constructed with the aim of looking for formal power
series solutions. In Section~\ref{sec:qGevreyOrder} 
we describe the relation between Adams' Newton Polygon
and Zhang's bounds.  Adams' construction is also used
in \cite{Li-Zhang-2011}
to give conditions for the convergence
of the solution(s) of analytic nonlinear $q$-difference equations.

For the reader's convenience, we include a final section with a
detailed working example describing most of the constructions and the
evolution of the Newton Polygon as one computes the successive terms
of a solution.

\section{The Newton-Puiseux Polygon process for $q$-difference equations}%
Let $q$ be a nonzero complex number. For $j\in \mathbb{Z}$, 
let us denote by $\sigma^j$ the automorphism of
the ring ${\mathbb C}[[x]]$ of formal power series in one
variable  given by \pedro{}$\sigma^j(y)(x)=y(q^jx)$\pedro{}, that is,
\begin{displaymath}
  \sigma^j(\sum_{i=0}^\infty a_i\,x^i) = 
\sum_{i=0}^\infty q^{i\,j}\,a_i\,x^i.
\end{displaymath}


Let $ P(x,Y_{0},Y_{1},\dots,Y_{n})\in
\mathbb{C}[[x,Y_{0},\dots,Y_{n}]]$ be a formal power series.  
For $y\in \mathbb{C}[[x]]$, with
$\ord_x(y)>0$, the expression
$P(x,y,\sigma^1(y),\dots,\sigma^n(y))$ 
is a well-defined element of
$\mathbb{C}[[x]]$ that we will be denoted by $P[y]$.  
We associate to $P(x,Y_{0},Y_{1},\dots,Y_{n})$ 
the \emph{$q-$difference equation} 
\begin{equation}
  \label{eq:q-difference-equation-intro}
  P(x,y,\sigma^1(y),\dots,\sigma^n(y))= 0.
\end{equation}

We will look for solutions of
equation~(\ref{eq:q-difference-equation-intro}) 
as formal power series with real exponents. 
We restrict ourselves to 
the Hahn field
$\mathbb{C}((x^{\mathbb{R}}))$   of 
generalized power series, that is, formal power
series of the form   
$\sum_{\gamma\in \mathbb{R}}
c_{\gamma}x^{\gamma}$ 
whose support $\{\gamma\mid
c_{\gamma}\neq 0\}$ is a  well-ordered subset of
$\mathbb{R}$ and $c_\gamma\in \mathbb{C}$.  
Hahn fields  were essentially introduced in  
\cite{Hahn-series}; see \cite{Ribenboim} for a detailed proof of the
ring structure and 
\cite{vdH-generalized-power-series} for a modern study in the
context of functional equations. We fix a determination of the
logarithm and extend the automorphism $\sigma$ to
$\mathbb{C}((x^{\mathbb{R}}))$ by setting
\begin{displaymath}
  \sigma(\sum_{\gamma\in \mathbb{R}}
c_{\gamma}\,x^{\gamma}) = 
\sum_{\gamma\in \mathbb{R}}
q^{\gamma}\,c_{\gamma}\,x^{\gamma}.
\end{displaymath}
For $y\in \mathbb{C}((x^{\mathbb{R}}))$,  
its order $\ord(y)$
is the minimum of its support if $y\neq 0$ and
$\ord(0)=\infty$.  
In subsection \ref{sec:compositio-meaning}, we shall see that  
if $\ord(y)>0$ then the expression
$P(x,y,\sigma^1(y),\dots,\sigma^n(y))$ 
is a well-defined element of $\mathbb{C}((x^{\mathbb{R}}))$, hence
equation~(\ref{eq:q-difference-equation-intro}) makes sense in our setting. 

Although we look for solutions in the Hahn field, 
their support has some
finiteness properties, as in the case for differential equations.
We say that $y\in \mathbb{C}((x^{\mathbb{R}}))$ is a
grid-based series if there exists $\gamma_0\in \mathbb{R}$ and
a finitely generated semigroup $\Gamma\subseteq \mathbb{R}_{\geq 0}$
 such that the support of $y$ is contained in 
$\gamma_0+\Gamma$. 
Puiseux series are the particular case of grid-based series in which
$\gamma_0\in \mathbb{Q}$ and $\Gamma\subseteq \mathbb{Q}$.
Puiseux series 
and grid-based series form
subfieds of the Hahn field denoted respectively by $\mathbb{C}((x^\mathbb{Q}))^g$ and 
$\mathbb{C}((x^{\mathbb{R}}))^g$. We have
\begin{displaymath}
  \mathbb{C}[[x]]\subseteq \mathbb{C}((x^\mathbb{Q}))^g
  \subseteq \mathbb{C}((x^{\mathbb{R}}))^g \subseteq 
 \mathbb{C}((x^{\mathbb{R}})).
\end{displaymath}

If equation~(\ref{eq:q-difference-equation-intro}) is algebraic, i.e.
of the form $P(x,y)=0$, then  by Puiseux's Theorem  
all its formal power series solutions are of Puiseux type.
This is no longer true if instead of ${\mathbb C}$, 
the base field is of positive characteristic,
as the following example (due essentially to Ostrowski) shows: 
the equation $-y^p+x\,y+x=0$ over the field $\mathbb{Z}/p\mathbb{Z}$
has as solution the generalized power series 
$y=\sum_{i=1}^{\infty}x^{\mu_i}$ with 
$\mu_i={(p^i-1)/(p^{i+1}-p^{i})}$. Notice that the exponents are
rational but they do not have a common denominator and moreover 
$\mu_1<\mu_2<\cdots<1/(p-1)$ so that they do not even go to infinity.  
Hence $y$ is neither a Puiseux series
nor a grid-based series.

As in the case of differential equations, the number of generalized
power series solutions of a  
given equation~(\ref{eq:q-difference-equation-intro}) is not necessary
finite, neither all of its solutios are of Puiseux type. For instance,
the $q$-difference 
equation $Y_0\,Y_2-Y_1^2=0$ has $c\,x^{\mu}$ as solutions for any
$c\in \mathbb{C}$ and $\mu\in \mathbb{R}$.

\subsection{The Newton Polygon}
\label{sec:newt-puis-polyg}%
Let $\mathcal{R}=\mathbb{C}[[x^{\mathbb{R}_{\geq 0}}]]$ be the
ring of generalized power series with non-negative order. 
For a finitely generated semigroup of 
$\Gamma\subset\mathbb{R}_{\geq  0}$, the ring
$\mathbb{C}[[x^{\Gamma}]]$
formed by those generalized power series with support contained in
$\Gamma$
is denoted by $\mathcal{R}_\Gamma$.
Let $P\in
\mathcal{R}[[Y_0,Y_1,\ldots,Y_n]]$ 
be a nonzero formal power series in $n+1$
variables over~$\mathcal{R}$.
For $\rho=(\rho_0,\rho_1,\ldots,\rho_n)\in\mathbb{N}^{n+1}$, 
we shall write
$Y^{\rho}=Y_{0}^{\rho_{0}}\cdot Y_1^{\rho_1}\cdots Y_{n}^{\rho_{n}}$;
we shall also write $\mathcal{R}[[Y]]$ instead of $\mathcal{R}[[\Y]]$.
The coefficient of $Y^{\rho}$ in $P$ will be denoted
$P_\rho(x)\in
\mathcal{R}$ and, 
for $\alpha\in \mathbb{R}$, 
the coefficient of $x^{\alpha}$ in
$P_\rho(x)$ will be denoted 
$P_{\alpha,\rho}\in \mathbb{C}$.
\pedro{}
Notice that, as $P\in \mathcal{R}[[Y_0,Y_{1},\ldots, Y_{n}]]$, each coefficient $P_{\rho}(x)$ belongs to $\mathcal{R}$, which means that $P_{\rho}(x)$ is a power series with well-ordered support contained in $\mathbb{R}_{\geq 0}$. Thus, we can write:
\pedro{}
$$
P=
\sum_{\rho\in \mathbb{N}^{n+1}}
P_\rho(x)\,Y^{\rho},\quad \text{and}\quad
P_\rho(x)=
\sum_{\alpha\in\Gamma_\rho} P_{\alpha,\rho}\,
x^{\alpha},
$$
where for each $\rho$, $\Gamma_\rho$ is a well-ordered
subset of $\mathbb{R}_{\geq 0}$ \pedro{}(in general, the $\Gamma_{\rho}$ will all be different)\pedro{}. 
We associate to $P$ 
its  \emph{cloud of points} $\mathcal{C}(P)$: the set of 
points \pedro{}$(\alpha,|\rho|)\in {\mathbb R}^2$ with
$|\rho|=\rho_{0}+\rho_{1}+\dots+\rho_{n}$, for all $(\alpha,\rho)$ such that $P_{\alpha,\rho}\neq
0$\pedro{}.

The \emph{Newton Polygon}  $\mathcal{N}(P)$ of $P$
is the convex hull of
\begin{displaymath}
  \bar{\mathcal{C}}(P)=
  \{(\alpha+r,|\rho|)\mid (\alpha,|\rho|)\in \mathcal{C}(P),\,\, r\in
  \mathbb{R}_{\geq 0}\}.
\end{displaymath}
A \pedro{}\emph{supporting line} $L$ of $\mathcal{N}(P)$ is a line
such that $\mathcal{N}(P)$ is contained in the closed right half-plane
defined by $L$, and $L\cap \mathcal{N}(P)$\pedro{} is not empty, that
is a line meeting $\mathcal{N}(P)$ on its border.

{}
\begin{figure}[h!]
  \centering
  \begin{tikzpicture}[scale=0.7]
    \draw[step = 0.5,gray!30!white, thin] (0,0) grid (9,6);
    \foreach \i in {1,2,...,8}
    {\draw (\i,0) node[anchor=north] {$\i$};}
    \foreach \i in {1,2,...,6}
    {\draw (0,\i) node[anchor=east] {$\i$};}
    \draw(0,6) -- (0,0) -- (9,0);
    \draw[fill=black] (0,4) circle(3pt);
    \draw[fill=black](1,2) circle(3pt);
    \draw[fill=black] (3,1) circle(3pt);
    \draw[fill=black] (5,0) circle(3pt);
    \draw[fill=black] (3,6) circle(3pt);
    \draw[line width=1pt](9,6) -- (3,6) -- (0,4) -- (1,2) --
    (3,1) -- (5,0) -- (9,0);
    \draw[fill=gray!10!white, opacity=0.3]
    (9,6) -- (3,6) -- (0,4) -- (1,2) --
    (3,1) -- (5,0) -- (9,0);
    \draw[dashed] (-1,6) node[anchor=east]{$L(P;1/2)$} -- (2,0);
    \draw[dashed] (6,-.2) -- (-1,1.2) node[anchor=east] {$L(P;5)$};
    \draw[dashed] (4,6.5) -- (-1,4) node[anchor=east] {$L(P;-2)$};
    \draw[->](9,0) --(10,0) node[anchor=north]{$\alpha$};
    \draw[->](0,6) -- (0,7) node[anchor=east]{$|\rho|$};
  \end{tikzpicture}
  \caption{Cloud, Newton polygon and some supporting lines of $P$ in \eqref{eq:example}.}
  \label{fig:newton-polygon-1}
\end{figure}
Figure \ref{fig:newton-polygon-1} shows the points in the cloud and
the Newton polygon (bold lines) of the following polynomial (which
will be extensively studied in Section 5):
\begin{multline}\label{eq:example}
  P=-x^3\,{ Y_0}^4\,{ Y_5}^2
  +4\,{Y_1}^4
  -9\,{Y_0}^2\,{ Y_1}\,{ Y_2}
  +2\,{ Y_0}^3\,{ Y_2} \\
+q^{-4}{x{ Y_0}\,{ Y_2}}
-q^{-4}{x^3\,{ Y_2}}
-x^3\,{ Y_0}+x^5.
\end{multline}
{}Notice that the \pedro{}ordinate axis corresponds to $|\rho|$\pedro{}.

It will be convenient to speak about the \emph{co-slope} of a line
as the opposite of the inverse of its slope, the co-slope of a
vertical line being $0$. In order to deal with  the particular case
in which $P$ is a polynomial in the variables $Y_0,Y_1,\ldots,Y_n$ we
\pedro{}define:
\begin{equation*}
  \mu_{-1}(P) = 
  \left\{
    \begin{array}{l@{\ \ \ }l}
      -\infty & \mbox{if}\ P\ \mbox{is a polynomial in}\  Y_0,\ldots,Y_{n}\\
      0 & \mbox{otherwise}
    \end{array}
  \right.
\end{equation*}
Finally, from now on we assume $P\neq 0$ everywhere.
\pedro{}
\begin{lemma}\label{le:numero_finito_lados} 
  Let $P\in \mathcal{R}[[Y]]$. For any $\mu>\mu_{-1}(P)$
  there exists a unique supporting line of $\bar{\mathcal{C}}(P)$
  with co-slope $\mu$ and
  the Newton polygon $\mathcal{N}(P)$ has
  a finite number of sides with co-slope greater or equal than $\mu$. If
  $P$ is a polynomial then $\mathcal{N}(P)$ has a finite number of sides
  and vertices. If $P\in \mathcal{R}_\Gamma[[Y]]$ for some finitely
  generated semigroup $\Gamma\subseteq
  \mathbb{R}_{\geq0}$, then 
  the Newton Polygon $\mathcal{N}(P)$ has a finite number of sides with positive
  co-slope. 
\end{lemma}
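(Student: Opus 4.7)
The plan is to reduce each of the four claims to the behavior of the single function
\[
c(\mu) = \inf\{\alpha + \mu j : (\alpha,j) \in \mathcal{C}(P)\},
\]
since a supporting line of co-slope $\mu$ is exactly $\alpha + \mu j = c(\mu)$, provided the infimum is attained. I would first observe that for each $j \in \mathbb{N}$ the set of $\alpha$ with $(\alpha, j) \in \mathcal{C}(P)$ is a finite union of the well-ordered sets $\Gamma_\rho$ (over the finitely many $\rho$ with $|\rho|=j$), hence itself well-ordered; when nonempty it admits a minimum $\alpha_j$, and so $c(\mu) = \inf_j (\alpha_j + \mu j)$.

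For existence and uniqueness of the supporting line, when $\mu > 0$ the bound $\alpha_j + \mu j \geq \mu j$ forces $\alpha_j + \mu j \to \infty$, so only finitely many indices $j$ can compete for the infimum, which is therefore attained. If $P$ is a polynomial, only finitely many $j$ appear at all, so the same argument works for every $\mu \in \mathbb{R}$, which is exactly the content of $\mu_{-1}(P) = -\infty$. Uniqueness is automatic because the supporting-line condition pins down the constant $c$.

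For the finiteness of sides of co-slope $\geq \mu$, I would use that $c(\mu)$ is concave and piecewise-linear in $\mu$, and that its breakpoints are precisely the co-slopes of the sides of $\mathcal{N}(P)$. A short convexity computation shows that across each breakpoint $\mu_0$ the active index $j^\star(\mu) := \min\{j : \alpha_j + \mu j = c(\mu)\}$ strictly decreases: if $j_-$ and $j_+$ both realize the minimum at $\mu_0$ while only $j_+$ does for $\mu > \mu_0$, then the inequality $(\mu - \mu_0)(j_+ - j_-) < 0$ forces $j_+ < j_-$. Since $j^\star \in \mathbb{N}$, the number of breakpoints in any half-line $[\mu,\infty)$ with $\mu > 0$ is finite; in the polynomial case, $j^\star$ ranges over a finite set overall, so the total number of breakpoints (hence sides and vertices of $\mathcal{N}(P)$) is finite.

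For the grid-based statement, the essential ingredient is the fact that a finitely generated subsemigroup $\Gamma \subseteq \mathbb{R}_{\geq 0}$ is well-ordered: for any bound $M$, only finitely many non-negative integer combinations of fixed generators lie below $M$, ruling out infinite strictly-decreasing sequences. Then $P \in \mathcal{R}_\Gamma[[Y]]$ forces $\alpha_j \in \Gamma$ for every $j$, while the $\alpha$-coordinates of consecutive SW-boundary vertices of $\mathcal{N}(P)$ form a strictly decreasing sequence as $j$ grows, which is precisely what a side of positive co-slope means numerically. Well-orderedness of $\Gamma$ then forces this sequence to be finite. The main obstacle I foresee is exactly this last passage: one has to notice that the hypothesis ``$\Gamma$ finitely generated'' gives well-orderedness, and that this property pushes through the $\alpha_j$ to bound the number of descents along the boundary of $\mathcal{N}(P)$, separating the global finiteness in (4) from the merely $\mu$-bounded finiteness in (2).
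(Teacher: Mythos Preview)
Your argument is correct, and the underlying idea---that for a fixed $\mu>\mu_{-1}(P)$ only the points of bounded ordinate can affect the supporting line and the sides of co-slope $\geq\mu$---is the same as the paper's. The presentations differ: the paper simply picks any point $q\in\mathcal{C}(P)$, draws the line of co-slope $\mu$ through it, reads off the $y$-intercept $(0,h)$, and observes that only the finitely many points $(\alpha_\rho,|\rho|)$ with $|\rho|\leq h$ are relevant. This gives the first two statements in one stroke without introducing $c(\mu)$ or tracking $j^\star$; your concavity/breakpoint analysis is a perfectly valid but more elaborate repackaging of that same finiteness.

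For the grid-based clause there is a small genuine difference. The paper invokes the fact that for $\Gamma$ finitely generated, $\Gamma\cap\{r<\alpha\}$ is \emph{finite} for every $\alpha>0$ (i.e.\ $\Gamma$ has order type $\leq\omega$), which immediately bounds the number of distinct $\alpha$-coordinates of boundary vertices below a fixed abscissa. You use only the weaker property that $\Gamma$ is well-ordered, ruling out infinite strictly decreasing sequences of vertex abscissae. Both are true for finitely generated $\Gamma\subseteq\mathbb{R}_{\geq 0}$ and both suffice; the paper's version is a bit more concrete, while yours shows that well-orderedness alone already does the job.
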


The unique supporting line with co-slope $\mu$ will be denoted
henceforward $L(P;\mu)$. 
\begin{proof}
  If $P$ is a polynomial, let $h$ be its total degree {}in{}
  the variables
  $Y_0,\ldots,Y_n$. Otherwise we define $h$ as follows: 
  since $P\neq 0$ the set  $\mathcal{C}(P)$ is 
  nonempty; take a point  $q\in \mathcal{C}(P)$ and let $L$ be
  the line passing through $q$ with co-slope~$\mu$. Let $(0,h)$ be the
  intersection of $L$ with the $OY$-axis.  For each $\rho\in
  \mathbb{N}^{n+1}$, write $\alpha_\rho=\ord\,P_\rho(x)$.
  Only the finite number of  points $(\alpha_\rho,|\rho|)$ with
  $|\rho|\leq h$  {}and $P_{\rho}(x)\neq 0${} are relevant
  for the definition of the line $L(P;\mu)$ and for the construction of 
  sides 
  with co-slope greater or equal than $\mu$ of $\mathcal{N}(P)$. This
  proves the two first statements, 
  the last one is a
  consequence of the fact that for a given $\alpha>0$, the set
  $\Gamma\cap \{r<\alpha\}$ is finite.
\end{proof}

For $\mu>\mu_{-1}(P)$, define the following polynomial in the variable $C$:
\begin{displaymath}
  \Phi_{(P;\mu)}(C)=\sum_{(\alpha,|\rho|)\in L(P;\mu)}
  P_{\alpha,\rho}\,q^{\mu\,w(\rho)} \,C^{|\rho|}, 
\end{displaymath}
where $w(\rho)=\rho_1+2\rho_2+\cdots+n\rho_n$.
For a vertex $v$ of $\mathcal{N}(P)$, the indicial
polinomial is
$$
\Psi_{(P;v)}(T)=\sum_{(\alpha,|\rho|)= v}
P_{\alpha,\rho}\,T^{w(\rho)}.
$$ 

{}
For $P$ given in Equation \eqref{eq:example}, some examples of initial
and indicial polynomials are: for $v_{0}=(4,6)$,
$\Psi_{(P;v_0)}(T)=-3T^{10}$, and for $v_1=(0,4)$,
$\Psi_{(P;v_1)}(T)=T^2(T-2)(4T-1)$.
As regards the sides, the one joining $(3,6)$ with $(0,4)$,
  has co-slope $\gamma_1=-3/2$ and we have
  $\Phi_{(P;\gamma_1)}(C)=2q^{-3}C^{4}-9q^{-9/2}C^{4}+4q^{-6}C^{4}-q^{-15}C^6$,
  whereas  the one joining $(0,4)$ and $(1,2)$ has co-slope
  $\gamma_2=1/2$ and $\Phi_{(P;\gamma_2)}(C)=C^4(4q^2-9q^{3/2}+2q) + q^{-3}C^2$.
\pedro{}
\subsection{A rough idea of the method}\label{sub:rough-idea}
Newton's
algorithm is recursive in the following sense : assume $s(x)=cx^{\mu}+\overline{s}(x)$ is a solution of $P=P_{0}$ with $\ord_x\overline{s}(x)>\mu$. 
Then, on one side (see Lemma \ref{le:key_lemma}):
\begin{equation}\label{eq:nec-cond}
  \Phi_{(P;\mu)}(c) = 0,
\end{equation}
and on the other, $\overline{s}(x)$ is a solution of a new equation $P_1$ derived from
$P_0$ and $cx^{\mu}$ (see Corollary \ref{co:corolario-lema-clave}). The Newton Polygon is
a graphical tool to describe the necessary condition \eqref{eq:nec-cond} on $c$ and $\mu$:
if $\mu$ is the co-slope of a side of $\mathcal{N}(P)$, then \eqref{eq:nec-cond} is a
polynomial in $c$; if $\mu$ is a co-slope of a supporting line meeting $\mathcal{N}(P)$ at a vertex $v$, then \eqref{eq:nec-cond} becomes $\Psi_{(P;v)}(q^{\mu})=0$
(in this special cases, any coefficient is valid because $\Phi(P;\mu)\equiv 0$).

Iterating the above procedure, will allow us (see Proposition~\ref{pro:equivalencia_Procedure_PivotPoint}) to prove
that $S(x)\in\mathcal{R}$ is a solution of $P$ if and only if its support is countable (so
that we can write $S(x)=\sum_{i=0}^{\infty} a_ix^{\mu_i}$) and these conditions hold:
$\mu_i\rightarrow \infty$, and if we denote $S_j(x)=\sum_{i<j}a_{i}x^{\mu_i}$,
$P_j=P(S_j(x)+Y_0, \ldots, \sigma^n(S_j(x))+Y_n)$, then for all
$j\in \mathbb{Z}_{\geq 0}$:
\begin{equation}\label{eq:increase-order}
 \Phi_{(P_j;\mu_{j})}(a_{j})=0.
\end{equation}
The geometric meaning of
\eqref{eq:increase-order} is precisely (see Figure
\ref{fig:newton-polygon-2}), that the point
$L(P_j;\mu_j)\cap \{|\rho|=0\}$ is to the left of
$\mathcal{N}(P_{j+1})\cap \{|\rho|=0\}$, whereas
$\mu_i\rightarrow\infty$ implies that these points go to infinity.
Newton's idea consists of: instead of trying to compute a complete
solution straightaway, reduce the problem to computing each
$\mu_{j}$, $a_{j}$ iteratively, using the structure of $\mathcal{N}(P_j)$ and Equation
\eqref{eq:increase-order} each time (which is Procedure 1). The fact
that all solutions of $P$ can be found with this method is essentially
Proposition~\ref{pro:equivalencia_Procedure_PivotPoint}.

\pedro{}

\subsection{Composition} \label{sec:compositio-meaning} 
For $s_0,\ldots,s_n\in \mathcal{R}$,
the expression  
$P(s_0,\ldots,s_n)$ can be given a precise meaning under certain conditions.
We consider on $\mathcal{R}$ the topology
induced by the distance $d(f,g)=\exp(-\ord(f-g))$ which is
a complete topology: {}if $(f_n)$ is a Cauchy sequence, this means that given $M>0$, there is $N_{M}$ with $\ord(f_n-f_m)>M$ for any $n,m\geq N_{M}$; hence, for any $M>0$, the truncations of $f_n$ and $f_m$ up to order $M$ coincide, for $n,m\geq N_M$. Thus, there exists a single $f\in\mathcal{R}$ (defined inductively) such that $\ord(f_n-f)>M$ for $n\geq N_M$. This $f$ is the (unique limit) of the Cauchy sequence.{}

If $P$ is a polynomial, $P(s_0,\ldots,s_n)$ is  well-defined
because $\mathbb{C}((x^{\mathbb{R}}))$ is a ring. Otherwise,
we impose   $\ord(s_i)>0$, for
all $i$. Let $\mu=\min_{0\leq i\leq n}\{\ord(s_i)\}$. 
For $M\in \mathbb{N}$, consider the polynomial 
$P_{\leq M}=\sum_{|\rho|\leq M} P_\rho(x)\,Y^{\rho}$.  
The sequence
$P_{\leq M}(s_0,\ldots,s_n)$, $M\in 
\mathbb{N}$, is a Cauchy sequence
because the order of
$P_\rho(x)s_0^{\rho_0}\cdots s_n^{\rho_n}$ is greater than or equal to
$\mu\,|\rho|$. 
Its limit is precisely $P(s_0,\ldots,s_n)$. 
Notice that if  $P\in
\mathcal{R}_\Gamma[[Y]]$ and all $s_i\in
\mathcal{R}_\Gamma$, then 
$P(s_0,\ldots,s_n)\in \mathcal{R}_\Gamma$.

Given $s_0,\ldots,s_n$ as above, we define the series
\begin{equation}
  \label{eq:def-translacion-series}
  P(s_0+Y_0,\ldots,s_n+Y_n):=
  \sum_{\rho\in \mathbb{N}^{n+1}} \frac{1}{\rho!}
  \frac{\partial^{|\rho|}P}
  {\partial Y^{\rho}}
  (s_0,\ldots,s_n)\, \, Y^{\rho},
\end{equation}
where $\rho!=\rho_0!\cdots \rho_n!$ and
$\frac{\partial^{|\rho|}P}{\partial Y^{\rho}}= 
\frac{\partial^{|\rho|}P}
{\partial Y_0^{\rho_0}\,{}\partial{}Y_1^{\rho_1}\dots\partial Y_{n}^{\rho_n} }
$.
For generalized power series $\bar{s}_0,\ldots,\bar{s}_n$
with
positive order
it is 
straightforward to prove that the evaluation of the 
right hand side of~(\ref{eq:def-translacion-series})
at $\bar{s}_0,\ldots,\bar{s_n}$
is $P(s_0+\bar{s}_0,\ldots,s_n+\bar{s}_n)$.

If $y\in \mathbb{C}((x^{\mathbb{R}}))$ has $\ord(y)>\mu_{-1}(P)$, then
$P(y,\sigma(y),\ldots,\sigma^{n}(y))$ is  well defined
because $\ord(\sigma^{k}(y))=\ord(y)$. We also remark that if
$y\in \mathcal{R}_\Gamma$, then $\sigma^{k}(y)\in
\mathcal{R}_\Gamma$. The following notations will be used in the rest
of the paper:  
\begin{align}\nonumber
  P[y]&=P(y,\sigma(y),\ldots,\sigma^{n}(y)),
  \\\label{eq:def_translacion}
  P[y+Y]&=P(y+Y_0,\sigma(y)+Y_1,\ldots,\sigma^{n}(y)+Y_n).
\end{align}
We are also going to make use of the little-o notation: $o(x^{\mu})$
will mean a
generalized formal power series with order greater than $\mu$ or the
zero series if $\mu=\infty$.  
The following is essentially what motivates the Newton polygon construction:
\begin{lemma}\label{le:key_lemma}
  Let $y=c\,x^{\mu}+o(x^{\mu})\in\mathbb{C}((x^{\mathbb{R}}))$,
  and $\mu>\mu_{-1}(P)$.  
  Let $(\nu,0)$ be the intersection point of
  $L(P;\mu)$ with the $OX$-axis. 
  Then
  \begin{displaymath}
    P[y]=\Phi_{(P;\mu)}(c)\,x^{\nu}+o(x^{\nu}),
  \end{displaymath}
  In particular, if $y$ is a solution of the
  $q$-difference equation~(\ref{eq:q-difference-equation-intro}) then 
  \begin{displaymath}
    \Phi_{(P;\mu)}(c)=0.
  \end{displaymath}
\end{lemma}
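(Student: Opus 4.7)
The plan is to unwind the definition of $P[y]$, compute the leading term contributed by each monomial $P_{\alpha,\rho}\,x^{\alpha}\,Y^{\rho}$ after substituting $Y_k = \sigma^k(y)$, and then invoke the geometric meaning of the supporting line $L(P;\mu)$ to identify which contributions land on the order $\nu$.

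First I would record the basic fact that $\sigma^k(c\,x^{\mu}+o(x^{\mu})) = q^{k\mu}\,c\,x^{\mu}+o(x^{\mu})$, since $\sigma$ multiplies the coefficient of $x^{\gamma}$ by $q^{\gamma}$ and does not alter supports. Multiplying, for any $\rho = (\rho_0,\ldots,\rho_n) \in \mathbb{N}^{n+1}$,
\begin{displaymath}
\prod_{k=0}^{n}\bigl(\sigma^{k}(y)\bigr)^{\rho_k}
= c^{|\rho|}\,q^{\mu\,w(\rho)}\,x^{\mu|\rho|} + o(x^{\mu|\rho|}),
\end{displaymath}
with $w(\rho)=\rho_1+2\rho_2+\cdots+n\rho_n$ as in the statement. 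Multiplying further by $P_\rho(x)=\sum_{\alpha\in\Gamma_\rho} P_{\alpha,\rho}\,x^{\alpha}$, each point $(\alpha,|\rho|)\in\mathcal{C}(P)$ contributes the term $P_{\alpha,\rho}\,c^{|\rho|}\,q^{\mu w(\rho)}\,x^{\alpha+\mu|\rho|}$, plus summands of strictly greater order.

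Next I would use the geometry of $L(P;\mu)$. Since this line has co-slope $\mu$ and meets the $OX$-axis at $(\nu,0)$, its equation reads $\alpha+\mu\,j=\nu$. The defining property of a supporting line gives $\alpha+\mu|\rho|\geq \nu$ for every $(\alpha,|\rho|)\in\mathcal{C}(P)$, with equality precisely when $(\alpha,|\rho|)\in L(P;\mu)$. Summing the contributions, the coefficient of $x^{\nu}$ in $P[y]$ is therefore
\begin{displaymath}
\sum_{(\alpha,|\rho|)\in L(P;\mu)} P_{\alpha,\rho}\,q^{\mu\,w(\rho)}\,c^{|\rho|} \;=\; \Phi_{(P;\mu)}(c),
\end{displaymath}
and everything else is $o(x^{\nu})$.

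The step I expect to require the most care is justifying that the infinite sum indexed by $\rho$ really produces a single generalized power series whose order $\nu$ coefficient equals the finite sum above. If $P$ is a polynomial, there is nothing to say. Otherwise $\mu_{-1}(P)=0$, so the hypothesis forces $\mu>0$ and in particular $\ord(y)>0$, so $P[y]$ is defined by the Cauchy-sequence procedure of Subsection~\ref{sec:compositio-meaning}. Because $\mu>0$, the quantity $\alpha+\mu|\rho|$ tends to infinity as $|\rho|\to\infty$ uniformly in $\alpha\in\Gamma_\rho$, and by Lemma~\ref{le:numero_finito_lados} only finitely many sides of $\mathcal{N}(P)$ have co-slope $\geq \mu$; consequently only finitely many points of $\mathcal{C}(P)$ satisfy $\alpha+\mu|\rho|\leq\nu$, so the coefficient of $x^{\nu}$ is an honest finite sum and the remainder has order strictly greater than $\nu$. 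The ``in particular'' clause then follows immediately: if $y$ solves~(\ref{eq:q-difference-equation-intro}) then $P[y]=0$ forces its coefficient at $x^{\nu}$ to vanish, i.e.\ $\Phi_{(P;\mu)}(c)=0$.
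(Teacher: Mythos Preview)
Your argument is correct and follows essentially the same approach as the paper's proof: both compute the contribution of each monomial $P_{\alpha,\rho}\,x^{\alpha}Y^{\rho}$ after substituting $\sigma^{k}(y)=q^{k\mu}c\,x^{\mu}+o(x^{\mu})$, then invoke the supporting-line inequality $\alpha+\mu|\rho|\geq\nu$ to isolate the $x^{\nu}$ coefficient. The only cosmetic difference is the handling of the infinite sum: the paper truncates to $P_{\leq M}$ with $M\mu>\nu$ and observes this does not affect the truncation at order~$\nu$, whereas you argue directly that only finitely many $(\alpha,\rho)$ can contribute (your detour through Lemma~\ref{le:numero_finito_lados} is unnecessary---once $\mu>0$, the bound $|\rho|\leq\nu/\mu$ is immediate from $\alpha\geq 0$---but it is not wrong).
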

\begin{proof}
  {}If $P$ is a polynomial, let $M$ be its total
  degree; otherwise, $\mu>\mu_{-1}(P)=0$  and we set $M$ as any integer $M$ such
that $M\mu> \nu$, say $M=\lfloor \nu/\mu \rfloor + 1$, where $\lfloor.\rfloor$ denotes the integral part.
{}
The truncation of $P[y]$ up to
  order $\nu$   is equal
  to that of $P_{\leq M}[y]$ and also
  $\Phi_{(P;\mu)}(C)=\Phi_{(P_{\leq M};\mu)}(C)$.

  Write $\alpha_\rho=\ord\,P_\rho$ for any multiindex $\rho$.
  {}
Recall that $L(P;\mu)=\{(\alpha,b)\mid   \alpha+\mu\,b=\nu\}$ is a
supporting line of $\mathcal{C}(P)$: this implies that for any $P_{\rho}\neq 0$, the point
$(\alpha_\rho,|\rho|)$ belongs to the closed right half-plane defined
by $L(P;\mu)$, from which follows that $\nu$ is the
  minimum of $\alpha_\rho+ \mu\,|\rho|$, for $\rho\in
  \mathbb{N}^{n+1}$.{}
  The following chain of equalities proves the result

  \begin{eqnarray*}
    &&P_{\leq M}[c\,x^{\mu}+o(x^{\mu})]=\\
    &&\sum_{|\rho|\leq M} 
    P_{\rho}(x)\,\, 
    (c\,x^{\mu}+o(x^{\mu}))^{\rho_0}
    (q^\mu c\,x^{\mu}+o(x^{\mu}))^{\rho_1}\cdots 
    (q^{n\mu} c\,x^{\mu}+o(x^{\mu}))^{\rho_n}=\\
    &&
    \sum_{|\rho|\leq M} 
    \left\{P_{\alpha_\rho,\rho} \, x^{\alpha_\rho}+ o(x^{\alpha_\rho})\right\}
    \left\{ c^{|\rho|} \, q^{\mu w(\rho)} \, x^{\mu|\rho|}+ o( x^{\mu|\rho|})\right\}
    =\\
    &&\sum_{|\rho|\leq M} 
    \left\{ P_{\alpha_\rho,\rho} \,
      c^{|\rho|} \, q^{\mu w(\rho)} \, x^{\alpha_\rho+\mu|\rho|}+ 
      o(x^{\alpha_\rho+\mu|\rho|})\right\}
    =\\
    &&\left\{
      \sum_{\alpha_\rho +\mu\,|\rho|=\nu} 
      P_{\alpha_\rho,\rho}\,\,
      c^{|\rho|}\,q^{ {}\mu{}\, w(\rho)} \right\}
    x^{\nu}+o(x^{\nu})=\Phi_{(P;\mu)}(c)+o(x^{\nu}).
  \end{eqnarray*}%
  {}where the last equality holds because, again
  $L(P;\mu)=\left\{ \alpha + \mu b = \nu \right\}$.{}
\end{proof}


  Let $y\in\mathbb{C}((x^{\mathbb{R}}))$ be a generalized power series
  and
  $S$ be its support. If $S$ is finite, denote by $\omega(y)$ the cardinal
  of $S$, otherwise $\omega(y)=\infty$. Consider the 
  sequence $\mu_i\in S$ defined inductively as follows: 
 $\mu_0$ is the minimum of $S$ and 
for $0\leq i<\omega(y)$, $\mu_{i+1}$ is the
  minimum of $S\setminus\{\mu_0,\mu_1,\ldots,\mu_{i}\}$. Let
  $c_i\in \mathbb{C}$ be the coefficient of $x^{\mu_i}$ in $y$.  
\begin{definition}We shall call 
   \emph{the first $\omega$ terms} of $y$  
  to the generalized power series $\sum_{0\leq
    i<\omega(y)}c_i\,x^{\mu_i}$.
  \end{definition}
Notice that if the support of $y$ is finite or has no accumulation points
then $y$ coincides with its first $\omega$ terms.

\begin{corollary}\label{co:corolario-lema-clave}
  Let $y$ be a solution of the $q$-difference
  equation~(\ref{eq:q-difference-equation-intro}) and let
  $\sum_{i}c_i\,x^{\mu_i}$ be the first $\omega$ terms of
  $y$. Let $P_i$ be the series defined as:
  \begin{displaymath}
    P_0:=P,\quad\text{and}\quad
    P_{i+1}:=P_i[c_i\,x^{\mu_i}+Y], 
    \quad 0\leq i<\omega(y).
  \end{displaymath}
  Then, for  all $0\leq i<\omega(y)$, one has
  \begin{displaymath}
    \Phi_{(P_{i};\mu_i)}(c_i)=0,\quad
    \text{and}\quad\mu_{i-1}<\mu_{i},
  \end{displaymath}
  where we denote $\mu_{-1}=\mu_{-1}(P)$.
\end{corollary}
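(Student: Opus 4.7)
The plan is to proceed by induction on $i$, using the observation that the translation $P\mapsto P[c\,x^{\mu}+Y]$ converts a solution $y$ of $P[\cdot]=0$ into the shifted solution $y-c\,x^{\mu}$ of the new equation. More precisely, definition~(\ref{eq:def_translacion}) yields the identity
$$
P_{i+1}[z] \;=\; P_i\bigl[c_i\,x^{\mu_i}+z\bigr]
$$
for every $z$ for which the right hand side is defined, simply because $\sigma$ is a ring endomorphism commuting with the substitution of $Y_k$ by $c_i q^{k\mu_i}x^{\mu_i}+\sigma^k(z)$. Iterating from $P_0=P$ and $y$, the residual series $z_i:=y-\sum_{j<i}c_j\,x^{\mu_j}$ is thus a solution of $P_i[\cdot]=0$, and by the very construction of the first $\omega$ terms it has leading term $c_i\,x^{\mu_i}$, i.e., $z_i = c_i\,x^{\mu_i}+o(x^{\mu_i})$.

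Granted this, the vanishing $\Phi_{(P_i;\mu_i)}(c_i)=0$ is immediate from Lemma~\ref{le:key_lemma} applied to $P_i$ and $z_i$. The strict inequality $\mu_{i-1}<\mu_i$ for $i\geq 1$ is built into the inductive definition of the first $\omega$ terms, since $\mu_i$ is the minimum of $S\setminus\{\mu_0,\ldots,\mu_{i-1}\}$ and $S$ is well-ordered. For $i=0$ the inequality $\mu_{-1}(P)<\mu_0$ is forced by the standing hypothesis that $P[y]$ is defined: either $P$ is a polynomial and $\mu_{-1}(P)=-\infty$, or the composition subsection requires $\ord(y)>0=\mu_{-1}(P)$.

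The main obstacle is the book-keeping needed to verify, at each step, the hypothesis $\mu_i>\mu_{-1}(P_i)$ under which Lemma~\ref{le:key_lemma} and the composition $P_{i+1}=P_i[c_i\,x^{\mu_i}+Y]$ are applicable. If $P$ is a polynomial in $Y_0,\ldots,Y_n$, inspection of formula~(\ref{eq:def-translacion-series}) shows that every $P_i$ is also a polynomial (of degree no larger than that of $P$), so $\mu_{-1}(P_i)=-\infty$ and there is nothing to check. Otherwise $\mu_{-1}(P)=0$, and the translation $P_{i-1}\mapsto P_i$ is invertible (its inverse being $P_i[-c_{i-1}\,x^{\mu_{i-1}}+Y]$, well-defined thanks to $\mu_{i-1}>0$), so $P_i$ is also non-polynomial and $\mu_{-1}(P_i)=0$; combined with the already-established chain $\mu_i>\mu_{i-1}>\cdots>\mu_0>0$, this gives $\mu_i>\mu_{-1}(P_i)$ and closes the induction.
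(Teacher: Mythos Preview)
Your proof is correct and follows the same approach as the paper: define the residuals $z_i=y-\sum_{j<i}c_j\,x^{\mu_j}$ (the paper's $\bar{y}_i$), observe that $P_i[z_i]=0$ with leading term $c_i\,x^{\mu_i}$, and apply Lemma~\ref{le:key_lemma}. Your version is more careful than the paper's two-line sketch in that you explicitly verify the hypothesis $\mu_i>\mu_{-1}(P_i)$ needed for the lemma and for the composition, which the paper leaves implicit.
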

\begin{proof}
  Let
  $\bar{y}_k=y-\sum_{i=0}^{k-1}c_i\,x^{\mu_i}$, then
  $P_k[ \bar{y}_k  ]=0$ and the first term of $\bar{y}_k$ is~$c_k\,x^{\mu_k}$.
\end{proof}
\pedro{} By way of example, consider, for $P$ given by
\eqref{eq:example}, the transformation with $\mu=2$ and $c=1$, which
gives $P_1=P[x^2+Y]$ having $33$ terms. The Newton polygon of $P_1$
(and its comparison to that of $P$) is given in Figure~\ref{fig:newton-polygon-2}. Observe how (this will be proved later as
Lemma \ref{le:NP_behaviour_under_translation}) the Newton Polygons
$\mathcal{N}(P)$ and $\mathcal{N}(P_1)$ coincide at and above the
vertex $v=(1,2)$, which is the topmost vertex of
$L(P;2)\cap \mathcal{N}(P)$. Underneath that vertex $v$, the point
$L(P;2)\cap \{|\rho|=0\} = (5,0)$ is to the left of
$\mathcal{N}(P_1)\cap \{|\rho|=0\}=(8,0)$.

At the same time, under $v$, the polygon $\mathcal{N}(P_1)$ has only
sides with co-slope greater than or equal to $2$ (in the example, just
one with co-slope $7/2$). As $\mu_1=2$, only co-slopes $\mu_j>2$ are chosen afterwards (see Section 5 and Figure \ref{picture1} for the complete example).

\begin{figure}[h!]
  \centering
  \begin{tikzpicture}[scale=0.6]
    \draw[step = 0.5,gray!30!white, thin] (0,0) grid (16,6);
    \foreach \i in {1,2,...,15}
    {\draw (\i,0) node[anchor=north] {$\i$};}
    \foreach \i in {1,2,...,6}
    {\draw (0,\i) node[anchor=east] {$\i$};}
    \draw(0,6) -- (0,0) -- (16,0);
    \draw[fill=black] (0,4) circle(3pt);
    \draw[fill=black](1,2) circle(3pt);
    \draw[fill=black] (8,0) circle(3pt);
    \draw[fill=black] (3,6) circle(3pt);
    \draw[line width=1pt](16,6) -- (3,6) -- (0,4) -- (1,2) -- (8,0) -- (16,0);
    \draw[fill=gray!10!white, opacity=0.3]
    (16,6) -- (3,6) -- (0,4) -- (1,2) -- (8,0) -- (16,0);
    \draw[line width=1pt,dashed](3.1,6) -- (0.1,4);
    \draw[line width=1pt,dashed] (0.05,4)-- (1.05,2);
    \draw[dashed,line width=1pt](1,2) -- (5,0);
    \draw[dashed,line width=1pt] (5,0.05) -- (8,.05);
    \draw[fill=none] (3,1) circle(3pt);
    \draw[fill=none] (5,0) circle(3pt);
    \foreach \x/\y in {2/3,4/2,6/1}
    {\draw[fill=gray,gray](\x,\y)circle(3pt);};
    \foreach \x/\y in {5/5,7/4,9/3,11/2,13/1,15/0}
    {\draw[fill=gray,gray](\x,\y)circle(3pt);};
    \draw[dashed] (5.5,-0.25) -- (-1,3) node[anchor=east]{$L(P;2)$};
  \end{tikzpicture}
  \caption{Cloud and Newton polygon $\mathcal{N}(P_1)$ of $P_1=P[x^2+Y]$ where $P$ is defined in \eqref{eq:example}. In dashed lines, $\mathcal{N}(P)$. Observe how both \emph{polygons} coincide at and above $(1,2)$, the topmost vertex of $L(P;2)\cap \mathcal{N}(P)$.}
  \label{fig:newton-polygon-2}
\end{figure}
\pedro{}

  Let $P\in \mathcal{R}_\Gamma[[Y]]$
  and let $\sum_{i=0}^{\infty}c_i\,x^{\mu_i}$ be a series with
  $\mu_{-1}(P)<\mu_i< \mu_{i+1}$, for all $0\leq i<\infty$
  (We do not impose that  $c_i\neq 0$, but the sequence 
  $(\mu_i)_{i\in \mathbb{N}}$ is strictly increasing).
  Consider the series $P_0:=P$ and $P_{i+1}:=P_i[c_ix^{\mu_i}+Y]$. 
\begin{definition}
We say that
  $\sum_{i=0}^{\infty}c_i\,x^{\mu_i}$ satisfies the 
  necessary initial conditions for $P$, in short $\NIC(P)$, if
  $\Phi_{(P_i;\mu_i)}(c_i)=0$, for all $i\geq 0$.  
\end{definition}
The above Corollary states that the first $\omega$ terms of a solution of
$P[y]=0$ satisfy $\NIC(P)$. In this section and the next one we shall
prove in Proposition~\ref{pro:NIC->solucion} the reciprocal statement 
for $P\in \mathcal{R}_{\Gamma}[[Y]]$: if
$\sum_{i=0}^{\infty}c_ix^{\mu_i}$ satisfies $\NIC(P)$, 
then  $\lim_{i\to\infty}\mu_i=\infty$ and
$\sum_{i=0}^{\infty}c_ix^{\mu_i}$ is an actual solution of the 
$q$-difference equation $P[y]=0$. This implies in particular that
solutions of $P[y]=0$ coincide with their first $\omega$ terms.

A method for computing all the series satisfying $\NIC(P)$ with 
$c_i\neq 0$, for all $i$, 
is the following one: 
\begin{proc}[Computation of a power series satisfying $\NIC(P)$]\mbox{ }\label{Procedure-1}\\
  Set $P_0:=P$ and $\mu_{-1}:=\mu_{-1}(P)$.
  \newline
  For $i=0,1,2, \ldots$ do either (a.1) or (a.2) and (b), where:
  \newline\indent
  (a.1). If $y=0$ is a solution of $P_i[y]=0$, then
  \texttt{return} $\sum_{k=0}^{i-1}c_kx^{\mu_k}$.
  \newline\indent
  (a.2). Choose $\mu_i>\mu_{i-1}$, and $0\neq c_i\in
  \mathbb{C}$ satisfying $\Phi_{(P_i,\mu_i)}(c_i)=0$.
  \newline\indent
  If neither (a.1) nor (a.2) can be performed
   then \texttt{return} \texttt{fail}.
  \newline\indent
  (b). Set $P_{i+1}(Y):=P_i[c_i\,x^{\mu_i}+Y]$.
\end{proc}

If  \texttt{fail} is returned at step $k$ of the above
Procedure, this means 
that there are no a solutions of $P[y]=0$ having
$\sum_{i=0}^{k-1}c_ix^{\mu_i}$ as its first $k$ terms. To prove this,
assume that $z$ is a solution having $\sum_{i=0}^{k-1}c_ix^{\mu_i}$
as its first $k$ terms. Either $z=\sum_{i=0}^{k-1}c_ix^{\mu_i}$, in
which case $y=0$ would be a solution of $P_k[y]=0$ and (a.1) would
have been performed, or
$z-\sum_{i=0}^{k-1}c_ix^{\mu_i}$ would have a first term of the form 
$c_kx^{\mu_k}$ so that
(a.2) could have been performed.

In order to carry out (a.2) in the above Procedure, one has to
deal with the following formula with quantifiers
\begin{equation}\label{eq:formulaQuantifiers}
  \exists \mu>\mu',\,
  \exists c\in \mathbb{C}, c\neq 0,
  \quad \Phi_{(P;\mu)}(c)=0.
\end{equation}
The Newton Polygon provides a way to eliminate the quantifiers. 
Fix $\mu'>\mu_{-1}(P)$; by
Lemma~\ref{le:numero_finito_lados}, $\mathcal{N}(P)$ has only a finite
number of sides $L_1,L_2,\ldots,L_t$ with co-slopes greater than $\mu'$. 
Let
$\gamma_1<\gamma_2<\ldots<\gamma_t$ be their respective co-slopes and
denote by $v_{i-1}$ and $v_i$ the
endpoints of $L_i$. Take $\mu>\mu'$. Either $\mu=\gamma_j$
for some $1\leq j\leq t$,
or  $\gamma_j<\mu<\gamma_{j+1}$
for some $0\leq j\leq t$ (writing
$\gamma_0=\mu'$ and $\gamma_{t+1}=\infty$). 
If $\mu=\gamma_j$, then $L(P;\mu)\cap
\mathcal{N}(P)=L_j$ and $\Phi_{(P;\mu)}(C)$ depends only on the
coefficients $P_{\alpha,\rho}$ of $P$ with $(\alpha,|\rho|)\in
L_j$. Otherwise,  $\gamma_j<\mu<\gamma_{j+1}$ for some $j$ and $L(P;\mu)\cap
\mathcal{N}(P)$ is just the vertex $v_j=(a,b)$, which implies that
{}
\begin{equation*}
\Phi_{(P;\mu)}(C)= C^{b}\cdot  \Psi_{(P;v_j)}(q^{\mu}).
\end{equation*}
From this equality follows that in order for $\Phi_{(P;\mu)}(c)$ to be $0$ for some $c\neq 0$, the co-slope $\mu$ must satisfy $\Psi_{P;v_j}(q^{\mu})=0$.
In other words:{} there exists $c\neq 0$ and $\mu$ with
$\gamma_j<\mu<\gamma_{j+1}$ such that $\Phi_{(P;\mu)}(c)=0$ if and 
only if there exists $\mu$, {}satisfying both $\gamma_j<\mu<\gamma_{j+1}$ and
$\Psi_{(P;v_j)}(q^{\mu})=0${}. This proves that
equation~(\ref{eq:formulaQuantifiers}) 
is equivalent to the quantifier-free formula obtained by the
disjunction of the following formul{\ae}:
\begin{align}\label{eq:nec_cond_lados}
  \Phi_{(P;\gamma_j)}(c)&=0, \quad &1\leq j\leq t,\\
  \label{eq:nec_cond_vertices}
  \Psi_{(P;v_j)}(T)&=0,\,\mu=\log T/\log
  q,\,\gamma_j<\mu<\gamma_{j+1},
  \quad &0\leq j\leq t.
\end{align}

\subsection{The pivot point}
\pedro{} We prove in this subsection that
if $Q_{0}$ is the topmost vertex of $L(P;\mu_0)\cap \mathcal{N}(P)$ and
$P_1=P_{1}[y]$ is the first substitution, then $Q_{0}$ is also the topmost
vertex of $L(P_1;\mu_0)\cap \mathcal{N}(P_1)$, as exemplified in Figure \ref{fig:newton-polygon-2}. This allows one to give
a \emph{descent} argument which guarantees that, from some index $j_0$ on,
the point $Q_j$ (the topmost in
$L(P_{j};\mu_{j})\cap \mathcal{N}(P_{j})$) is equal to $Q_{j_0}$ for
$j\geq j_0$ (i.e. $Q_{j}$ remains the same for $j\geq j_0$). This fixed vertex will be called the \emph{pivot point}, as
for $j>j_0$ on, each supporting line $L(P_j;\mu_{j})$ ``hinges'' around it
 when the substitution
$P_{j}\rightarrow P_{j+1}$ is carried out. The existence of this pivot
point (and what we call \emph{relative pivot points} in subsection
\ref{sub:relative}) guarantees the finiteness properties of Theorems 1
and 2.

In fact, we prove later that if $s(x)$ is a solution of $P$, then
either the pivot point has ordinate equal to $1$ or we
can derive a new equation from $P$ which also has $s(x)$ as a
solution and whose pivot point with respect to $s(x)$ has ordinate
equal to $1$. This simplifies our arguments considerably because when this happens, (\ref{eq:increase-order}) is linear in $a_j$.  \pedro{}

For $P\in \mathcal{R}_\Gamma[[Y]]$ and
$\mu>\mu_{-1}(P)$, we shall denote by $Q(P;\mu)$ the point with highest
ordinate in $L(P;\mu)\cap \mathcal{N}(P)$. For
$\bar{P}=P[c\,x^{\mu}+Y]$ (as in
equation~(\ref{eq:def_translacion})), the following Lemma describes the
Newton Polygon of $\bar{P}$:

\begin{lemma}\label{le:NP_behaviour_under_translation}
  Let $h$ be the ordinate of $Q(P;\mu)$ and consider the
  half-planes
  $h^{+}=\{(a,b)\in \mathbb{R}^2\mid b\geq h\}$,
  $h^{-}=\{(a,b)\in \mathbb{R}^2\mid b\leq h\}$. If 
  $L(P;\mu)^{+}$ is the closed right
  half plane defined by $L(P;\mu)$ and $(\nu,0)$ is the intersection of
  $L(P;\mu)$ with the $OX$-axis, then
  \begin{enumerate}
  \item $\mathcal{N}(\bar{P})\cap h^{+}=\mathcal{N}(P)\cap h^{+}$,
    in particular $Q(P;\mu)\in \mathcal{N}(\bar{P})$. Moreover, for
    any $\alpha$ and $\rho$ with
    $(\alpha,|\rho|)=Q(P;\mu)$, the coefficients 
    $P_{\alpha,\rho}$ and $\bar{P}_{\alpha,\rho}$ are equal.
  \item $\mathcal{N}(\bar{P})\cap h^{-}\subseteq L(P;\mu)^{+}\cap h^{-}$,
  \item The point $(\nu,0)\in \mathcal{N}(\bar{P})$ if and only if
    $\Phi_{(P;\mu)}(c)\neq 0$. 
  \end{enumerate}
\end{lemma}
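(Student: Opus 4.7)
The plan is to work from the explicit formula
\[
\bar P_\rho(x) \;=\; \sum_{\tau\geq\rho} \binom{\tau}{\rho}\, c^{|\tau|-|\rho|}\, q^{\mu w(\tau-\rho)}\, x^{\mu(|\tau|-|\rho|)}\, P_\tau(x)
\]
that one reads off from (\ref{eq:def-translacion-series}). Geometrically each monomial $P_{\alpha,\tau}\,x^\alpha Y^\tau$ of $P$ contributes to $\bar P$ the points $(\alpha+\mu(|\tau|-|\rho|),\,|\rho|)$ for $\rho\leq\tau$, i.e.\ the projections of $(\alpha,|\tau|)$ along the direction $(\mu,-1)$. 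Since projection preserves $\alpha+\mu\cdot(\mathrm{ordinate})$ and every point of $\mathcal{C}(P)$ satisfies $\alpha+\mu|\tau|\geq\nu$, every point of $\bar{\mathcal{C}}(\bar P)$ lies in $L(P;\mu)^+$; by convexity $\mathcal{N}(\bar P)\subseteq L(P;\mu)^+$, and intersecting with $h^-$ proves (2).

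Next I would establish that the coefficient at the pivot is preserved: with $(a,h)=Q(P;\mu)$ and $|\rho|=h$, the only term of the sum contributing to $\bar P_{a,\rho}$ is $\tau=\rho$, since a contribution from $\tau>\rho$ would need some $(\alpha',|\tau|)\in\mathcal{C}(P)$ with $\alpha'+\mu|\tau|=a+\mu h=\nu$ and $|\tau|>h$, impossible because $Q(P;\mu)$ is the highest point of $L(P;\mu)\cap\mathcal{N}(P)$. Thus $\bar P_{a,\rho}=P_{a,\rho}\neq 0$, which already proves the second assertion of (1) and places $(a,h)$ in $\mathcal{C}(\bar P)$.

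For the $\subseteq$ half of (1), take any $(\alpha',j)\in\mathcal{C}(\bar P)$ with $j\geq h$, arising as the projection of some $(\alpha,|\tau|)\in\mathcal{C}(P)$ with $|\tau|\geq j$. If $|\tau|=j$ then $(\alpha',j)\in\mathcal{C}(P)$. If $|\tau|>j\geq h$, I would replace $\alpha$ by the leftmost abscissa of $\mathcal{N}(P)$ at height $|\tau|$ and exploit the fact that the edges of the left boundary of $\mathcal{N}(P)$ strictly above the pivot have co-slope strictly less than $\mu$; this is a brief case analysis depending on whether $\mu$ is an edge co-slope or strictly between two. Consequently, as $j$ decreases from $|\tau|$ down to $h$, the leftmost abscissa of $\mathcal{N}(P)$ at height $j$ grows at rate strictly less than $\mu$, which forces the projected point to remain inside $\mathcal{N}(P)$. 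Taking convex hulls yields $\mathcal{N}(\bar P)\cap h^+\subseteq\mathcal{N}(P)\cap h^+$.

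The reverse inclusion is handled by invertibility: $\bar P[(-c)x^\mu+Y]=P$, and combining (2) with the pivot-preservation and the $\subseteq$ just proved shows that $L(\bar P;\mu)=L(P;\mu)$ and $Q(\bar P;\mu)=(a,h)$, so the $\subseteq$ argument applied to the triple $(\bar P,-c,\mu)$ yields $\mathcal{N}(P)\cap h^+\subseteq\mathcal{N}(\bar P)\cap h^+$. For (3), property (2) forces $\bar{\mathcal{C}}(\bar P)\cap\{b=0\}$ to lie in $[\nu,\infty)$, so $(\nu,0)\in\mathcal{N}(\bar P)$ iff $\bar P_{\nu,0}\neq 0$; but $\bar P_0(x)=P[cx^\mu]$, and Lemma~\ref{le:key_lemma} applied with $y=cx^\mu$ identifies this coefficient as $\Phi_{(P;\mu)}(c)$. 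I expect the main obstacle to be the reverse inclusion in (1): direct vertex-by-vertex arguments can be foiled by cancellations in $\bar P$'s coefficients, and the invertibility/symmetry trick is what sidesteps this cleanly.
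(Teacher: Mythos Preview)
Your argument is correct and follows the same geometric idea as the paper's proof: expand each monomial $P_\rho(x)Y^\rho$ under the substitution and observe that the new points it produces lie along the direction $(\mu,-1)$ below the original point and inside $L(P;\mu)^+$. The paper does this in one line, writing $M_\rho[cx^\mu+Y]=M_\rho+V$ with $\mathcal{C}(V)\subseteq\{b<|\rho|\}\cap L(M_\rho;\mu)^+$, and then asserts (1) in a single sentence; your treatment is considerably more explicit.

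Two remarks. First, there is a small gap in your ``taking convex hulls'' step for the inclusion $\mathcal{N}(\bar P)\cap h^+\subseteq\mathcal{N}(P)\cap h^+$: knowing $\mathcal{C}(\bar P)\cap h^+\subseteq\mathcal{N}(P)$ is not by itself enough, because points of $\mathcal{C}(\bar P)$ \emph{below} height $h$ can in principle push the convex hull above $h$ outside $\mathcal{N}(P)$. This is fixed immediately by (2): since $\bar{\mathcal{C}}(\bar P)\cap h^-\subseteq L(P;\mu)^+\cap h^-$ and since $(\mathcal{N}(P)\cap h^+)\cup(L(P;\mu)^+\cap h^-)$ is convex (the two pieces meet along $\{b=h\}$ exactly at the ray from $(a,h)$ rightward, and the edge co-slopes are compatible), the full convex hull of $\bar{\mathcal{C}}(\bar P)$ stays inside this union. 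You should make this explicit.

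Second, your invertibility trick for the reverse inclusion is a genuine addition: the paper's one-line argument for (1) implicitly claims that the \emph{coefficients} at points of $\mathcal{N}(P)\cap h^+$ are unchanged (which, read literally for interior points, is false), whereas you sidestep any such coefficient-preservation away from the pivot by applying the already-proved $\subseteq$ to the inverse transformation $P=\bar P[(-c)x^\mu+Y]$ once you know $Q(\bar P;\mu)=Q(P;\mu)$. This is cleaner than trying to salvage the paper's sentence.
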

\begin{proof}
  Write $M_\rho(Y)=P_\rho(x)Y^{\rho}$ 
  and $\alpha_\rho=\ord P_\rho(x)$. It is
  straightforward to show that 
  $M_\rho[cx^{\mu}+Y]=
  M_\rho(Y)+V(Y)$ for some $V(Y)$,
  whose cloud 
  of points is contained in the set 
  $A_\rho=\{(a,b)\mid b<|\rho|\} \cap L(M_\rho;\mu)^{+}$. 
  This proves  part (2). If $Q=(\alpha,\rho)$ belongs to $\mathcal{N}(P)\cap
  h^{+}$, then there are no points $Q'=(\alpha',\rho')\in \mathcal{N}(P)$,
  except $Q$ itself, such that
  $Q\in A_{\rho'}$. This proves part~(1). Part (3) is a consequence of
  Lemma~\ref{le:key_lemma}. 
\end{proof}

\begin{corollary}\label{co:DefinicionPivotPoint}
  Let $\bar{\mu}>\mu$. Then either
  $Q(P;\mu)=Q(\bar{P},\bar{\mu})$ or the ordinate of
  $Q(\bar{P},\bar{\mu})$ is less than the ordinate of
  $Q(P;\mu)$. If  $\Phi_{(P;\mu)}(c)\neq 0$, then the
  ordinate of $Q(\bar{P};\bar{\mu})$ is zero. 
\end{corollary}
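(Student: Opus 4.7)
Let $Q(P;\mu)=(\alpha_0,h)$ and set $\nu=\alpha_0+\mu h$, so that $L(P;\mu)=\{\alpha+\mu b=\nu\}$ and $L(P;\mu)^+=\{\alpha+\mu b\ge\nu\}$. The key observation, immediate from parts (1) and (2) of Lemma~\ref{le:NP_behaviour_under_translation} together with $\mathcal{N}(P)\subseteq L(P;\mu)^+$, is that the \emph{entire} polygon $\mathcal{N}(\bar P)$ is contained in $L(P;\mu)^+$: the upper part $\mathcal{N}(\bar P)\cap h^+$ equals $\mathcal{N}(P)\cap h^+$, which lies in $\mathcal{N}(P)\subseteq L(P;\mu)^+$, while the lower part is placed inside $L(P;\mu)^+$ directly by (2). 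The plan is to pair this geometric constraint with the minimality property defining $L(\bar P;\bar\mu)$.

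Write $Q(\bar P;\bar\mu)=(\alpha_1,h')$ and $\nu_{\bar P}=\alpha_1+\bar\mu h'$. For the first claim I argue by contradiction, assuming $h'>h$. By part~(1) we have $(\alpha_0,h)\in\mathcal{N}(\bar P)$, so minimality of $\nu_{\bar P}$ yields $\alpha_1+\bar\mu h'\le \alpha_0+\bar\mu h$, i.e.\ $\alpha_1-\alpha_0\le -\bar\mu(h'-h)$. Applying the key observation at the point $(\alpha_1,h')$ gives $\alpha_1+\mu h'\ge\nu=\alpha_0+\mu h$, i.e.\ $\alpha_1-\alpha_0\ge -\mu(h'-h)$. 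Combining the two yields $(\bar\mu-\mu)(h'-h)\le 0$, contradicting $\bar\mu>\mu$ and $h'>h$. Hence $h'\le h$; and in the boundary case $h'=h$, the same two inequalities collapse to $\alpha_1=\alpha_0$, forcing $Q(\bar P;\bar\mu)=Q(P;\mu)$.

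For the second claim, assume $\Phi_{(P;\mu)}(c)\ne 0$. By part~(3), $(\nu,0)\in\mathcal{N}(\bar P)$, hence $\nu_{\bar P}\le \nu$. If $h'>0$, the same pairing—minimality against $(\nu,0)$ and the key observation at $(\alpha_1,h')$—produces $\alpha_1+\bar\mu h'\le\nu$ and $\alpha_1+\mu h'\ge\nu$, so that $(\bar\mu-\mu)h'\le 0$, again impossible. Therefore $h'=0$.

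The substantive step is the geometric observation $\mathcal{N}(\bar P)\subseteq L(P;\mu)^+$, which packages parts (1) and (2) of the Lemma into a single statement; once this is seen, the rest reduces to a pair of one-line inequalities comparing the co-slopes $\mu$ and $\bar\mu$, and I do not anticipate any further obstacle.
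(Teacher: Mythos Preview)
Your proof is correct. Both your argument and the paper's rest on the same geometric fact extracted from the Lemma---namely that $\mathcal{N}(\bar P)\subseteq L(P;\mu)^+$ with $Q(P;\mu)\in\mathcal{N}(\bar P)$---but they proceed differently from there. The paper first draws the structural consequence $L(P;\mu)=L(\bar P;\mu)$ and hence $Q(P;\mu)=Q(\bar P;\mu)$, and then invokes the general convexity principle that on a fixed Newton polygon, increasing the co-slope can only move the contact vertex downward; for the second claim it identifies the segment from $(\nu,0)$ to $Q(\bar P;\mu)$ as the unique side of $\mathcal{N}(\bar P)$ with co-slope $\ge\mu$. You instead bypass the intermediate identification entirely and run a direct inequality comparison between the two co-slopes $\mu$ and $\bar\mu$, which is a bit more elementary (no appeal to the vertex structure of $\mathcal{N}(\bar P)$ beyond the half-plane containment) and handles the equality case $h'=h$ cleanly in the same stroke. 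The paper's route makes the Newton-polygon geometry more visible; yours is shorter and self-contained.
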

\begin{proof}
  The previous Lemma implies that $Q(P;\mu)$ is a vertex of
  $\mathcal{N}(\bar{P})$ and $L(P;\mu)=L(\bar{P};\mu)$. 
  Hence $Q(P;\mu)=Q(\bar{P};\mu)$. Since $\bar{\mu}>\mu$, 
  $Q(\bar{P};\bar{\mu})$ is a vertex with ordinate less than or equal to
  the ordinate of $Q(\bar{P};\mu)=Q(P;\mu)$. For the second
  part, assume that $\Phi_{(P;\mu)}(c)\neq 0$. By the same Lemma, the point
  $(\nu,0)\in \mathcal{N}(\bar{P})$, so that the segment whose
  endpoints are $(\nu,0)$ and $Q(\bar{P};\mu)$ is the
  only side of $\mathcal{N}(\bar{P})$ with co-slope greater than or
  equal to $\mu$, from which follows that $Q(\bar{P};\bar{\mu})=(\nu,0)$.
\end{proof}

  Let $P\in \mathcal{R}_\Gamma[[Y]]$
  and take a series $\psi(x)=\sum_{i=0}^{\infty}c_i\,x^{\mu_i}$  with
  $\mu_{-1}(P)<\mu_i< \mu_{i+1}$ for all $0\leq i<\infty$.
  (Notice that we do not impose that  $c_i\neq 0$, but the sequence 
  $(\mu_i)_{i\in \mathbb{N}}$ must be strictly increasing).
  Writing $P_0:=P$ and $P_{i+1}:=P_i[c_ix^{\mu_i}+Y]$,
  let $Q_i=Q(P_i;\mu_i)$. By the previous Corollary, the ordinate of {}$Q_{i+1}$
  is less than or equal to the ordinate of $Q_{i}${}.
  Since these are
  natural numbers, there exists $N$ such that for $i\geq
  N$, the ordinate of $Q_i$ is equal to the ordinate of $Q_N$ (it
  stabilizes).
  By the same Corollary, we know that actually $Q_N=Q_i$, for all
  $i\geq N$. This leads to the following 
  \begin{definition}\label{def:pivot_point}
  The \emph{pivot point of $P$ with respect to
  $\psi(x)$} is the point $Q$ at which the sequence $Q_i$ stabilizes and is
denoted by $Q(P;\psi(x))$. We say that it is reached at step $N$ if
$Q_N=Q(P;\psi(x))$.
  \end{definition}
  Let $Q_N=(\alpha,h)$ be the pivot point just defined. From part (1) of
  Lemma~\ref{le:NP_behaviour_under_translation} follows that
  $(P_N)_{\alpha,\rho}=(P_i)_{\alpha,\rho}$ for all $i\geq N$,
  and for all $\rho$ with
  $|\rho|=h$. In particular, the indicial polynomials
  $\Psi_{(P_i;Q_N)}(T)$ are the same for all $i\geq N$.
  We shall say that \emph{the monomial $Y^{\rho}$ (resp. the variable $Y_j$)
  appears effectively in 
  the pivot point} if $(P_N)_{\alpha,\rho}\neq 0$ (resp. for some
  $\rho$ with $\rho_j>0$).
\begin{proposition}\label{pro:equivalencia_Procedure_PivotPoint}
  Let $P$ and $\psi(x)=\sum_{i=0}^{\infty}c_i\,x^{\mu_i}$ be as above.
  The following statements are equivalent: 
  \begin{enumerate}
  \item The ordinate of the pivot point of $P$ with respect to
    $\sum_{i=0}^{\infty}c_i\,x^{\mu_i}$ is greater than or equal to $1$.
  \item The series $\sum_{i=0}^{\infty}c_i\,x^{\mu_i}$ satisfies
    $\NIC(P)$.
  \end{enumerate}
In case $\lim \mu_i=\infty$, these statements are equivalent to
\begin{enumerate}
\item[(3)]  The series $\psi(x)$ is a solution of $P[y]=0$.
\end{enumerate}
\end{proposition}
\begin{proof}
  Assume statement (1). The ordinate of $Q_{i+1}$ is non-zero and 
  by the above Corollary, $\Phi_{(P_i;\mu_i)}(c_i)=0$, which
  proves~(2). Assume now that statement (1) is false, so that the ordinate
  of the pivot point is zero. This means that there exists some $N$ such
  that $Q_N$ has ordinate zero. By definition of $Q_N$ we have that
  $L(P_N;\mu_N)\cap \mathcal{N}(P_N)$ is just the point
  $Q_N=(\alpha,0)$. Then $\Phi_{(P_N;\mu_N)}(C)$ is a non-zero constant
  (namely the coefficient of $x^{\alpha}$ in $P_N$), therefore it has no
  roots, in contradiction with  $\Phi_{(P_N;\mu_N)}(c_N)=0$. This
  proves the equivalence between (1) and (2).
  By Corollary~\ref{co:corolario-lema-clave}, (3) implies (2).

  Assume  (1) holds and that $\lim \mu_i=\infty$.  
  Write
  $\psi_k(x)=\sum_{i=0}^{k-1}c_ix^{\mu_i}$ and notice that
  $P_i=P[\psi_i(x)+Y]$, 
  in particular, $P[\psi_i(x)]=P_i[0]=(P_i)_{\underline{0}}$.
  Let $Q=(\alpha,h)$ be the pivot point of $P$ with respect to
  $\psi(x)$. 
  Since $L(P_i;\mu_i)$ contains
  the point $Q$, $\ord
  (P_i)_{\underline{0}} >\alpha+h\mu_i$ and since $h\geq 1$, the
  sequence $ \ord
  P[\psi_i(x)]$ tends to infinity and we are done.
\end{proof}
\begin{corollary}\label{co:solution->ordinatePivotPoint>1}
  Let $\sum_{i=0}^{\infty}c_i\,x^{\mu_i}$ be the
  first $\omega$-terms of a solution of $P[y]=0$.
  Then the pivot point of $P$ with
  respect to $\sum_{i=0}^{\infty}c_i\,x^{\mu_i}$ has ordinate greater
  than or equal to $1$.  
\end{corollary}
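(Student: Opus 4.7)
The plan is simply to chain together the two results just proved. By Corollary~\ref{co:corolario-lema-clave}, the first $\omega$-terms $\sum_{i=0}^\infty c_i\,x^{\mu_i}$ of any solution of $P[y]=0$ automatically satisfy the condition $\NIC(P)$, since each $\Phi_{(P_i;\mu_i)}(c_i)=0$ with strictly increasing $\mu_i$.

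Then I would invoke the equivalence (1) $\Leftrightarrow$ (2) in Lemma~\ref{le:equivalencia_Procedure_PivotPoint}: the fact that $\sum_{i=0}^\infty c_i\,x^{\mu_i}$ satisfies $\NIC(P)$ is precisely equivalent to the pivot point $Q(P;\psi(x))$ having ordinate at least one. Applying this equivalence to the series $\psi(x)=\sum_{i=0}^\infty c_i\,x^{\mu_i}$ gives the conclusion.

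There is essentially no obstacle: the corollary is a direct combination of Corollary~\ref{co:corolario-lema-clave} (solutions yield $\NIC(P)$) and Lemma~\ref{le:equivalencia_Procedure_PivotPoint} ($\NIC(P)$ is equivalent to the pivot point having ordinate $\geq 1$). The only small point worth checking is that the hypothesis $\mu_{-1}(P)<\mu_i<\mu_{i+1}$ required to invoke Lemma~\ref{le:equivalencia_Procedure_PivotPoint} holds for the first $\omega$-terms of a solution, which is immediate from the definition of the $\mu_i$ as the strictly increasing enumeration of the (well-ordered) support and from the fact that a solution must have positive order (since $\mu_0>\mu_{-1}(P)$ is needed for $P[y]$ to be defined in the nonpolynomial case).
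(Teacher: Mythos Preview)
Your proof is correct and is exactly the argument the paper intends: the corollary is stated without proof precisely because it follows immediately from combining Corollary~\ref{co:corolario-lema-clave} (the first $\omega$-terms of a solution satisfy $\NIC(P)$) with the equivalence $(1)\Leftrightarrow(2)$ of Lemma~\ref{le:equivalencia_Procedure_PivotPoint}. Your remark on the hypothesis $\mu_{-1}(P)<\mu_0$ is the only point worth noting, and you handle it correctly.
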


\subsection{Relative pivot points}\label{sub:relative}
The above construction of the pivot point can be made relative to any
of the 
variables $Y_j$, $0\leq j\leq n$, and more generally, relative to any
monomial $Y^{r}$, with $r=(r_0,r_1,\cdots,r_n)\in
\mathbb{N}^{n+1}$, as follows:

Fix $r\in \mathbb{N}^{n+1}$.
The cloud of points of $P$
relative to $Y^{r}$ is defined as the set 
$\mathcal{C}_r(P)=
\{(\alpha,|\rho|)\mid \exists
\rho,\,\text{with }P_{\alpha,\rho}\neq 0,\text{ and } r\preceq \rho\}$,
where $r\preceq \rho$ means that $r_i\leq \rho_i$, for all $0\leq
i\leq n$. 
It is obvious that
$\mathcal{C}_r(P)\subseteq \mathcal{C}(P)$. 

Assume that $\mathcal{C}_r(P)$ is not the empty set, then
we may 
define  the line $L_r(P;\mu)$ as the 
leftmost line with co-slope
$\mu$ having nonempty intersection with $\mathcal{C}_r(P)$. 
The point $Q_r(P;\mu)$ will be the one with greatest ordinate in
$L_r(P;\mu)\cap \mathcal{C}_r(P)$.


If $H$ denotes $H=\frac{\partial^{|r|} P}{\partial Y^{r}}$, the cloud
$\mathcal{C}_r(P)$ is not the empty set if and only if
$H$ is not the zero series. In this case,
consider the translation map
$\tau(a,b)=(a,b-|r|)$. It is straightforward to prove that 
$\mathcal{C}_r(P)=\tau^{-1}(\mathcal{C}(H))$. 
Hence,
$L_r(P;\mu)=\tau^{-1}(L(H;\mu))$, and
$Q_r(P;\mu)=\tau^{-1}(Q(H;\mu))$.

Let $\psi(x)=\sum_{i=0}^{\infty}c_i\,x^{\mu_i}$, with $\mu_0>\mu_{-1}(P)$.
Denote $H_0=H$ and
$H_{i+1}=H_i[c_ix^{\mu_i}+Y]$. By the chain
rule,%
\begin{equation}\label{eq:chain_rule}
  \frac{\partial^{\pedro{|r|}{}} P_{i}}{\partial Y^{r}}=H_{i},\quad i\geq 0.
\end{equation}
The sequence of points
$Q_r(P_i;\mu_i)=\tau^{-1}(Q(H_i;\mu_i))$ for $i\geq 0$ stabilizes at
some point denoted $Q_r(P;\psi(x))$ and which we
call \emph{the
pivot point of $P$ with respect to $\psi(x)$ relative to $Y^{r}$}.
Therefore
\begin{equation}
  \label{eq:relacion_pivot_point_pivot_point_derivada}
  Q(H;\psi(x))=\tau(Q_r(P;\psi(x)))
\end{equation}
\begin{remark}\label{re:bien_definido_PivotPointRelativo}
  Since $H\neq 0$, then $H_i\neq 0$, for $i\geq 0$ so that
  $\mathcal{C}_r(P_i)$ is not empty, for $i\geq 0$. This proves that
  $Q_r(P;\mu_i)$ and $Q_r(P;\psi(x))$ are well-defined provided the
  monomial $Y^{r}$ appears effectively in $P$.
\end{remark}

From now on, we shall denote $e_j$ the vector $(0,\dots, 0, 1, 0,
\dots, 0)$ where the $1$ appears at position $j+1$, for $j=0,\dots,
n$. Thus, $e_j=(\delta_{ij})_{0\leq i\leq n}\in \mathbb{N}^{n+1}$ 
where $\delta_{ij}$ is the Kronecker symbol. 

\begin{proposition}\label{pro:reducion_altura_1}
  Let $Q=(a,h)$ be the pivot point of $P$ with respect
  to $\psi(x)$. Assume that the monomial
  $Y^{r'}$ appears effectively in $Q$. Let $r\in \mathbb{N}^{n+1}$, with
  $r\preceq r'$, and  $H=\frac{\partial^{|r|} P}{\partial Y^{r}}$.
  Then the pivot point of $H$ with respect to $\psi(x)$ is
  $(a,h-|r|)$. 
  In particular, 
if $r=r'-e_i$, for some $i$ such that $r'_i\geq 1$, then the
  ordinate of the pivot point $Q(H;\psi(x))$ is $1$.
  However,  for $r=r'$, one has $Q(H;\psi(x))=(a,0)$ and
  therefore $\psi(x)$ is not a solution of $H[y]=0$. 
\end{proposition}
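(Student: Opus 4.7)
The plan is to leverage equation~(\ref{eq:relacion_pivot_point_pivot_point_derivada}), which relates $Q(H;\psi(x))$ to the relative pivot point $Q_r(P;\psi(x))$ via the translation $\tau(a,b)=(a,b-|r|)$. So the whole proposition reduces to showing that $Q_r(P;\psi(x))$ coincides with the absolute pivot $Q=(a,h)$; the two particular cases $r=r'-e_i$ and $r=r'$ then follow by direct computation of $|r|$.

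Let $N$ be the step at which the absolute pivot is reached, so $Q(P_i;\mu_i)=(a,h)$ for all $i\geq N$. I expect the main technical step to be showing that $(a,h)\in\mathcal{C}_r(P_i)$ for every $i\geq N$. For this, observe that $(a,|r'|)=(a,h)$ sits exactly at the ordinate of the pivot point, so part~(1) of Lemma~\ref{le:NP_behaviour_under_translation}, applied inductively to the translations $P_{i}=P_{i-1}[c_{i-1}x^{\mu_{i-1}}+Y]$, shows that $(P_i)_{a,r'}=(P_N)_{a,r'}$, which is nonzero by the effective-appearance hypothesis on $Y^{r'}$. Because $r\preceq r'$, this forces $(a,h)\in\mathcal{C}_r(P_i)$.

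From here, for $i\geq N$, one has $(a,h)\in L(P_i;\mu_i)\cap\mathcal{C}_r(P_i)$, and since $\mathcal{C}_r(P_i)\subseteq\mathcal{C}(P_i)$, any line of co-slope $\mu_i$ meeting $\mathcal{C}_r(P_i)$ lies on or to the right of $L(P_i;\mu_i)$; the leftmost such line must therefore be $L(P_i;\mu_i)$ itself, i.e. $L_r(P_i;\mu_i)=L(P_i;\mu_i)$. Since the highest ordinate on $L(P_i;\mu_i)\cap\mathcal{N}(P_i)$ is $h$, the highest ordinate on $L(P_i;\mu_i)\cap\mathcal{C}_r(P_i)$ is also $h$, attained at $(a,h)$. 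Thus $Q_r(P_i;\mu_i)=(a,h)$ for all $i\geq N$, giving $Q_r(P;\psi(x))=(a,h)$ and, by~(\ref{eq:relacion_pivot_point_pivot_point_derivada}), $Q(H;\psi(x))=(a,h-|r|)$.

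For the last assertion, $r=r'$ gives $|r|=h$, so $Q(H;\psi(x))=(a,0)$. If $\psi(x)$ were a solution of $H[y]=0$, Corollary~\ref{co:solution->ordinatePivotPoint>1} applied to the first $\omega$ terms of $\psi(x)$ would force the corresponding pivot to have ordinate at least one; a short comparison, using that the construction is unchanged at indices with $c_i=0$ so that the two pivot sequences stabilize at the same point, yields the required contradiction.
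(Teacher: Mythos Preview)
Your proof is correct and follows essentially the same route as the paper's: both show $Q_r(P;\psi(x))=Q$ by using that $Y^{r'}$ appears effectively in the pivot (hence $Q\in\mathcal{C}_{r'}(P_i)\subseteq\mathcal{C}_r(P_i)$ for $i\geq N$, via part~(1) of Lemma~\ref{le:NP_behaviour_under_translation}) together with the inclusion $\mathcal{C}_r(P_i)\subseteq\mathcal{C}(P_i)$, and then apply~(\ref{eq:relacion_pivot_point_pivot_point_derivada}). For the final assertion the paper cites Lemma~\ref{le:equivalencia_Procedure_PivotPoint} directly, while you invoke its Corollary~\ref{co:solution->ordinatePivotPoint>1} and add the (correct) remark that zero coefficients do not affect the pivot sequence; this is a cosmetic difference only.
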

\begin{proof} 
  Assume the pivot point $Q$ is reached at step $N$, thus $Q\in
  \mathcal{C}_{r'}(P_i)\subseteq \mathcal{C}_{r}(P_i)$ for
  all $i\geq N$. From $\mathcal{C}_r(P_i)\subseteq \mathcal{C}(P_i)$
  and the fact that $Q=Q(P_i;\mu_i)$ for all $i>N$, one infers
  $Q=Q_r(P_i;\mu_i)=Q_{r'}(P_i;\mu_i)$ for all $i>N$. This means that $Q$ is
  the pivot point of 
  $P$ with respect to $\psi(x)$ relative to $Y^{r}$ and also relative
  to $Y^{r'}$.  
  As we have seen before, $\tau_r(Q)=(a,h-|r|)$ is
  the pivot point of $H$ with respect to $\psi(x)$. 
 The third
  statement is a consequence of
  Proposition~\ref{pro:equivalencia_Procedure_PivotPoint}.
\end{proof}

\begin{corollary}
  Let $\psi(x)=\sum_{i=0}^\infty c_i\,x^{\mu_i}$ be a solution of $P[y]=0$ with
  $\lim\mu_i=\infty$. If the pivot point
  $(P;\psi(x))$ has ordinate  greater than $1$, then there exists a
  non trivial derivative $H=\frac{\partial^{|r|} P}{\partial Y^{r}}$
  of  $P$,  such that $\psi(x)$ is a solution of $H[y]=0$.
\end{corollary}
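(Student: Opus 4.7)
The plan is to read this off almost directly from Proposition~\ref{pro:reducion_altura_1} combined with Lemma~\ref{le:equivalencia_Procedure_PivotPoint}. The proposition tells us exactly how the ordinate of the pivot point drops by $|r|$ when we pass from $P$ to $H=\partial^{|r|}P/\partial Y^{r}$, and the lemma translates an ordinate $\geq 1$ into the fact that $\psi(x)$ actually solves the equation.

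First I would write $Q=(a,h)$ for the pivot point of $P$ with respect to $\psi(x)$, reached at some step $N$. By hypothesis $h>1$, so $h\geq 2$. By the definition given between Lemma~\ref{le:NP_behaviour_under_translation} and Lemma~\ref{le:equivalencia_Procedure_PivotPoint}, the fact that $Q$ is a vertex effectively appearing in $\mathcal{N}(P_N)$ means that there exists $r'\in\mathbb{N}^{n+1}$ with $|r'|=h$ such that the monomial $Y^{r'}$ appears effectively at $Q$, i.e. $(P_N)_{a,r'}\neq 0$. Since $|r'|=h\geq 2$, in particular some coordinate $r'_i\geq 1$ and I can set $r=r'-e_i$ and $H=\partial^{|r|}P/\partial Y^{r}$. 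Then $|r|=h-1\geq 1$, so $H$ is a proper derivative; moreover $H\neq 0$ because the monomial $Y^{r'}$ appears with a nonzero coefficient in $P$ and $r\preceq r'$, so (by Remark~\ref{re:bien_definido_PivotPointRelativo}) the relative pivot point $Q_r(P;\psi(x))$ is well-defined.

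Now I would apply Proposition~\ref{pro:reducion_altura_1} with this choice of $r\preceq r'$. It gives that the pivot point of $H$ with respect to $\psi(x)$ is $(a,h-|r|)=(a,1)$, in particular of ordinate~$1$. Since by hypothesis $\lim\mu_i=\infty$, the equivalence $(1)\Leftrightarrow(3)$ in Lemma~\ref{le:equivalencia_Procedure_PivotPoint} applied to $H$ shows that $\psi(x)$ is a solution of $H[y]=0$, as required.

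There is really no hard step here; the only thing to check carefully is that the chosen derivative is genuinely non-trivial (that is, $|r|\geq 1$ and $H\neq 0$), which is why the hypothesis $h>1$ (strict) is used: it guarantees $|r'|\geq 2$ so that after removing one $e_i$ we still have $|r|\geq 1$. Had the hypothesis been $h\geq 1$, the natural candidate would have been $r=r'$ itself, and Proposition~\ref{pro:reducion_altura_1} would have then produced $Q(H;\psi(x))=(a,0)$, giving the \emph{opposite} conclusion that $\psi(x)$ is not a solution of $H[y]=0$.
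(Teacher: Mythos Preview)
Your proof is correct and follows essentially the same route as the paper: pick a monomial $Y^{r'}$ appearing effectively at the pivot point, take $r\preceq r'$ with $1\le|r|<|r'|$, apply Proposition~\ref{pro:reducion_altura_1} to get ordinate $\ge 1$ for the pivot point of $H$, and conclude via Lemma~\ref{le:equivalencia_Procedure_PivotPoint}. One small slip: when you argue $H\neq 0$ you say ``$Y^{r'}$ appears with a nonzero coefficient in $P$'', but what you actually know is $(P_N)_{a,r'}\neq 0$, i.e.\ it appears in $P_N$; this still forces $H\neq 0$ (since $H=0$ would give $H_N=\partial^{|r|}P_N/\partial Y^{r}=0$), so the conclusion stands.
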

\begin{proof}
 Let $Y^{r'}$ be a monomial that appears effectively in the pivot
 point $Q=Q(P;\psi(x))$. Since $Q$ has ordinate greater that $1$, $r'$
 can be chosen with $|r'|\geq 2$. Let $r$ be such
 that $r\preceq r'$ and $1\leq |r|<|r'|$. 
 By the Proposition, the pivot point of $H$ with
 respect to $\psi(x)$ has ordinate greater than or equal to $1$.
 By Proposition~\ref{pro:equivalencia_Procedure_PivotPoint}, $\psi(x)$ is a
 solution of $H[y]=0$.
\end{proof}

\begin{lemma}\label{le:relativePivotPoints}
  Let $Q(P;\psi(x))=(a,b)$ and $Q_r(P;\psi(x))=(a',b')$ be
  respectively the general pivot point of $\psi(x)$ and the pivot
  point of $\psi(x)$ relative to $Y^{r}$.
  If  the sequence $\mu_i$ of exponents of $\psi(x)$ tends to infinity,
  then {}the following two statements hold:{}
  \begin{itemize}
    {}
  \item \pedro{}The ordinate of $Q_r(P;\psi(x))$ is at least
    $b$:\pedro{} $b'\geq b$, and
  \item If $b'=b$ \pedro{}(both points are at the same height)\pedro{}, then $a' \geq a$.
    {}
  \end{itemize}
\end{lemma}
\begin{proof}
  Assume that both pivot points have been reached at step $N$.
  For any $i\geq N$, the point $(a',b')$ belongs to the closed right
  half plane  {}$L(P_{i};\mu_i)^{+}${} because  $\mathcal{C}_r(P_i)\subseteq
  \mathcal{C}(P_i)$. Since $(a,b)\in {}L(P_i;\mu_i){}$ for all $i\geq N$,
  and $\lim \mu_i=\infty$, the intersection of all the half planes
  {}$L(P_i;\mu_i)^{+}${} for 
  $i\geq N$, is the region {}$R${} formed by the  points in {}$L(P_N;\mu_N)^{+}${} 
  with ordinate greater than or equal to~$b$. {} The result follows because 
$(a',b')\in R$, and $(a,b)$ is the most left point of $R$ with
ordinate equal to~$b$. {}
\end{proof}

\section{Finiteness  properties}
Throughout this section,  we assume that $\Gamma$ is a finitely
generated semigroup of $\mathbb{R}_{\geq 0}$ and that $P$ is a nonzero
element of $\mathcal{R}_\Gamma[[Y]]$. We also assume that $q\neq 1$:  the
case $q=1$ is
reduced to the case $n=0$  considering 
$P(Y_0,Y_0,\ldots,Y_0)$.
This section is devoted to proving the following results:
\begin{theorem}\label{th:solutions_are_grid_based}
  If $y\in\mathbb{C}((x^{\mathbb{R}}))$ is a solution of
  equation~(\ref{eq:q-difference-equation-intro}), then 
  it is a grid-based formal power series.
\end{theorem}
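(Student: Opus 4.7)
Let $\psi(x) = \sum_{i \geq 0} c_i\,x^{\mu_i}$ denote the first $\omega$ terms of $y$. By Corollary~\ref{co:corolario-lema-clave}, $\psi$ satisfies $\NIC(P)$, and by Corollary~\ref{co:solution->ordinatePivotPoint>1} the pivot point $Q = Q(P;\psi(x)) = (a,h)$ has $h \geq 1$. The strategy has three steps: (i) reduce to the ordinate-one case via partial derivatives; (ii) analyze which exponents $\mu_{i+1}$ can appear after the pivot stabilizes; (iii) exhibit a finitely generated subsemigroup $\Gamma^{*} \subseteq \mathbb{R}_{> 0}$ containing all $\mu_i$. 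Since any such $\Gamma^{*}$ has only finitely many elements below any bound, this will force $\mu_i \to \infty$, whence $\psi(x) = y$ in the Hahn field and $\supp y \subseteq \Gamma^{*}$, proving $y$ is grid-based.

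For the reduction, if $h \geq 2$, pick a monomial $Y^{r'}$ appearing effectively at the pivot (so $|r'| = h$) and let $r \preceq r'$ with $|r| = h - 1$; set $H = \partial^{|r|} P / \partial Y^{r} \in \mathcal{R}_\Gamma[[Y]]$. By Proposition~\ref{pro:reducion_altura_1}, the pivot of $H$ with respect to $\psi(x)$ is $(a,1)$, and by the equivalence (1)$\Leftrightarrow$(2) of Lemma~\ref{le:equivalencia_Procedure_PivotPoint}, $\psi$ satisfies $\NIC(H)$. Replacing $P$ by $H$, we may assume $h = 1$. Let $N$ be the step at which the pivot $Q = (a,1)$ stabilizes. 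By Lemma~\ref{le:NP_behaviour_under_translation}(1), the part of $\mathcal{N}(P_i)$ of ordinate $\geq 1$ is constant for $i \geq N$; consequently the indicial polynomial $\Psi(T) := \Psi_{(P_i;Q)}(T)$ and the finite root set $\Lambda := \{\lambda \in \mathbb{R} : \Psi(q^{\lambda}) = 0\}$ are stable. The quantifier-free reformulation embodied in~(\ref{eq:nec_cond_lados})--(\ref{eq:nec_cond_vertices}) then forces each $\mu_{i+1}$ (for $i \geq N$) either to lie in $\Lambda$ (vertex case, occurring at most $|\Lambda|$ times by strict monotonicity) or to equal $\beta_{i+1} - a$, where $\beta_{i+1} = \ord P[\psi_{i+1}(x)]$ is the order of the constant term of $P_{i+1}$ (side case). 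The support of $P[\psi_{i+1}]$ lies in $\Gamma + \mathbb{N}\{\mu_0, \ldots, \mu_i\}$, so $\beta_{i+1}$ does too; likewise $a$, being the abscissa of a cloud point on ordinate~$1$ of $P_N$, lies in $\Gamma + \mathbb{N}\{\mu_0, \ldots, \mu_{N-1}\}$.

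Choosing $N' \geq N$ past the last vertex-case step, let $\Gamma^{*}$ be the subsemigroup of $\mathbb{R}_{> 0}$ generated by the generators of $\Gamma$, the finite initial block $\{\mu_0, \ldots, \mu_{N'-1}\}$, and $\Lambda$; this is finitely generated. An induction on $i$ then aims to show $\mu_i \in \Gamma^{*}$ for all $i$. The vertex case is direct, and the side case reduces to verifying that $\beta_{i+1} - a \in \Gamma^{*}$. This is the main obstacle: $\Gamma^{*}$ is not closed under subtraction, so the verification must exploit the precise Newton-polygon constraint that the segment from $(a,1)$ to $(\beta_{i+1},0)$ is a side of $\mathcal{N}(P_{i+1})$. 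This geometric information couples the decompositions of $a$ and $\beta_{i+1}$ in terms of the chosen generators in a way that forces their difference back into the positive cone $\Gamma^{*}$, closing the induction and hence the proof.
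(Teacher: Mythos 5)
Your architecture matches the paper's: take the first $\omega$ terms, note they satisfy $\NIC(P)$, reduce to pivot ordinate one via a derivative (Proposition~\ref{pro:reducion_altura_1}), then trap the exponents through the vertex/side dichotomy at the stabilized pivot. But there is a genuine gap exactly where you flag ``the main obstacle'': you never show $\beta_{i+1}-a\in\Gamma^{*}$, you only assert that the Newton-polygon geometry ``forces the difference back into the positive cone.'' It does not, as stated: $\beta_{i+1}$ and $a$ are each sums of generators of $\Gamma$ and of earlier $\mu_j$'s, so their difference is merely a positive element of the \emph{group} these generate; the positive part of that group can have accumulation points, whereas what you need is membership in a finitely generated subsemigroup of $\mathbb{R}_{\geq 0}$, since discreteness is precisely what forces $\mu_i\to\infty$. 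The paper's resolution is the change of variable $z=x^{-\gamma}y$, i.e.\ the operator $P\mapsto\mshift{P}{\gamma}$ of Subsection~\ref{sec:changeOfVariableMshif}: choosing a rational $\gamma$ with $\mu_{N-1}\leq\gamma<\mu_N$ and passing to $\mshift{P_N}{\gamma}$ moves the pivot to $(0,1)$ (quasi-solved form, Lemma~\ref{le:reduction_to_solved_form}), so that in the side case the new exponent \emph{equals} the abscissa of the foot of the side, which lies directly in the support of the constant term of $P_i$ --- no subtraction ever occurs (Lemma~\ref{le:quasi_solved_form_case}). The price is that the coefficient semigroup changes under the shift, and Lemma~\ref{le:shiftFinitelyGeneratedSemigroups} (Dickson's lemma) is what guarantees it stays finitely generated. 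Without this normalization, or an equivalent device, your induction does not close.

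A second, smaller gap: you call $\Lambda=\{\lambda\mid\Psi(q^{\lambda})=0\}$ ``the finite root set'' and choose $N'$ ``past the last vertex-case step.'' This is valid only for $|q|\neq 1$. The theorem is proved under the sole hypothesis $q\neq 1$, and when $|q|=1$ the set $\Lambda$ is typically infinite (Remark~\ref{re:set_Sigma}), so infinitely many vertex steps may occur. The paper copes by showing that $\Sigma^{+}$ nevertheless generates a finitely generated semigroup (again via Lemma~\ref{le:shiftFinitelyGeneratedSemigroups}) and adjoining it to the generating set, rather than by exhausting the vertex case. Finally, your $\Gamma^{*}\subseteq\mathbb{R}_{>0}$ cannot literally contain all the $\mu_i$ when $\mu_0<0$ (which is allowed for polynomial $P$); the correct containment is $\supp y\subseteq\mu_0+\Gamma^{*}$, which is exactly the grid-based condition.
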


\begin{proposition}\label{pro:NIC->solucion}
  If $\psi(x)=\sum_{i=0}^{\infty} c_ix^{\mu_i}$ satisfies $\NIC(P)$,
  then $\psi(x)$ is a solution of $P[y]=0$.
\end{proposition}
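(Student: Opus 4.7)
By Lemma~\ref{le:equivalencia_Procedure_PivotPoint}, the hypothesis $\NIC(P)$ is equivalent to the pivot point $Q=Q(P;\psi(x))=(a,h)$ having ordinate $h\geq 1$, and the same Lemma concludes that $\psi(x)$ is a solution of $P[y]=0$ as soon as we know $\lim_{i\to\infty}\mu_i=\infty$. Since $(\mu_i)$ is strictly increasing, the entire proof reduces to excluding a finite accumulation value $\mu_\infty<\infty$.

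To control the $\mu_i$'s, let $N$ be a step at which the pivot $Q$ is already reached. By Lemma~\ref{le:NP_behaviour_under_translation}(1), for every $i\geq N$ the Newton polygon $\mathcal{N}(P_i)$ coincides with $\mathcal{N}(P_N)$ in the half-plane $b\geq h$; in particular the indicial polynomial $\Psi(T):=\Psi_{(P_i;Q)}(T)$ is independent of $i\geq N$. The identity $\Phi_{(P_i;\mu_i)}(c_i)=0$ then falls, exactly as in the alternative~(\ref{eq:nec_cond_lados})--(\ref{eq:nec_cond_vertices}), into two cases: either (a) $L(P_i;\mu_i)$ contains a side of $\mathcal{N}(P_i)$ joining $Q$ to some other vertex $(\alpha_i,b_i)$ with $b_i<h$, so that $\mu_i=(\alpha_i-a)/(h-b_i)$; or (b) $L(P_i;\mu_i)\cap\mathcal{N}(P_i)=\{Q\}$ and $q^{\mu_i}$ is a root of the fixed polynomial $\Psi$. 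Since $q\neq 1$ and $\Psi$ has finitely many roots, case~(b) contributes only a locally finite set of real $\mu_i$'s and on its own cannot produce accumulation.

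The core of the argument lies in case~(a). I would show, by induction on $i\geq N$, that the $x$-coordinates $\alpha_i$ that can arise all belong to a fixed finitely generated sub-semigroup $\Gamma'\subseteq\mathbb{R}_{\geq 0}$ built from $\Gamma$, from the finite data $\mu_0,\ldots,\mu_{N-1}$, and from the finite set of exponents produced by case~(b). Any such $\Gamma'$ has the property that $\Gamma'\cap[0,R]$ is finite for every $R>0$, and as the denominators $h-b_i\in\{1,\ldots,h\}$ are bounded, the set of admissible $\mu_i$'s is itself a locally finite subset of $\mathbb{R}$. A strictly increasing sequence in a locally finite set must diverge, so $\mu_i\to\infty$, and Lemma~\ref{le:equivalencia_Procedure_PivotPoint}(3) concludes.

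The main obstacle is the inductive step: each translation $P_{i+1}=P_i[c_ix^{\mu_i}+Y]$ injects $\mu_i$ into every coefficient of $P_{i+1}$, so the semigroup apparently containing the supports of $P_i$ threatens to grow with $i$. To close the induction one must exploit that only the part of $\mathcal{N}(P_i)$ with ordinate $<h$ is relevant for future $\mu_j$'s; this should be combined with the relative-pivot-point identification~(\ref{eq:relacion_pivot_point_pivot_point_derivada}) and Lemma~\ref{le:relativePivotPoints} to bound how much the relevant $\alpha_i$'s can drift, and to show that, for $i\geq N$, no new generators of $\Gamma'$ are ever required. Making this bookkeeping precise is, in my view, the real content of the proof.
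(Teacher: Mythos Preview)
Your reduction via Lemma~\ref{le:equivalencia_Procedure_PivotPoint} to proving $\lim\mu_i=\infty$ is correct, and the dichotomy into cases~(a)/(b) is the right picture once the pivot point is fixed. The gap is exactly where you locate it: the inductive step in case~(a). At ordinate $h>1$, the side from $Q=(a,h)$ to $(\alpha_i,b_i)$ gives $\mu_i=(\alpha_i-a)/(h-b_i)$ with $h-b_i\in\{1,\dots,h\}$, so each new $\mu_i$ is obtained from an element of $\Gamma_i$ by a \emph{division}; feeding this $\mu_i$ back into $\Gamma_{i+1}=\Gamma_i+\mu_i\mathbb{N}$ can genuinely enlarge the ambient semigroup, and nothing in the relative-pivot-point machinery (equation~(\ref{eq:relacion_pivot_point_pivot_point_derivada}) or Lemma~\ref{le:relativePivotPoints}) controls the $x$-coordinates of points at ordinate $<h$ in $\mathcal{N}(P_i)$---those lemmas compare pivot points, not supports. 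So the ``bookkeeping'' you allude to is not just bookkeeping; as stated, the induction does not close.

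The paper sidesteps this entirely by first \emph{reducing the ordinate of the pivot point to~$1$}. By Proposition~\ref{pro:reducion_altura_1}, if a monomial $Y^{r'}$ with $|r'|=h$ appears effectively in $Q$, then for $r\preceq r'$ with $|r|=h-1$ the derivative $H=\partial^{|r|}P/\partial Y^r$ has pivot point of ordinate~$1$ with respect to the same $\psi(x)$, and $\psi$ still satisfies $\NIC(H)$. After this replacement and a change of variable (Lemma~\ref{le:reduction_to_solved_form}), one is in quasi-solved form with $Q=(0,1)$; then in case~(a) the side runs from $(0,1)$ to a point $(\mu_i,0)$ on the $x$-axis, so $\mu_i$ is \emph{equal} to an element of $\supp(P_i)_{\underline{0}}$ with no division at all. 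This is exactly what makes the induction in Lemma~\ref{le:quasi_solved_form_case} close on a fixed semigroup $\Gamma'=\Gamma^*+\Sigma^+\mathbb{N}$. You are missing this reduction-to-ordinate-$1$ step, and without it your argument cannot be completed along the lines you sketch.

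A minor additional point: in case~(b) you should also allow $c_i=0$ (the NIC definition permits it), in which case $\mu_i$ is unconstrained; this is harmless because such steps leave $P_i$ unchanged and do not affect $\supp\psi$, but your case analysis as written does not cover it.
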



\begin{definition}\label{de:finite-determination}
   Let $y\in \mathbb{C}((x^{\mathbb{R}}))$ and $P\in
  \mathcal{R}_{\Gamma}[[Y]]$. We say that $y$ is \emph{finitely determined
  by $P$} if there exist positive integers $k$ and $h$, such that if
$y_k$ denotes the first $k$ terms of $y$ then $y$ is the only element
$z\in \mathbb{C}((x^{\mathbb{R}}))$  satisfying the following property:
{}
``$z_k=y_k$ and  $Q[y]=0$ if and only if: for any 
$Q=\frac{\partial^{|r|} P}{\partial Y^{r}}$, with $|r|\leq h$, one has  $Q[z]=0$.''
 {}
\end{definition}

\begin{theorem}\label{th:solutions_are_finitely_determined}
  If $\abs{q}\neq 1$, then any solution $y$ of
  equation~(\ref{eq:q-difference-equation-intro}) is finitely
  determined by $P$.
\end{theorem}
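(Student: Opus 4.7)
The plan is to reduce to a derivative $H$ of $P$ whose pivot with respect to $y$ has ordinate exactly one, and then exploit that when $|q|\neq 1$ the associated indicial polynomial has only finitely many critical exponents.

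Let $y=\sum_{i}c_{i}x^{\mu_{i}}$ be the first $\omega$ terms of the solution and $Q(P;y)=(a,h_{0})$ the pivot point of $P$ with respect to $y$; Corollary~\ref{co:solution->ordinatePivotPoint>1} gives $h_{0}\geq 1$. Pick a monomial $Y^{r'}$ with $|r'|=h_{0}$ appearing effectively at $Q$ and choose $r\preceq r'$ with $|r|=h_{0}-1$. Set $H:=\partial^{|r|}P/\partial Y^{r}$ (so $H=P$ when $h_{0}=1$). By Proposition~\ref{pro:reducion_altura_1} the pivot of $H$ with respect to $y$ is $(a,1)$, and hence $H[y]=0$ by Lemma~\ref{le:equivalencia_Procedure_PivotPoint}. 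Take $h:=h_{0}$ in Definition~\ref{de:finite-determination} so that $H$ is among the derivatives tested.

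Let $N_{H}$ be the step at which the pivot of $H$ with respect to $y$ is reached. Part~(1) of Lemma~\ref{le:NP_behaviour_under_translation} implies that the coefficients $(H_{i})_{a,e_{j}}$ do not depend on $i\geq N_{H}$, and the indicial polynomial
\[
\Psi(T)=\Psi_{(H_{i};(a,1))}(T)=\sum_{j=0}^{n}(H_{i})_{a,e_{j}}\,T^{j}
\]
is a well-defined nonzero polynomial. Because $|q|\neq 1$, the map $\mu\mapsto q^{\mu}$ is injective on $\mathbb{R}$, so $S=\{\mu\in\mathbb{R}:\Psi(q^{\mu})=0\}$ is finite. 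Choose $k\geq N_{H}$ with $\mu_{k-1}$ large enough that $\mu_{k-1}\geq\max(a,\,\max S)$ and that, for each of the (finitely many when $P$ is a polynomial; otherwise automatically of order $\geq 0$ since $\ord y>0$) multi-indices $\rho$ with $|\rho|\geq 2$ and $H_{\rho}[y]\neq 0$, one has $\ord H_{\rho}[y]+(|\rho|-1)\mu_{k-1}>a$.

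Now let $z\in\mathbb{C}((x^{\mathbb{R}}))$ satisfy the conditions of Definition~\ref{de:finite-determination} for this $k,h$. Since $|r|=h_{0}-1\leq h$, we have $H[z]=H[y]=0$. Set $u:=z-y$; the equality $z_{k}=y_{k}$ forces $\ord u>\mu_{k-1}$. Assume $u\neq 0$ and write $u=c^{*}x^{\mu^{*}}+o(x^{\mu^{*}})$ with $c^{*}\neq 0$, $\mu^{*}>\mu_{k-1}$. The Taylor expansion~(\ref{eq:def-translacion-series}) applied to $H[y+u]$ gives
\[
0=H[y+u]-H[y]=L_{y}u+N(u),\qquad L_{y}u:=\sum_{j=0}^{n}H_{e_{j}}[y]\,\sigma^{j}(u),
\]
where $N(u)$ gathers the monomials $\prod_{j}(\sigma^{j}u)^{\rho_{j}}$ with $|\rho|\geq 2$. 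Because $\mu_{k}>a$, the identity $H_{e_{j}}[y]=H_{e_{j}}[\psi_{k}]+O(x^{\mu_{k}})$ yields $\ord H_{e_{j}}[y]\geq a$ with coefficient of $x^{a}$ equal to $(H_{k})_{a,e_{j}}$. Hence the leading monomial of $L_{y}u$ is $c^{*}\Psi(q^{\mu^{*}})\,x^{a+\mu^{*}}$, while $\ord N(u)>a+\mu^{*}$ by the choice of $k$. Matching lowest orders forces $c^{*}\Psi(q^{\mu^{*}})=0$; but $\mu^{*}>\max S$ implies $\Psi(q^{\mu^{*}})\neq 0$, contradicting $c^{*}\neq 0$. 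Thus $u=0$ and $z=y$.

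The main obstacle is that the leading-term identification of $L_{y}u$ must reproduce exactly the indicial polynomial at the pivot, which relies on the stability of the pivot coefficients from Lemma~\ref{le:NP_behaviour_under_translation}, together with the strict inequality $\ord N(u)>a+\mu^{*}$; when $P$ is merely polynomial this strictness may require raising $k$ to dominate finitely many negative-order contributions. The hypothesis $|q|\neq 1$ is used precisely to keep $S$ finite: if $|q|=1$ the set $\{\mu:\Psi(q^{\mu})=0\}$ may be unbounded above and the uniqueness argument collapses, which is consistent with the paper's earlier comments on the essential difference of the $|q|=1$ case.
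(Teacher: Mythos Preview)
Your argument is correct and takes a genuinely different route from the paper's.

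The paper first reduces to quasi-solved form via Lemmas~\ref{le:reduction_to_solved_form} and~\ref{le:quasi_solved_form_case}, then invokes the recursive formula of Corollary~\ref{co:solved_form_recursive_formula} to force uniqueness. Crucially, the paper must handle separately the possibility that the pivot point of $P$ with respect to $z$ differs from that with respect to $y$: in that case it locates a derivative $H'$ with $H'[y]=0$ but $H'[z]\neq 0$, contradicting the hypothesis. This is precisely why the paper needs the full ``if and only if'' condition on \emph{all} derivatives of order $\leq h$ in Definition~\ref{de:finite-determination}.

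You bypass both the reduction to solved form and the pivot-point case split by working directly with the Taylor expansion $H[y+u]-H[y]=L_{y}u+N(u)$ and reading off the leading term of $L_{y}u$ from the pivot data. Since you only use the single derivative $H$ (of order $h_{0}-1$), your argument actually establishes a slightly sharper statement than the definition requires. The trade-off is that your approach needs explicit control of $\ord N(u)$ and of the $x^{a}$-coefficients of $H_{e_{j}}[y]$, whereas the paper's argument packages these estimates inside the solved-form machinery already developed.

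One point where your write-up is loose: the identity $H_{e_{j}}[y]=H_{e_{j}}[\psi_{k}]+O(x^{\mu_{k}})$ is not literally justified (when $P$ is a polynomial and $\ord y<0$ the error term can have smaller order). What you actually need---that $\ord H_{e_{j}}[y]\geq a$ with $x^{a}$-coefficient equal to $(H_{N_{H}})_{a,e_{j}}$---follows more cleanly by noting that $\ord(H_{i})_{e_{j}}(x)\geq a$ and $(H_{i})_{a,e_{j}}$ is constant for all $i\geq N_{H}$ (Lemma~\ref{le:NP_behaviour_under_translation}(1)), and then passing to the limit $H_{e_{j}}[\psi_{i}]\to H_{e_{j}}[y]$ in the valuation topology. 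With that adjustment the argument is clean.
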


The hypothesis $|q|\neq 1$ is
necessary: let $P=Y_0-Y_1$ and $q=\sqrt{-1}$. Any series $\sum_{i=0}^\infty
c_{4i}\,x^{4i}$ (for arbitrary constants $c_{4i}$) is a  solution of
$P[y]=0$.
Since $\partial P/\partial Y_0 [y]=0$ and  $\partial
P/\partial Y_1[y]=0$ have no solutions, and  higher order
derivatives of $P$ are zero, none of these solutions is  
finitely determined by~$P$. {}If $|q|=1$, $q^{\alpha}=1$ for
$\alpha >0$ irrational, and $q\neq 1$, then $\sum_{i=0}^{\infty}
a_ix^{i\alpha}$ is a also a solution of $P[y]=0$ for any sequence
$a_i$, and it is not finitely determined either. {}


\begin{remark}\label{re:Gamma_numerable}
  Let $\Gamma$ be a finitely generated semigroup of $\mathbb{R}_{\geq
    0}$. For any real number $k$, the set $\Gamma\cap \{r\mid r\leq
  k\}$ is finite. Hence $\Gamma$ is a well-ordered set with no
  accumulation points and its  elements can be enumerated
  in increasing order: $\Gamma=\{\gamma_i\}_{i\geq 0}$, with
  $\gamma_{i}<\gamma_{i+1}$ and $\lim \gamma_i=\infty$. Let
  $\psi(x)=\sum_{i=0}^{\infty} c_ix^{\mu_i}$ be the 
  first $\omega$ terms of an element
  $y\in\mathcal{R}$. 
  If $\supp \psi(x)$ is contained in $\Gamma$ then
  either it is finite or $\lim \mu_i=\infty$. In both
  cases, $y=\psi(x)$. In particular, any element of
  $\mathcal{R}$ whose support is contained in 
  $\Gamma$ coincides with its first $\omega$ terms.
\end{remark}



\subsection{Quasi solved form}\label{subsec:quasi_solved_form}
\pedro{} Once we know that the pivot point $Q$ corresponding to the
solution $s(x)$ can be assumed to have ordinate $1$, we perform a
transformation on $P$ sending $Q$ to $(0,1)$. Any equation whose pivot
point with respect to a solution is at $(0,1)$ is very easy to study,
as the successive Newton polygons only change below that point. This,
together with the ease of computing their solutions is what makes this
property relevant and deserving its own name, \emph{quasi-solved
  form}.

A special case of \emph{quasi-solved form}, called \emph{solved form},
guarantees also that $P$ has a unique solution $s(x)$ with $s(0)=0$.
If $P$ has integer exponents and is in solved form, then it
has a single solution $s(x)$ with $s(0)=0$ and its exponents are
integer (i.e. $s(x)$ is a formal power series). As a side note,
solutions to equations in solved form are studied in depth in our book
\cite{Barbe-Cano-Fortuny-McCormick-arxiv} (their asymptotic
properties, radius of convergence, etc.). In fact, many
power series arising from combinatorial problems are in (or are easily
turned into) solved form. We refer to
\cite{Barbe-Cano-Fortuny-McCormick-arxiv} for the details.  \pedro{}

We say that the
equation
\begin{equation}
  \label{eq:solved_form:1}
  P[y]=0,\quad \ord(y)>0,
\end{equation}
is in \emph{quasi-solved form} if the point $(0,1)$ is a vertex of
$\mathcal{N}(P)$ and $(0,0)\not\in \mathcal{C}(P)$. If this is the
case, let $\Psi(T)$ be
the indicial polynomial of $P$ at 
$(0,1)$,  $\Sigma=\{\mu\in \mathbb{R}\mid \Psi(q^{\mu})=0\}$ and 
$\Sigma^{+}=\Sigma\cap \mathbb{R}_{>0}$. 
We say that equation~(\ref{eq:solved_form:1}) is in
\emph{solved form} if $\Sigma^{+}$ is the empty set.
{}
One can verify (but it is irrelevant to our purposes) that an equation in solved form has a unique grid-based power series solution.
{}

{}For the sake of comparison, a \emph{linear} equation $Q=\sum
a_i(x)\sigma^{j}$ is in quasi-solved form if
$a_j(0)\neq 0$ for some  $j\geq 1$.{}

{}
The proof of Theorems 1 and 2 is structured as follows. A technical lemma on finitely generated semigroups allows us to introduce a change of variable
$z=x^{\gamma}\,y$ which will allow us to reduce the problem to quasi-solved form. Then we show (Lemma \ref{le:quasi_solved_form_case}) that the solution is grid-based in this case. We also obtain in this case (Corollary \ref{co:solved_form_recursive_formula}) a recursive formula for the coefficients of the solution. Finally, the proofs of Theorems 1 and 2 follow.
{}

\begin{remark}\label{re:set_Sigma}
  The polynomial $\Psi(T)$ can be written
  $\Psi(T)=P_{0,e_0}+P_{0,e_1}\,T+\cdots + P_{0,e_n}\,T^{n}
  \in
  \mathbb{C}[T]$. Its degree $m$ is 
  the largest index such that the variable $Y_m$ appears effectively in
  the point $(0,1)$. If the equation is in quasi-solved form,
  $\Psi(T)$ is a nonzero polynomial
  because  $(0,1)\in \mathcal{C}(P)$.
  If $|q|\neq 1$,  then
  $\Sigma$ is finite. If case $|q|=1$ (and $q\neq 1$), then
  $\Sigma$ is the finite union of the sets
  $\Sigma_r=\frac{\arg(r)}{\arg(q)}+\frac{2\pi}{\arg(q)}\mathbb{Z}$, for
  those complex roots $r$ of  $\psi(T)$ with modulus $1$.
  Recall that we
  have fixed  a determination of the logarithm
  to compute $q^{\mu}$, hence $\arg(q)$ is also fixed. The following Lemma
  implies that $\Sigma_r\cap\mathbb{R}_{\geq 0}$ is
  {}contained in{}
  a finitely generated semigroup. Therefore $\Sigma^{+}$ generates a
  finitely generated semigroup of $\mathbb{R}_{\geq 0}$.
\end{remark}

\begin{lemma}\label{le:shiftFinitelyGeneratedSemigroups}
  Let $\gamma\in \mathbb{R}$ and $\gamma_1,\gamma_2,\ldots,\gamma_s$
  positive 
  real numbers. Then the
  semigroup  $\Gamma$ of $\mathbb{R}_{\geq 0}$ generated by the set
  $
  A=(\gamma+\gamma_1\mathbb{N}+\cdots+\gamma_s\mathbb{N})\cap
  \mathbb{R}_{\geq 0}
  $
  is finitely generated. 
\end{lemma}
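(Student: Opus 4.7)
My plan is to argue by Dickson's Lemma. Write $\Sigma := \gamma_1 \mathbb{N} + \cdots + \gamma_s \mathbb{N}$ for the (trivially) finitely generated sub-semigroup of $\mathbb{R}_{\geq 0}$ spanned by the $\gamma_i$, and observe that $A = (\gamma+\Sigma) \cap \mathbb{R}_{\geq 0}$. The constraint $\geq 0$ is vacuous when $\gamma \geq 0$ and only bites when $\gamma<0$.

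When $\gamma\geq 0$, $A=\gamma+\Sigma$ and so $\Gamma$ is visibly contained in the finitely generated semigroup spanned by $\{\gamma,\gamma_1,\ldots,\gamma_s\}$. For the genuine case $\gamma<0$, I would consider the set of exponent tuples
\[
 S \;:=\; \bigl\{\underline{a}\in \mathbb{N}^s : \gamma + a_1\gamma_1 + \cdots + a_s\gamma_s \geq 0\bigr\},
\]
which is upward-closed in $(\mathbb{N}^s,\preceq)$ because each $\gamma_i>0$. Dickson's Lemma then yields finitely many $\preceq$-minimal elements $\underline{a}^{(1)},\ldots,\underline{a}^{(k)}$ of $S$; setting $m_i:=\gamma+\sum_j a^{(i)}_j\gamma_j\in A$, every element of $A$ is of the form $m_i+\tau$ with $\tau\in\Sigma$. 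Hence $A\subseteq \{m_1,\ldots,m_k\} + \Sigma$, and $\Gamma$ is again contained in the finitely generated semigroup spanned by $\{m_1,\ldots,m_k,\gamma_1,\ldots,\gamma_s\}$.

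The main delicate step will be to upgrade this containment in a finitely generated overring to the conclusion that $\Gamma$ itself admits a finite generating set. The issue is that the auxiliary generators $\gamma_i$ need not a priori belong to $\Gamma$, so I will have to replace them by actual elements of $A$. I expect to accomplish this by adjoining to $\{m_1,\ldots,m_k\}$ a finite collection of further low-lying members of $A$—chosen so that every remaining $a\in A$ is expressible as a non-empty $\mathbb{N}$-sum drawn from this extended list—and then verifying the construction by induction on the magnitude of the residual $\tau \in \Sigma$ appearing in the decomposition $a = m_i + \tau$.
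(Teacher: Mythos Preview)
Your Dickson's Lemma reduction is exactly the paper's argument: the paper also considers the upward-closed set $\Lambda\subset\mathbb{N}^s$ of tuples with $\gamma+\sum n_i\gamma_i>0$, takes its finitely many $\preceq$-minimal elements, and asserts that $\Gamma$ is generated by $\gamma_1,\ldots,\gamma_s$ together with the corresponding minimal values.

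The ``delicate step'' you flag, however, cannot be carried out: the lemma as stated is in fact false. Take $s=1$, $\gamma=1$, $\gamma_1=\pi$ (so we are even in your ``easy'' case $\gamma\geq 0$). Then $A=\{1+n\pi:n\geq 0\}$ and $\Gamma=\{k+m\pi:k\geq 1,\ m\geq 0\}$. Since $1$ and $\pi$ are $\mathbb{Z}$-linearly independent, any sum of two nonzero elements of $\Gamma$ has rational part at least~$2$, so every $1+n\pi\in A$ is indecomposable in $\Gamma$; hence $\Gamma$ has infinitely many atoms and admits no finite generating set. Your plan to adjoin finitely many further elements of $A$ and induct on the residual $\tau$ therefore cannot succeed. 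The paper's own proof glosses over precisely this point---it lists $\gamma_1,\ldots,\gamma_s$ among the generators without checking that they lie in $\Gamma$---so what it really establishes is only that $\Gamma$ is \emph{contained in} a finitely generated semigroup of $\mathbb{R}_{\geq 0}$. That weaker conclusion is all that the later applications in the paper actually use (one only needs $\Gamma\cap[0,M]$ finite for every $M$), so you should regard your argument as complete at the containment stage and read the lemma with that interpretation.
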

\begin{proof}
  Let $\Lambda$ be the
  set of $(n_1,\ldots,n_s)\in \mathbb{N}^{s}$ such that $\gamma+\sum
  n_i\gamma_i>0$. By Dickson's lemma, the number of minimal elements
  in $\Lambda$ with respect the product order are finite. Hence
  $\Gamma$ is generated by $\gamma_1,\ldots,\gamma_s$ and  the family
  $\gamma+\sum n_i\gamma_i>0$ for all minimal element
  $(n_1,\ldots,n_s)$ of $\Lambda$. 
\end{proof}

{}We now introduce a change of variables which will allow us to simplify the exponents of the $x$ variable in an equation{}.
Let $P\in
\mathcal{R}_{\Gamma}[[Y]]$ and $\gamma>\mu_{-1}(P)$. 
Define $P[x^{\gamma}Y]$ as the series 
\begin{equation}\label{eq:changeOfVariableMshif}
  \sum_{\rho}q^{\gamma\,\omega(\rho)}\,x^{\gamma|\rho|}\,P_\rho(x)\, 
  Y_0^{\rho_0}Y_1^{\rho_1}\cdots  Y_n^{\rho_n}
  \in \mathbb{C}((x^{\mathbb{R}}))^{g}\,\,[[Y]].
\end{equation}
If $(\nu,0)$ is
the intersection point of $L(P;\gamma)$ with the $OX$-axis, then all
the coefficients of the series $P[x^{\gamma}Y]$ have order greater
than or
equal to $\nu$. Define
$\mshift{P}{\gamma}=x^{-\nu}P[x^{\gamma}Y]$. 
The coefficients of $\mshift{P}{\gamma}$ are in
$\mathcal{R}_{\Gamma^{*}}$, where $\Gamma^{*}$ is the semigroup of
$\mathbb{R}_{\geq 0}$ generated by
$(-\nu+\Gamma+\gamma\mathbb{N})\cap \mathbb{R}_{\geq 0}$. By 
Lemma~\ref{le:shiftFinitelyGeneratedSemigroups}, $\Gamma^{*}$ is a
finitely generated semigroup of $\mathbb{R}_{\geq 0}$.

The transformation $P\mapsto \mshift{P}{\gamma}$ corresponds to the 
change of variable $z=x^{\gamma}y$ in the following sense: for a
series $y$, with $\ord y>\gamma+\mu_{-1}(P)$, one has
$\mshift{P}{\gamma}[x^{-\gamma}y]= x^{-\nu}P[y]$, in particular,  $P[y]=0$ if and only if
$\mshift{P}{\gamma}[x^{-\gamma}y]=0$.

Let $\bar{\tau}(a,b)$ be
the plane affine map $\bar{\tau}(a,b)=(a-\nu+\gamma b,b)$, which satisfies
$\bar{\tau}(\mathcal{C}_j(P))=\mathcal{C}_j(\mshift{P}{\gamma})$ for
$0\leq j\leq n$. In particular,
$\bar{\tau}(\mathcal{N}(P))=\mathcal{N}(\mshift{P}{\gamma})$, and
$\bar{\tau}$ maps vertices to vertices and sides of co-slope $\mu\geq
\gamma$ to sides 
of co-slope $\mu-\gamma$. Moreover, 
$\bar{\tau}(L(P;\mu))=L(\mshift{P}{\gamma};\mu-\gamma)$,
in particular $\bar{\tau}(L(P;\gamma))=L(\mshift{P}{\gamma};0)$
is the vertical axis. Therefore, 
$Q(P;\mu)$ and $Q(\mshift{P}{\gamma};\mu-\gamma)$ have the same
ordinate. 
Let $\sum_{i=0}^{\infty}c_ix^{\mu_i}$ and $P_i$ be as in the definition
of pivot point (Definition~\ref{def:pivot_point}). Assume $\gamma<\mu_0$ and set
 $H=\mshift{P}{\gamma}$,
$H_0=H$ and $H_{i+1}=H_i[c_ix^{\mu_i-\gamma}+Y]$. 
It is straightforward to prove that $\mshift{P_i}{\gamma}=H_i$, so that
$\bar{\tau}(Q(P_i;\mu_i))=Q(H_i;\mu_i-\gamma)$ and, in particular, they
have the same ordinate. Then the image by $\bar{\tau}$ of  
the pivot point of $P$ with respect to $\sum
_{i=0}^{\infty}c_ix^{\mu_i}$  is the pivot
point of $\mshift{P}{\gamma}$ with respect to
$\sum_{i=0}^{\infty}c_ix^{\mu_i-\gamma}$ and the same holds for
relative pivot points.
By Proposition~\ref{pro:equivalencia_Procedure_PivotPoint}, this implies that 
$\sum_{i=0}^{\infty}c_ix^{\mu_i}$ satisfies $\NIC(P)$ if and only if
$\sum_{i=0}^{\infty}c_ix^{\mu_i-\gamma}$ satisfies $\NIC(\mshift{P}{\gamma})$.

Finally,  if $v\in \mathcal{C}(P)$ then
$\bar{\tau}(v)\in 
\mathcal{C}(\mshift{P}{\gamma})$ and 
$\Psi_{(\mshift{P}{\gamma};\bar{\tau}(v))}(T)=\Psi_{(P;v)}(q^{\gamma}T)$.

\begin{lemma}\label{le:reduction_to_solved_form}
  Assume that  $\psi(x)=\sum_{i=0}^{\infty}c_i\,x^{\mu_i}$ satisfies
  $\NIC(P)$. 
  Then there exist a finitely generated semigroup
  $\Gamma^{*}$, a 
  series 
  $P^{*}\in \mathcal{R}_{\Gamma^{*}}[[\Y]]$, an index $N$ and a
  rational number $\gamma$ with $\mu_{N-1}\leq \gamma<\mu_{N}$, such that
  the equation
  \begin{equation}
    \label{eq:reduction-solved-form}
    P^{*}[z]=0,\quad \ord z>0
  \end{equation}
  is in quasi solved form and 
  $\psi^*(x)=\sum_{i=N}^{\infty}c_i\,x^{\mu_i-\gamma}$ satisfies $\NIC(P^{*})$. 
\end{lemma}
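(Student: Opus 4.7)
The plan is to combine three operations: take a suitable partial derivative of $P$ to lower the height of the pivot point to one, translate by the initial Taylor sum $\psi_N(x):=\sum_{i<N}c_ix^{\mu_i}$, and apply the change of variable $z=x^{\gamma}y$ to slide the pivot onto the ordinate axis.

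Let $(a,h)=Q(P;\psi(x))$, which has $h\ge 1$ by Lemma~\ref{le:equivalencia_Procedure_PivotPoint}, and let $N$ be a step at which the pivot is reached. Choose $r'\in\mathbb{N}^{n+1}$ with $|r'|=h$ appearing effectively at the pivot, and set $r:=r'-e_j$ for some index $j$ with $r'_j\ge 1$. By Proposition~\ref{pro:reducion_altura_1}, $H:=\partial^{|r|}P/\partial Y^{r}$ is nonzero and has pivot point $(a,1)$ with respect to $\psi(x)$, reached by step $N$ as well (same reasoning as in the proof of that proposition, using that $\mathcal{C}_{r}(P_i)$ contains $(a,h)$ for $i\ge N$). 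By Lemma~\ref{le:equivalencia_Procedure_PivotPoint}, $\psi(x)$ satisfies $\NIC(H)$, so its tail $\sum_{i\ge N}c_ix^{\mu_i}$ satisfies $\NIC(\tilde H)$ where $\tilde H:=H[\psi_N(x)+Y]$; moreover $(a,1)$ is a vertex of $\mathcal{N}(\tilde H)$, being the highest point of $L(\tilde H;\mu_N)\cap\mathcal{N}(\tilde H)$. Observe that $\tilde H$ lies in $\mathcal{R}_{\tilde\Gamma}[[Y]]$ for $\tilde\Gamma$ the finitely generated semigroup generated by $\Gamma\cup\{\mu_0,\ldots,\mu_{N-1}\}$.

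Next I choose a rational $\gamma\in[\mu_{N-1},\mu_N)$ with $(a,1)\in L(\tilde H;\gamma)$, and set $P^{*}:=\mshift{\tilde H}{\gamma}$. The set of co-slopes $\mu$ for which $(a,1)\in L(\tilde H;\mu)$ is a closed interval whose right endpoint is $\ge\mu_N$ and whose left endpoint, call it $\gamma^{-}$, satisfies $\gamma^{-}<\mu_N$: otherwise $L(\tilde H;\mu_N)$ would contain a side of $\mathcal{N}(\tilde H)$ going strictly upward from $(a,1)$, so that the highest point of $L(\tilde H;\mu_N)\cap\mathcal{N}(\tilde H)$ would be above $(a,1)$, contradicting the definition of the pivot point. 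Since $\mu_{N-1}<\mu_N$ too, the interval $[\max(\gamma^{-},\mu_{N-1}),\mu_N)$ is nonempty and contains a rational number $\gamma$.

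For such $\gamma$, the affine map $\bar{\tau}(\alpha,\beta)=(\alpha-\nu+\gamma\beta,\beta)$ attached to $\mshift{\cdot}{\gamma}$, with $\nu=a+\gamma$ the abscissa at which $L(\tilde H;\gamma)$ meets the $OX$-axis, sends the vertex $(a,1)$ of $\mathcal{N}(\tilde H)$ to $(0,1)$, and therefore $(0,1)$ is a vertex of $\mathcal{N}(P^{*})$. The preimage of $(0,0)$ is $(a+\gamma,0)$, which lies strictly to the left of $(a+\mu_N,0)\in L(\tilde H;\mu_N)$ and hence outside the closed right half-plane of $L(\tilde H;\mu_N)$, showing $(0,0)\notin\mathcal{C}(\tilde H)$, i.e.\ $(0,0)\notin\mathcal{C}(P^{*})$. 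The transport of $\NIC$ from $\sum_{i\ge N}c_ix^{\mu_i}$ (with respect to $\tilde H$) to $\psi^{*}(x):=\sum_{i\ge N}c_ix^{\mu_i-\gamma}$ (with respect to $P^{*}$) is exactly the change-of-variable equivalence recorded in subsection~\ref{sec:changeOfVariableMshif}, and the finitely generated semigroup $\Gamma^{*}$ is furnished by Lemma~\ref{le:shiftFinitelyGeneratedSemigroups} applied to $-\nu+\tilde\Gamma+\gamma\mathbb{N}$. The principal obstacle is the strict inequality $\gamma^{-}<\mu_N$ that secures the existence of a suitable $\gamma$ in the prescribed range.
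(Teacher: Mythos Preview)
Your proof is correct and follows essentially the same three-step strategy as the paper: pass to a derivative $H$ so that the pivot has ordinate one, translate by the initial segment $\psi_N$, and then apply the shift $\mshift{\cdot}{\gamma}$ to bring the pivot onto the $OY$-axis. The only substantive difference is in how you secure a rational $\gamma\in[\mu_{N-1},\mu_N)$ with $(a,1)\in L(\tilde H;\gamma)$: the paper takes $N$ large enough that the pivot is already reached at step $N-1$, so that $Q\in L(H_N;\mu_{N-1})\cap L(H_N;\mu_N)$ directly and any $\gamma$ in $[\mu_{N-1},\mu_N)$ works; you instead argue abstractly that the left endpoint $\gamma^{-}$ of the admissible co-slope interval is strictly below $\mu_N$, which is also fine. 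One small slip: where you write ``showing $(0,0)\notin\mathcal{C}(\tilde H)$'' you mean ``showing $(a+\gamma,0)\notin\mathcal{C}(\tilde H)$'', from which $(0,0)\notin\mathcal{C}(P^{*})$ follows via $\bar\tau$.
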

\begin{proof}
  We may assume that the ordinate of the pivot point of
  $\psi(x)$ with respect to $P$ is
  $1$. Otherwise, by
  Proposition~\ref{pro:reducion_altura_1}, 
  we may replace $P$ by any of its derivatives
  $\frac{\partial^{|r|}P}{\partial Y^{r}}$, 
  where  the monomial $Y_j\,Y^{r}$
  appears effectively in the pivot point, for some $j$. 
  We remark that the
  coefficients of any derivative of $P$ also belong to
  $\mathcal{R}_{\Gamma}$. 
  Let $Q=(\alpha,1)$ be the pivot point of $P$ with respect to $\psi(x)$
  and use the notation of Definition~\ref{def:pivot_point}: 
  $P_0=P$, $P_{i+1}=P_i[c_ix^{\mu_i}+Y]$ and so on. In particular,
  let 
  the pivot point be  reached at step~$N'-1$ for some $N'$. 
  Consider any integer $N\geq N'$.
  Denote $\Gamma_0=\Gamma$ and
  $\Gamma_{i+1}=\Gamma_{i}+\mu_i\,\mathbb{N}$. Notice that the
  coefficients of 
  $P_i$ belong to $\mathcal{R}_{\Gamma_i}$.

  Let $\gamma$ be a rational number such that $\mu_{N-1}\leq\gamma<\mu_{N}$
  and set $P^{*}=\mshift{P_N}{\gamma}\in \mathcal{R}_{\Gamma_N^{*}}[[Y]]$. 
  Since the pivot point $Q$ has been
  reached at step $N-1$, 
  $Q\in  L(P_{N-1};\mu_{N-1})\cap L(P_N;\mu_N)$. By
  Lemma~\ref{le:NP_behaviour_under_translation},  $Q\in
  L(P_N;\mu_{N-1})$. Hence  $Q\in L(P_N;\mu_{N-1})\cap L(P_N;\mu_N)$;
  since 
  $\mu_{N-1}<\gamma<\mu_{N}$, we conclude that $Q(P_N;\gamma)=Q=(\alpha,1)$. 
  {}So, as the change of variables ~(\ref{eq:changeOfVariableMshif}) sends a point $(a,b)$ to $\tau(a,b)=(a-\nu+\gamma b,b)$ for the corresponding $\nu$, we get $\tau(\alpha,1)=(0,1)$ and{}
  the point $(0,1)$, {}so that{} is in $\mathcal{C}(P^{*})$, 
  the equation
  $P^{*}[y]=0$ is in quasi solved form and the pivot point of $P^{*}$
  with respect $\psi^*(x)$  is $(0,1)$. By
  {}Proposition{}~\ref{pro:equivalencia_Procedure_PivotPoint},
   $\psi^*(x)$ satisfies $\NIC(P^{*})$.
\end{proof} 

\begin{lemma}\label{le:quasi_solved_form_case}
  Assume
  equation~(\ref{eq:reduction-solved-form}) is in quasi-solved
  form and  let $\xi(x)=\sum_{i=0}^{\infty}c_ix^{\mu_i}$, with $\mu_0>0$,
  be a series 
  satisfying $\NIC(P^{*})$.  
  Then the support of $\xi(x)$ is contained in
  the finitely generated semigroup 
  $\Gamma'=\Gamma^{*}+\Sigma^{+}\,\mathbb{N}$. 
  In particular, either the support of
  $\xi(x)$ is finite or $\lim \mu_i=\infty$ and in both cases $\xi(x)$ is a
  solution of equation~(\ref{eq:reduction-solved-form}).
\end{lemma}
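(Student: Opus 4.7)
The plan is to prove the support inclusion $\supp\xi\subseteq\Gamma'$ by induction on $i$, and then derive the dichotomy and the solution statement from the discreteness of $\Gamma'$.

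First I would identify the pivot point of $P^*$ with respect to $\xi$. Under the quasi-solved hypothesis, $(0,1)$ is a vertex of $\mathcal{N}(P^*)$ lying on the lower-left boundary (since $(0,0)\notin\mathcal{C}(P^*)$), and a direct check with positive co-slope supporting lines shows it is the unique such vertex of ordinate at least one (any higher vertex $(0,k)$ on the $Y$-axis is ``shadowed'' by $(0,1)$ as seen from the lower-left). Lemma~\ref{le:equivalencia_Procedure_PivotPoint} applied to $\NIC(P^*)$ forces the pivot ordinate to be at least one, giving $Q(P^*;\xi)=(0,1)$. Part~(1) of Lemma~\ref{le:NP_behaviour_under_translation} then yields $(P^*_i)_{0,e_j}=(P^*)_{0,e_j}$ for every $i$ beyond the stabilization step and every $0\leq j\leq n$, so the indicial polynomial at $(0,1)$ remains $\Psi(T)$ throughout the iteration. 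Since $L(P^*_i;\mu_i)$ meets $\mathcal{N}(P^*_i)$ only at $(0,1)$ and possibly at the ordinate-zero point $(\mu_i,0)$, one obtains
\begin{displaymath}
  \Phi_{(P^*_i;\mu_i)}(C)\;=\;(P^*_i)_{\mu_i,0}\;+\;C\,\Psi(q^{\mu_i}).
\end{displaymath}

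The induction then goes as follows. Assume $\mu_j\in\Gamma'$ for every $j<i$ with $c_j\neq 0$. Iterating the translation formula~(\ref{eq:def-translacion-series}), I would write $P^*_i=P^*[y_i+Y]$ where $y_i=\sum_{j<i,\,c_j\neq 0}c_j\,x^{\mu_j}$ has support in $\Gamma'$. Because the coefficients of $P^*$ already lie in $\mathcal{R}_{\Gamma^*}\subseteq\mathcal{R}_{\Gamma'}$ and each $\sigma^k$ preserves supports, the composition produces coefficients of $P^*_i$ supported in $\Gamma'$; in particular $\supp(P^*_i)_0(x)\subseteq\Gamma'$. The $\NIC$ relation $c_i\,\Psi(q^{\mu_i})+(P^*_i)_{\mu_i,0}=0$ combined with $c_i\neq 0$ then forces either $\Psi(q^{\mu_i})=0$, whence $\mu_i\in\Sigma^+\subseteq\Gamma'$, or $(P^*_i)_{\mu_i,0}\neq 0$, whence $\mu_i\in\supp(P^*_i)_0\subseteq\Gamma'$.

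With $\supp\xi\subseteq\Gamma'$ in hand, Remark~\ref{re:Gamma_numerable} ensures $\Gamma'$ has no accumulation points in $\mathbb{R}_{\geq 0}$, so $\supp\xi$ is either finite or yields $\lim\mu_i=\infty$. In the latter case Lemma~\ref{le:equivalencia_Procedure_PivotPoint}(3) immediately gives that $\xi$ solves $P^*[y]=0$. In the former case, $\xi$ reduces to a finite partial sum $y_K$, and iterating the $\NIC$ relation at steps beyond $K$ (where $c_j=0$) forces $(P^*_K)_0(x)=0$, which is exactly $P^*[\xi]=0$. The main obstacle will be the first step: pinning down the pivot as $(0,1)$ requires careful case analysis when $\mathcal{N}(P^*)$ has a positive-co-slope side incident to $(0,1)$ (arising from ordinate-zero cloud points), together with handling the finitely many transient steps before $\mu_i$ enters the range in which the pivot has stabilized; in these transient steps one must separately verify that $\mu_i$ still lies in $\Gamma'$ by examining the sides incident to $(0,1)$ whose co-slopes come from $\supp (P^*)_0\subseteq\Gamma^*$.
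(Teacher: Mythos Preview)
Your approach is essentially the same as the paper's: establish that $Q(P^*_i;\mu_i)=(0,1)$, read off the relation $\Psi(q^{\mu_i})c_i+(P^*_i)_{\mu_i,\underline{0}}=0$, and then induct on $i$ to get $\mu_i\in\Gamma'$. The induction step and the final appeal to discreteness of $\Gamma'$ match the paper exactly.

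The only real difference is your first step, and here you make things harder than necessary. You argue via the \emph{pivot point} (the stabilized value of $Q(P^*_i;\mu_i)$) and then flag ``transient steps'' before stabilization as an obstacle requiring separate side-by-side analysis. In fact there are no transient steps: the paper proves directly that $Q(P^*_i;\mu_i)=(0,1)$ for \emph{every} $i\geq 0$. The argument is a one-line induction. Since $P^*_i\in\mathcal{R}[[Y]]$, its Newton polygon sits in the first quadrant; and by Lemma~\ref{le:NP_behaviour_under_translation}(1), the vertex $(0,1)$ persists in $\mathcal{N}(P^*_i)$ once you know $Q(P^*_{i-1};\mu_{i-1})=(0,1)$. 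With those two facts in hand, the dichotomy you yourself identified for $P^*$ (a positive co-slope supporting line either passes through $(0,1)$ or touches only ordinate-zero points) applies verbatim to each $P^*_i$, and the ordinate-zero alternative is killed by $\NIC$ via Corollary~\ref{co:DefinicionPivotPoint}. So the pivot is reached at step~$0$, and your proposed workaround of ``examining the sides incident to $(0,1)$'' is unnecessary. Once you see this, your write-up and the paper's coincide.
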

\begin{proof}
  Let $P_0=P^{*}$ and $P_{i+1}=P_i[c_ix^{\mu_i}+Y]$ for $i\geq 0$.
  We first prove that $Q(P_i;\mu_i)=(0,1)$ for all $i\geq 0$. We do this
  showing, by induction on $i$, that 
  $\mathcal{N}(P_i)$ is contained into the first 
  quadrant of the plane and that the point $(0,1)\in
  \mathcal{C}(P_i)$. 
  This holds for $P_0$ because of 
  the hypotheses on $P^{*}$. Assume that the statement 
  holds for $P_i$. Since $\mu_i>0$, the line 
  $L(P_i;\mu_i)$ either contains the point $(0,1)$, and then
  $Q(P_i;\mu_i)=(0,1)$,  or  
  $L(P_i;\mu_i)$ meets $\mathcal{N}(P_i)$ at a single point with zero
  ordinate which is $Q(P_i;\mu_i)$. If the
  latter happens, from Corollary~\ref{co:DefinicionPivotPoint},
  we infer that the pivot point of $P^{*}$ with respect to $\xi(x)$
  has zero ordinate, in 
  contradiction with the fact that $\xi(x)$ satisfies $\NIC(P^{*})$. 
  Hence $Q(P_i;\mu_i)=(0,1)$. By
  Lemma~\ref{le:NP_behaviour_under_translation}, $(0,1)$ is  a vertex of 
  $\mathcal{N}(P_{i+1})$ and since $P_{i+1}\in \mathcal{R}[[Y]]$, its Newton polygon
  is contained in the first quadrant. This proves the induction step
  and  that  $Q(P_i;\mu_i)=(0,1)$,~$i\geq 0$. 

  The fact that $Q(P_i;\mu_i)=(0,1)$ implies that the polynomial
  $\Phi_{(P_i;\mu_i)}(C)$ is equal to 
  $\Psi(q^{\mu_i})C+\Coeff(P_i;x^{\mu_i}\,Y^{\underline{0}})$,
  where $\Psi(T)$ is the indicial polynomial of $P$ at~$(0,1)$
  and $\Coeff(P_i;x^{\mu_i}\,Y^{\underline{0}})$ is the coefficient
  of $x^{\mu_i}\,Y_0^{0}Y_1^{0}\cdots Y_n^{0}$
  in $P_i$. 
  Since $\Phi_{(P_i;\mu_i)}(c_i)=0$ because $\xi(x)$
  satisfies $\NIC(P^{*})$, the following equations hold:
  \begin{equation}
    \label{eq:ecuacion_recursiva:1}
    \Psi(q^{\mu_i})\,c_i+\Coeff(P_i;x^{\mu_i}\,Y^{\underline{0}})=0,\quad
    i\geq 0.
  \end{equation}
  Let us prove, by induction, that $P_i\in
  \mathcal{R}_{\Gamma'}[[Y]]$, for all $i\geq 0$,
  and that the support of $\xi(x)$ is contained in $\Gamma'$. By
  hypothesis, $P_0\in \mathcal{R}_{\Gamma'}[[Y]]$. Assume that $P_i\in
  \mathcal{R}_{\Gamma'}[[Y]]$. If $c_i=0$, then $P_{i+1}=P_i\in
  \mathcal{R}_{\Gamma'}[[Y]]$ and $\mu_i\not\in\supp(\xi(x))$. If, on
  the contrary,
  $c_i\neq 0$, we can prove by contradiction that $\mu_i\in
  \Gamma'$: assume that $\mu_i\not \in \Gamma'$, in particular
  $\mu_i\not\in \Sigma^{+}$, hence $\Psi(q^{\mu_i})\neq 0$. From
  equation~(\ref{eq:ecuacion_recursiva:1}),
  $\Coeff(P_i;x^{\mu_i}\,Y^{\underline{0}})\neq 0$, {}and
  therefore{}
  $\mu_i\in \supp((P_i)_{\underline{0}})\subseteq \Gamma'$. So
  $P_{i+1}=P_i[c_ix^{\mu_i}+Y]$ belongs to
  $\mathcal{R}_{\Gamma'}[[Y]]$ which proves the induction step. 

  The set $\supp \xi(x)$  has no accumulation points in
  $\mathbb{R}$ because 
  $\Gamma'$ is a finitely generated semigroup of
  $\mathbb{R}_{\geq 0}$ and $\supp \xi(x)\subseteq \Gamma'$ and we are done.
\end{proof}

\begin{corollary}\label{co:solved_form_recursive_formula}
  Let $y$ be a solution of equation
  (\ref{eq:reduction-solved-form}) which is in quasi solved form. Let
  $\Gamma'=\{\gamma_i\}_{i=0}^{\infty}$, with $\gamma_i<\gamma_{i+1}$
  for all $i$. Then $y=\sum_{i=1}^{\infty} d_i\,x^{\gamma_i}$ with
  $d_i$ satisfying the following recurrent formula:
  \begin{equation}
    \label{eq:recurrent_formula}
    \Psi(q^{\gamma_i})\,d_i=-\Coeff(P^{*}[d_1x^{\gamma_1}+\cdots+d_{i-1}x^{\gamma_{i-1}}];
    x^{\gamma_i}), \quad i\geq 1.
  \end{equation}
  If $\Sigma^{+}$ is finite and $z$ is another solution
  of equation (\ref{eq:reduction-solved-form}) with $\ord
  (y-z)$ greater than any element of $\Sigma^{+}$, then $y=z$.
\end{corollary}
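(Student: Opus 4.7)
The plan is to read off this corollary from Lemma~\ref{le:quasi_solved_form_case}. First I would let $\xi(x)$ denote the first $\omega$ terms of $y$; by Corollary~\ref{co:corolario-lema-clave} it satisfies $\NIC(P^{*})$, and since $\ord y>0$ and $P^{*}$ is in quasi-solved form, Lemma~\ref{le:quasi_solved_form_case} yields $\supp \xi(x)\subseteq \Gamma'$. Because $\Gamma'$ is finitely generated in $\mathbb{R}_{\geq 0}$, Remark~\ref{re:Gamma_numerable} forces $y=\xi(x)$, so one may write $y=\sum_{i\geq 1}d_i x^{\gamma_i}$ using the enumeration of $\Gamma'$ (the index $\gamma_0=0$ is dropped because $\ord y>0$), with possibly some $d_i=0$.

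Next I would extract the recurrence from equation~(\ref{eq:ecuacion_recursiva:1}) in the proof of Lemma~\ref{le:quasi_solved_form_case}. Setting $y_{i-1}=\sum_{j<i}d_j x^{\gamma_j}$ and $P_i^{*}=P^{*}[y_{i-1}+Y]$, the constant term (in $Y$) of $P_i^{*}$ is $P^{*}[y_{i-1}]$, so for indices with $d_i\neq 0$ the $\NIC$ condition translates directly into
\[
\Psi(q^{\gamma_i})\,d_i+\Coeff(P^{*}[y_{i-1}];x^{\gamma_i})=0,
\]
which is~(\ref{eq:recurrent_formula}). For those $\gamma_i$ with $d_i=0$ the same identity is immediate: expanding $P^{*}[y_{i-1}+\eta]=0$ with $\eta=y-y_{i-1}$ of order strictly greater than $\gamma_i$ and using~(\ref{eq:def-translacion-series}), all terms of total $Y$-degree at least one contribute in order strictly greater than $\gamma_i$, so the coefficient of $x^{\gamma_i}$ in $P^{*}[y]$ equals $\Coeff(P^{*}[y_{i-1}];x^{\gamma_i})$, which must therefore vanish.

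For the uniqueness statement I would assume $\Sigma^{+}$ is finite and consider another solution $z$. Applying the first part to $z$, its coefficients $\tilde d_i$ also satisfy~(\ref{eq:recurrent_formula}). Let $k$ be the smallest index with $d_k\neq \tilde d_k$; the right-hand sides for $d_k$ and $\tilde d_k$ agree (they depend only on the earlier coefficients), so $\Psi(q^{\gamma_k})(d_k-\tilde d_k)=0$. Since $\gamma_k=\ord(y-z)>\max\Sigma^{+}$ places $\gamma_k$ outside $\Sigma^{+}$, one has $\Psi(q^{\gamma_k})\neq 0$, contradicting $d_k\neq \tilde d_k$. The proof is essentially bookkeeping on top of Lemma~\ref{le:quasi_solved_form_case}; the one mildly delicate point is covering those $\gamma_i$ not appearing among the nonzero exponents of $y$, which is handled by the expansion argument above.
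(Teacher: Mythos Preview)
Your argument is correct and follows essentially the same route as the paper: reduce to $y=\xi(x)$ via Remark~\ref{re:Gamma_numerable}, then extract the recurrence from equation~(\ref{eq:ecuacion_recursiva:1}), and deduce uniqueness from $\Psi(q^{\gamma_i})\neq 0$ once $\gamma_i>\max\Sigma^{+}$. The only cosmetic difference is your explicit treatment of the indices with $d_i=0$ by the expansion argument, whereas the paper simply invokes ``the same reasoning as in Lemma~\ref{le:quasi_solved_form_case}'' for the full enumeration $(\gamma_i)_{i\geq 1}$---which is justified because, by Lemma~\ref{le:equivalencia_Procedure_PivotPoint}, the representation $y=\sum_{i\geq 1} d_i x^{\gamma_i}$ (with possibly zero $d_i$ and $\lim\gamma_i=\infty$) itself satisfies $\NIC(P^{*})$.
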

\begin{proof}
  Let $\xi(x)$ be the first $\omega$ terms of $y$. Then $\supp
  \xi(x)\subseteq \Gamma'$, and by Remark~\ref{re:Gamma_numerable},
  $y=\xi(x)\in \mathcal{R}_{\Gamma'}$. Hence we may write 
  $y=\sum _{i=1}^{\infty} d_i x^{\gamma_i}$ because $\gamma_0=0$ and
  $\ord y>0$. Since $\xi(x)$ satisfies $\NIC(P^{*})$, the same
  reasoning as in Lemma~\ref{le:quasi_solved_form_case} up to
  equation~(\ref{eq:ecuacion_recursiva:1}) holds. The coefficient
  $\Coeff(P_i;x^{\gamma_i}Y^{\underline{0}})$ is equal to the
  coefficient of $x^{\gamma_i}$ in
  $P^{*}[d_1x^{\gamma_1}+\cdots+d_{i-1}x^{\gamma_{i-1}}]$, which gives
  equation~(\ref{eq:recurrent_formula}). 
  To prove the last statement, write $z=\sum _{i=1}^{\infty} d'_i
  x^{\gamma_i}$. 
  If $\gamma_{i}$ is
  greater than any element of $\Sigma^{+}$, then
  $\Psi(q^{\gamma_i})\neq 0$, and $d_i$ is completely determined
  by $d_1,\ldots,d_{i-1}$, so that~$y=z$. 
\end{proof}

\begin{proof}[Proof of Proposition~\ref{pro:NIC->solucion}]
  Applying Lemma~\ref{le:reduction_to_solved_form} to
  $\psi(x)$ we obtain equation~(\ref{eq:reduction-solved-form}), and
  applying 
  Lemma~\ref{le:quasi_solved_form_case} to  $\xi(x)=
  \sum_{i=N}^{\infty}c_ix^{\mu_i-\gamma}$ we conclude that $\mu_i-\gamma\in
  \Gamma'$, for $i\geq N$. 
  Since $\gamma\geq \mu_0$, the set $(\gamma-\mu_0)+\Gamma'$ is
  included in $\mathbb{R}_{\geq 0}$. 
  Let $\Gamma''$ be the semigroup generated by
  $(\gamma-\mu_0)+\Gamma'$. By
  Lemma~\ref{le:shiftFinitelyGeneratedSemigroups}, $\Gamma''$ is finitely
  generated. Let $\Gamma'''$ be the finitely generated semigroup
  $\Gamma''+\sum_{i=0}^{N-1} (\mu_i-\mu_0)\,\mathbb{N}$. 
  The set $\supp
  \psi(x)$ is contained in 
  $\mu_0+\Gamma'''$, so that $\lim \mu_i=\infty$.
  By Proposition~\ref{pro:equivalencia_Procedure_PivotPoint},
  $\psi(x)$ is a solution of $P[y]=0$.
\end{proof}

\begin{proof}[Proof of
  Theorem~\ref{th:solutions_are_grid_based}]
  Let $\psi(x)=\sum_{i=0}^{\infty}c_ix^{\mu_i}$ be the first $\omega$ terms
  of $y$. By 
  Corollary~\ref{co:corolario-lema-clave}, $\psi(x)$ satisfies
  $\NIC(P)$.  As in the proof of Proposition~\ref{pro:NIC->solucion},
  there exists a finitely generated semigroup $\Gamma$ such that 
  $\supp \psi(x)$ is contained in $\mu_0+\Gamma$.
  By Remark~\ref{re:Gamma_numerable}, $y=\psi(x)$, so that
  $y$ is  grid-based.
\end{proof}

\begin{proof}[Proof of Theorem~\ref{th:solutions_are_finitely_determined}]
  Let $y$  be a solution of
  equation~(\ref{eq:q-difference-equation-intro}). By
  Theorem~\ref{th:solutions_are_grid_based}, $y$ coincides with its
  first $\omega$ terms. Write
  $y=\sum_{i=0}^{\infty} c_i x^{\mu_i}$ and let $Q=(\alpha,h)$ be the
  pivot point 
  of $P$ with respect to $y$. 
  Apply
  Lemmas~\ref{le:reduction_to_solved_form} and
  \ref{le:quasi_solved_form_case} to $y$:  
  let $N$ and $\gamma$ be  as in
  Lemma~\ref{le:reduction_to_solved_form}; 
  we may assume that the
  pivot point $Q$ is reached at step $N-1$.  
  Since $|q|\neq 1$, 
  $\Sigma^{+}$ is finite  by Remark \ref{re:set_Sigma}.  Since $\lim
  \mu_i=\infty$, there is $k>N$ such 
  that $\mu_k-\gamma$ is greater than any element of~$\Sigma^{+}$.  

  Consider $z\in \mathbb{C}((x^{\mathbb{R}}))$ with  
  the same first $k$ terms as $y$ and satisfying  
that for any 
  $H=\frac{\partial^{|r|} P}{\partial Y^{r}}$, with $|r|\leq h$,
  $H[y]=0$ if and only if $H[z]=0$.  We have to show that $y=z$. 

  Since $P[y]=0$, then  $P[z]=0$, 
  and $z$ coincides with its first $\omega$
  terms. Write
  $z=\sum_{i=0}^{\infty} d_i x^{\delta_i}$. By hypothesis, $c_i=d_i$ and
  $\mu_i=\delta_i$ for $0\leq i<k$. Denote $P'_0=P$,
  $P'_{i+1}=P_i[d_ix^{\delta_i}+Y]$ and $P_0=P$
  and $P_{i+1}=P_i[c_ix^{\mu_i}+Y]$, for $i\geq 0$. 
  Obviously, $P_i=P'_i$, for $0\leq
  i \leq k$. In particular $Q=Q(P_N;\mu_N)=Q(P'_N;\delta_N)$.

  If the pivot point of $P$ with respect to $z$
  is also $Q$,
  then apply Lemmas~\ref{le:reduction_to_solved_form}
  and~\ref{le:quasi_solved_form_case} to $z$ in the same way as to
  $y$: choose the same derivative $\frac{\partial^{h-1} P}{\partial
    Y^{r}}$, the same $N$ and the same $\gamma$ to obtain the same
  $P^{*}$.
  This can be done because  $P_i=P'_i$, for $0\leq i \leq k$.
  This implies that
  $\xi(x)=\sum_{i=N}^{\infty} {c_i}x^{\mu_i-\gamma}$ and
  $\bar{\xi}(x)=\sum_{i=N}^{\infty} {d_i}x^{\delta_i-\gamma}$ both
  satisfy $\NIC(P^{*})$. By 
  Corollary~\ref{co:solved_form_recursive_formula},
  $\xi(x)=\bar{\xi(x)}$, which implies $y=z$.

  Let us show by contradiction that the pivot point $Q'$ of $P$ with
  respect to $z$ must be $Q$. Assume $Q'\neq Q$.
  {}
  The point $Q'$ is the
stabilization point of the sequence $Q(P'_i;\delta_i)$. On the other hand, $Q$ 
  belongs to this sequence because $Q=Q(P_N;\mu_N)=Q(P'_N;\delta_N)$.
  Corollary~\ref{co:DefinicionPivotPoint} implies that
  either $Q=Q'$ or otherwise
  their ordinates satisfies $h>h'$. Hence $h\geq h'$.{}
  Let $Y^{r}$ be a monomial that
  appears effectively in the pivot point of $P$ with respect to $z$,
  so that $|r|=h'$. Let $H=\frac{\partial
    ^{h'} P}{\partial Y^{r}}$. By 
  Proposition~\ref{pro:reducion_altura_1}, $H[z]\neq 0$; in particular
  $H\neq 0$. We claim that
  $H[y]=0$. By Remark~\ref{re:bien_definido_PivotPointRelativo}, the
  pivot point $Q_r$ of $P$ with respect to $y$ relative to $Y^{r}$ is
  well-defined. Since $\lim \mu_i=\infty$, by
  Lemma~\ref{le:relativePivotPoints}, the ordinate of $Q_r$ is
  $h''\geq 
  h$.  The pivot
  point of $H$ with respect to $y$ has ordinate 
  $h''-h'\geq h-h'\geq 1$. By Proposition~\ref{pro:equivalencia_Procedure_PivotPoint},
  $y$ satisfies $\NIC(P)$ and so $H[y]=0$, which proves our claim and
  finishes 
  the proof the the Theorem.
\end{proof}

\subsection{Bounding the rational rank in order and degree
  $1$}
\pedro{} In general, it is nice to know a priori how complex a
solution of an equation can be. Following Seidenberg \cite{Seidenberg-1968}, one can deduce that if $s(x)$ is a solution of a differential equation
$P=A(x,y)+B(x,y)y^{\prime}$ of order and degree $1$ with
$A(x,y),B(x,y)\in \mathbb{C}[[x,y]]$, then its support is included
in the $\mathbb{Q}$-vector space $\mathbb{Q} + \alpha \mathbb{Q}$, for
some $\alpha\in \mathbb{R}$. Morally speaking, one can only have a
single irrational exponent (and its $\mathbb{Q}$-span) in $s(x)$. We
pose here the same question (in all its generality, allowing $P$ to
have exponents in a finitely generated semigroup) and reach the
equivalent conclusion: the dimension of the vector space generated by
the support of a solution $s(x)$ is at most $1$ plus the dimension of
the vector space generated by the support of $P$.  \pedro{}

Recall that the rational rank of a semigroup $S\subseteq \mathbb{R}$
is the dimension of $\vsg{S}$, the  $\mathbb{Q}$-vectorial subspace of
$\mathbb{R}$ generated by $S$. It is denoted $\RatRk(S)$.

In what follows $\Gamma$ denotes a finitely generated semigroup of
$\mathcal{R}_{\geq 0}$, as above.

\begin{theorem}\label{th:bound-rati-rank} Assume $|q| \neq 1$.
  Let $P=A(Y_0)+B(Y_0)Y_1$ be a nonzero series, where $A,B\in
  \mathcal{R}_{\Gamma}[[Y_0]]$. Let $y$ be a solution of $P[y]=0$,
  with $\ord y > \mu_{-1}(P)$. Then $\RatRk(\supp y)\leq\RatRk(\Gamma)+1$.
\end{theorem}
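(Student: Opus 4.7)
The strategy is to combine the reduction to quasi solved form (Lemmas~\ref{le:reduction_to_solved_form} and~\ref{le:quasi_solved_form_case}) with the observation that the ``linear in $Y_1$'' structure of $P$ survives all the transformations involved in that reduction.

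By Theorem~\ref{th:solutions_are_grid_based}, $y$ is grid-based; write its first-$\omega$ terms expansion $y=\sum_{i\geq 0}c_i\,x^{\mu_i}$, which by Corollary~\ref{co:corolario-lema-clave} satisfies $\NIC(P)$. Let $Q=(a,h)$ be the pivot point of $P$ with respect to $y$. The only monomials of total degree $h$ appearing in $P=A(Y_0)+B(Y_0)Y_1$ are $Y_0^h$ and $Y_0^{h-1}Y_1$, so $r'=h\,e_0$ may be taken as the exponent of an effective monomial at $Q$. Choosing $r=(h-1)e_0$, Proposition~\ref{pro:reducion_altura_1} yields the derivative
\begin{displaymath}
H=\frac{\partial^{h-1}P}{\partial Y_0^{h-1}}=A^{(h-1)}(Y_0)+B^{(h-1)}(Y_0)Y_1\in\mathcal{R}_\Gamma[[Y_0,Y_1]],
\end{displaymath}
still of the form $\tilde A+\tilde B\,Y_1$, for which $H[y]=0$ and whose pivot point with respect to $y$ has ordinate one. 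Replacing $P$ by $H$, we assume henceforth $h=1$.

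Now apply Lemma~\ref{le:reduction_to_solved_form} to obtain a finitely generated semigroup $\Gamma^*$, a series $H^*\in\mathcal{R}_{\Gamma^*}[[Y]]$ in quasi solved form, an index $N$, and a rational number $\gamma$ with $\mu_{N-1}\leq\gamma<\mu_N$, such that $y^*(x)=\sum_{i\geq N}c_i\,x^{\mu_i-\gamma}$ satisfies $\NIC(H^*)$. Direct inspection of the formulas shows that both the translation $P_i\mapsto P_i[c_ix^{\mu_i}+Y]$ and the shift $P_N\mapsto\mshift{P_N}{\gamma}$ preserve the form $\tilde A(Y_0)+\tilde B(Y_0)Y_1$, so $H^*$ is also linear in $Y_1$. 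Hence its indicial polynomial at $(0,1)$ is
\begin{displaymath}
\Psi(T)=H^*_{0,e_0}+H^*_{0,e_1}\,T,
\end{displaymath}
of degree at most one in $T$. Because $|q|\neq 1$, the map $\mu\mapsto q^\mu$ is injective on $\mathbb{R}$, so the set $\Sigma^+=\{\mu>0:\Psi(q^\mu)=0\}$ has at most one element, call it $\mu^*$. Lemma~\ref{le:quasi_solved_form_case} then gives $\supp y^*\subseteq\Gamma^*+\Sigma^+\mathbb{N}$, whence $\vsg{\supp y^*}\subseteq\vsg{\Gamma^*}+\mathbb{Q}\mu^*$.

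Since $\supp y\subseteq\{\mu_0,\ldots,\mu_{N-1}\}\cup(\gamma+\supp y^*)$ with $\gamma\in\mathbb{Q}$, it remains to establish the inclusion
\begin{displaymath}
\vsg{\Gamma^*}+\mathbb{Q}\gamma+\vsg{\mu_0,\ldots,\mu_{N-1}}\subseteq\vsg{\Gamma}+\mathbb{Q}\mu^*,
\end{displaymath}
which is the main technical obstacle. By the quantifier elimination of formul\ae~(\ref{eq:nec_cond_lados})--(\ref{eq:nec_cond_vertices}), each pre-stabilization exponent $\mu_i$ is either the co-slope of a side of $\mathcal{N}(P_i)$---in which case $\mu_i\in\vsg{\Gamma_i}$---or a root of a vertex indicial polynomial of $P_i$, which is again of degree at most one in $T$. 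The crucial point is that after a vertex-root step the next step cannot be a vertex-root step at the same vertex: the unique root of the (unchanged) degree-one polynomial would force $\mu_{i+1}=\mu_i$, contradicting $\mu_{i+1}>\mu_i$. Inductive bookkeeping on $i$---using that side co-slopes at step $i$ lie in $\vsg{\Gamma_i}=\vsg{\Gamma}+\mathbb{Q}\mu_0+\cdots+\mathbb{Q}\mu_{i-1}$, and identifying the at-most-one transcendental direction accumulated through vertex-root steps with the $\mu^*$ of the final quasi solved form---yields the inclusion above and therefore $\RatRk(\supp y)\leq\RatRk(\Gamma)+1$.
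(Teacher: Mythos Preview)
Your overall strategy---observing that the ``linear in $Y_1$'' shape is preserved by translations and shifts, and deducing that the indicial polynomial in quasi-solved form has degree at most one---is sound and matches the paper's. But the final paragraph, where the real work lies, is not a proof: it is a summary of what remains to be done.

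The decisive point you leave unjustified is the claim that there is \emph{at most one} vertex-root step among the pre-stabilisation exponents $\mu_0,\ldots,\mu_{N-1}$. Your remark that ``the next step cannot be a vertex-root step at the same vertex'' is both vaguely stated and insufficient: after a vertex-root step at a vertex $v$ of ordinate $k$, the point $Q(H_{i+1};\mu_{i+1})$ is in general a \emph{different} vertex of $\mathcal{N}(H_{i+1})$, and nothing you have written excludes a fresh vertex-root step there. What is actually needed (and what the paper proves in Remark~\ref{re:rango_uno}) is the explicit computation: writing the terms at $v$ as $M=Ax^aY_0^k+Bx^aY_0^{k-1}Y_1$, the translation $M[cx^{\mu_i}+Y]$ produces $c^{k-1}x^{a+\mu_i(k-1)}(AY_0+BY_1)$ plus higher-degree terms, so a vertex at ordinate~$1$ appears on $L(H_i;\mu_i)$ with the \emph{same} indicial polynomial $A+BT$. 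Since $|q|\neq 1$, this polynomial does not vanish at $q^\mu$ for any $\mu>\mu_i$, and since the pivot point of $H$ has ordinate~$1$, the sequence $Q(H_j;\mu_j)$ stabilises immediately at this new vertex, forcing every subsequent $\mu_j$ to be a side co-slope. Without this computation your ``inductive bookkeeping'' has no inductive step.

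A second issue is your proposed identification of the single transcendental $\mu_i$ (coming from the vertex-root step) with the $\mu^*$ of the quasi-solved form. These differ by the rational shift $\gamma$, so one has $\mu^*=\mu_i-\gamma$; to collapse them into a single direction over $\vsg{\Gamma}$ you implicitly need $\mathbb{Q}\subseteq\vsg{\Gamma}$, which is not assumed. The paper sidesteps this by working directly with the $\Gamma_j$ and never invoking $\mu^*$ at all: once the vertex-root step is shown to be unique, the recursion $\Psi(q^{\mu_j})c_j=-\coeff(\ldots)$ forces $a'+\mu_j\in\Gamma_j$, giving $\mu_j\in\vsg{\Gamma_{i+1}}$ for all $j>i$ by a short induction. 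Your detour through Lemma~\ref{le:reduction_to_solved_form} does not avoid this argument; it merely relabels it.
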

{}The inequality can be strict, as witness the equation $P=Y_1-q^{\pi}Y_0$, which has as solution $y(x)=x^{\pi}$.
{}
\begin{proof}
  By the previous results, $y$ coincides with its first $\omega$ terms
  $\psi(x)=\sum_{i=0}^\infty c_i \,x^{\mu_i}$. 
 Taking a rational $\gamma<\mu_0$ and replacing $P$ by
 $\mshift{P}{\gamma}$ we may assume that $\mu_0>0$ and
 that $\mshift{P}{\gamma}\in \mathcal{R}_{\Gamma^{*}}$ and
 $\RatRk(\Gamma^{*})=\RatRk(\Gamma)$, for another finitely generated
 semigroup $\Gamma^{*}$. 
 
  Define $P_0=P$,
   $P_{i+1}=P_i[c_ix^{\mu_i}+Y]$, $\Gamma_0=\Gamma^{*}$ and $\Gamma_{i+1}=\Gamma_i+\mu_i\mathbb{N}$. 
The coefficients of $P_i$ belong
  to $\mathcal{R}_{\Gamma_i}$.  
 Notice that one has
  $\dim \vsg{\Gamma_{i+1}}\leq \dim \vsg{\Gamma_i}+1$ and the
  inequality holds only if $\mu_i\not \in\vsg{\Gamma_{i}}$.

  For each {}index{} $i$, the line $L(P_i;\mu_i)$ corresponds
  either to a vertex or to a side of $\mathcal{N}(P_i)$. If it
  corresponds to a side, then there are two different points $(\alpha,a)$
  and $(\beta,b)$ in
  $\mathcal{C}(P_i)$ lying on $L(P_i;\mu_i)$. 
  This implies that $\alpha,\beta\in
  \Gamma_i${}, therefore{}
  $\mu_i=(\beta-\alpha)/(a-b)\in \vsg{\Gamma_i}${}, and
  $\vsg{\Gamma_i}=\vsg{\Gamma_{i+1}}$.{}
Hence it is enough to prove that if for an index $i$,
   $\mu_i$ corresponds to a vertex of $\mathcal{N}(P_i)$, then 
   for all $j>i$, $\mu_j$ corresponds to a side of $\mathcal{N}(P_j)$.
    this holds, iterating the above argument, we get
   $\dim\vsg{\Gamma_0}=\dim\vsg{\Gamma_i}$ and 
     $ \dim\vsg{\Gamma_{i+1}}=\dim\vsg{\Gamma_j}$, for $j>i$, which completes the proof because we have:
     \begin{displaymath}
            \dim \vsg{\cup_{j=0}^{\infty} \Gamma_j}=\dim\vsg{\Gamma_{i+1}}\leq
            1+\dim\vsg{\Gamma_i}=
            1+\dim\vsg{\Gamma_0}.
          \end{displaymath}
          
          {}
          We now prove the statement above. Assume that for the index $i$,
     $\mu_{i}$ corresponds to a vertex  $v=(a,h)$
  of $\mathcal{N}(P_i)$.
By Corollary~\ref{co:corolario-lema-clave},
we have that $\Phi_{(P_i;\mu_i)}(c_i)=0$. 
Applying next Lemma~\ref{re:rango_uno} to $P_i$ we obtain that
$v'=(\nu-h,1)$ is a vertex of $\mathcal{N}(P_{i+1})$
and that $\Psi_{(P_{i+1};v')}(q^{\mu})\neq 0$, for $\mu> \mu_i$.
By Lemma~\ref{le:NP_behaviour_under_translation},
$\mathcal{N}(P_{i+1})$ is contained in the closed right half-plane
defined by $L(P_i;\mu_i)$. Since $v'\in L(P_i;\mu_i)$ and
$\mu_{i+1}>\mu_i$, then the point $Q(P_{i+1};\mu_{i+1})$ is either
$v'$ or a point with zero ordinate. The last possibility would be in
contradiction with the fact that $\psi(x)$ is a solution of $P=0$
and Proposition~\ref{pro:equivalencia_Procedure_PivotPoint}, hence
$Q(P_{i+1};\mu_{i+1})=v'$ and is ordinate is $1$. But this implies that
$Q(P_{i+1};\mu_{i+1})$ is the
pivot point of $P$ with respect to $\psi(x)$, from which follows that for $j>i$,
$Q(P_j;\mu_j)=v'$ and $\Psi_{(P_j;v')}=\Psi_{(P_{i+1};v')}$ (see
Definition~\ref{def:pivot_point} and the subsequent paragraph).

Let us prove, finally, that for any $j>i$,
$\mu_j$  corresponds to a side of
$\mathcal{N}(P_j)$. Were this not the case, for some $j>i$,
$\mu_j$ would correspond either to a vertex $v'$
or to a different vertex with zero ordinate, and both possibilities are absurd:
\begin{itemize}
\item If $\mu_j$ corresponds to the vertex $v'$ then
$$\Phi_{(P_j;\mu_j)}(c_j)=\Psi_{(P_j;v')}(q^{\mu_j})\,c_j
=\Psi_{(P_{i+1};v')}(q^{\mu})\,c_j\neq 0$$
which contradicts
Proposition~\ref{pro:equivalencia_Procedure_PivotPoint}.
\item If $\mu_j$ corresponds to a vertex with zero ordinate, then
$\Phi_{(P_j;\mu_j)}$ is a non-zero constant polynomial and has no roots, but by hypothesis $a_j$ is indeed a root.
\end{itemize}
Hence
$\mu_j$  corresponds to a side of
$\mathcal{N}(P_j)$, and we are done.
{}
\end{proof}

\begin{lemma}\label{re:rango_uno}
   Let $P=A(Y_0)+B(Y_0)\,Y_1$ where $A(Y_0),B(Y_0)\in
  \mathcal{R}_\Gamma[[Y_0]]$. Let $\mu>\mu_{-1}(P)$ such that
  $L(P;\mu)\cap \mathcal{N}(P)$ is a vertex $v=(a,h)$ of
  $\mathcal{N}(P)$ and let $c$ be a nonzero constant such that
  $\Phi_{(P;\mu)}(c)=0$. Let $\bar{P}=P[c\,x^{\mu}+Y]$.
  Then the point $v'=(\nu-h,1)$, where $\nu=a+\mu\,h$, is a vertex of
  $\mathcal{N}(\bar{P})$. Moreover, for $\mu'>\mu$ we have that
  $\Psi_{(\bar{P},v')}(q^{\mu'})\neq 0$.
\end{lemma}
\begin{proof}
  Since 
  $L(P;\mu)\cap \mathcal{N}(P)=\{v\}$, we have that
  $\Phi_{(P;\mu)}(C)=\Psi_{(P;v)}(q^{\mu})\,C^{h}$.
  By hypothesis,  
  $\Phi_{(P;\mu)}(c)=0$ and $c\neq 0$, hence
  $\Psi_{(P;v)}(q^{\mu})=0$.
  Let us denote $M=A\,x^{a}\,Y_0^{h}+B\,x^{a}\,Y_0^{h-1}\,Y_1$, $A,B\in
  \mathbb{C}$,  to be the sum of the terms of $P$
  corresponding to the vertex $v$; in particular, either $A$ or $B$ is
  different from zero.
  Then $\Psi_{(P;v)}(T)=A+B\,T$ and  $q^{\mu}$ is the unique
  root of $\Psi_{(P;v)}(T)$.

  {}
  Lemma~\ref{le:NP_behaviour_under_translation}
  describes $\mathcal{N}(\bar{P})$: The point $(\nu,0)\not\in
  \mathcal{C}(\bar{P})$ because $\Phi_{(P,\mu)}(c)=0$ and
  $\mathcal{N}(\bar{P})$ is contained in the closed right-half plane
  defined by $L(P;\mu)$. Let us prove that the point $v'=(\nu-h,1)\in
  \mathcal{C}(\bar{P})$ which would prove that $v'$ is a vertex of
  $\mathcal{N}(\bar{P})$ because it belongs to $L(P;\mu)$. For that,
  we compute the monomials of $\bar{P}$ corresponding to the point
  $v'$. Again by Lemma~\ref{le:NP_behaviour_under_translation}, these
  are the monomials of $\bar{M}=M[c\,x^{\mu}+Y]$ corresponding
  to~$v'$.  By direct computation these monomials are
  $c^{h-1}\,x^{\nu-h}\,(A\,Y_0+B\,Y_1)$.
  Since $c\neq 0$ and either $A\neq 0$ or $B\neq 0$ then
 $v'\in \mathcal{C}(\bar{P})$.
 Moreover $\Phi_{(\bar{P},v')}(T)=c^{h-1} (A+B\,T)$ and then $q^{\mu}$
is the only root of $\Phi_{(\bar{P},v')}(T)$. Since $|q|\neq 1$, if
$\mu'>\mu$, then $q^{\mu'}\neq q^{\mu}$ and then
$\Phi_{(\bar{P},v')}(q^{\mu'})\neq 0$.
{}
\end{proof}

\section{$q$-Gevrey order}\label{sec:qGevreyOrder}%
Throughout this section we assume that $|q|>1$
In this case, we prove
some properties about the growth of the
coefficients of a  formal power series  solution of a
$q$-difference equation.
{}
Note that the case $|q|<1$ follows from this one
by considering the 
equation $P(q^{-n}x,\sigma^{-n}(y),\ldots,y)=0$
which is equivalent to equation~\eqref{eq:q-difference-equation-intro}
because the operator $\sigma^{-1}(y(x))=y(q^{-1}\,x)$. 
{}

\begin{definition}\label{def:q-gevrey-order}
A formal
power series $\sum_{i=0}^{\infty}c_i\,x^{i}$ is said to be of $q$-Gevrey
order $s\geq 0$ if the series
$\sum_{i=0}^{\infty}c_i\,|q|^{-\frac{1}{2}s\,i^2}x^{i}$ has a positive
radius of convergence.  

We will say that a series
$P=\sum_{\alpha,\rho}P_{\alpha,\rho}\,x^{\alpha}\,Y^{\rho}\in \mathbb{C}[[x,\Y]]$ 
is of $q$-Gevrey  order $s\geq 0$ if the series
$$
\sum_{(\alpha,\rho)\in \mathbb{N}\times \mathbb{N}^{n+1}} P_{\alpha,\rho}\, 
|q|^{-\frac{1}{2} s(\alpha+|\rho|)^{2}}
\,
x^{\alpha}\,Y^{\rho}$$ 
has a positive radius of convergence at the
origin of $\mathbb{C}^{n+2}$.
\end{definition}

We remark that $q$-Gervey of order $0$ means convergence.
This section is devoted to proving the following result (the
number $s(P;y(x))$ in the statement is introduced in
Definition~\ref{de:q-gevrey_bound} 
and can be computed from the relative pivot points of $P$ with respect
to $y(x)$).

\begin{theorem}\label{th:Gevrey-order}
  Let $P\in \mathbb{C}[[x,\Y]]$ be a non-zero formal power series of
  $q$-Gevrey order $t\geq 0$ and
  $y(x)\in \mathbb{C}[[x]]$ a solution of
  $P[y]=0$. Then $y(x)$ is of
  $q$-Gevrey order $t+s(P;y)$ (see the following definition).
\end{theorem}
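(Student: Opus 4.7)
The plan is to reduce to quasi-solved form, then apply the recurrence from Corollary~\ref{co:solved_form_recursive_formula} together with a majorant-series argument to bound the growth of the coefficients of $y(x)$. Let $y(x)=\sum_{i\geq 0}c_i x^{\mu_i}$ be the writing of $y$ as the first $\omega$ terms (Theorem~\ref{th:solutions_are_grid_based}).

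First, I would apply Lemma~\ref{le:reduction_to_solved_form} to obtain $P^{*}=\mshift{H}{\gamma}$, where $H=\partial^{|r|}P/\partial Y^{r}$ is a suitable iterated derivative of $P$, such that the equation $P^{*}[z]=0$ is in quasi-solved form and $\psi^{*}(x)=\sum_{i\geq N}c_i x^{\mu_i-\gamma}$ satisfies $\NIC(P^{*})$. The key observation at this step is that both operations preserve $q$-Gevrey-$t$ status: derivatives only multiply each coefficient by a factor polynomial in the exponent (harmless against the exponential $|q|^{(t/2)(\alpha+|\rho|)^{2}}$), and the shift $\mshift{}{\gamma}$ replaces $P_\rho(x)$ by $q^{\gamma w(\rho)}x^{\gamma|\rho|-\nu}P_\rho(x)$, which can be absorbed into the majorant constants. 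Hence $P^{*}$ is still of $q$-Gevrey order $t$.

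Second, enumerate $\Gamma'=\Gamma^{*}+\Sigma^{+}\mathbb{N}=\{\gamma_i\}_{i\geq 0}$ in increasing order. By Corollary~\ref{co:solved_form_recursive_formula}, the coefficients $d_i$ of $y(x)$ (after the shift) satisfy
\begin{equation*}
\Psi(q^{\gamma_i})\,d_i = -\Coeff\!\bigl(P^{*}[d_1 x^{\gamma_1}+\cdots+d_{i-1}x^{\gamma_{i-1}}];\,x^{\gamma_i}\bigr).
\end{equation*}
Because $|q|>1$ and $\gamma_i\to\infty$, for all but finitely many $i$ one has $|\Psi(q^{\gamma_i})|\geq c_0\,|q|^{m\gamma_i}$, where $m=\deg\Psi$ is the largest index such that the variable $Y_m$ appears effectively at the pivot point $(0,1)$. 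The finitely many exceptional indices (those with $\gamma_i\in\Sigma^{+}$) are absorbed into the initial-data constants.

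Third, I would construct a majorant $\tilde y(x)=\sum e_i x^{\gamma_i}$ by induction. Using the $q$-Gevrey-$t$ hypothesis $|P^{*}_{\alpha,\rho}|\leq A R^{\alpha+|\rho|}|q|^{(t/2)(\alpha+|\rho|)^{2}}$ and expanding the right-hand side of the recurrence, each term combines (i) a coefficient of $P^{*}$, (ii) shift factors $|q|^{w(\rho)\gamma_{j}}$ bounded by $|q|^{n|\rho|\gamma_i}$ on the partition $\alpha+\gamma_{j_1}+\cdots+\gamma_{j_{|\rho|}}=\gamma_i$, and (iii) a product of earlier $|d_{j_k}|$. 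Dividing by $c_0|q|^{m\gamma_i}$, an ansatz of the form $e_i=K T^{i}|q|^{\frac{1}{2}(t+s)\gamma_i^{2}}$ closes inductively because the quadratic term $\frac{t}{2}(\alpha+|\rho|)^{2}$ plus the sum of quadratic exponents $\frac{t+s}{2}\sum\gamma_{j_k}^{2}$ fits under $\frac{t+s}{2}\gamma_i^{2}$ (by convexity of the square together with the linear loss $|q|^{m\gamma_i}$ from the denominator and the shift loss $|q|^{n|\rho|\gamma_i}$), provided $s$ is taken at least as large as the quantity $s(P;y)$ defined from the relative pivot points in Definition~\ref{de:q-gevrey_bound}.

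The main obstacle is the precise matching of the exponent produced by the induction with the stated $s(P;y)$. The heart of the difficulty is that $s(P;y)$ is extracted from the relative pivot points of $P$ itself, not of $P^{*}$, so one must track how the derivative-and-shift reduction transforms the pivot-point data; in particular, the relative pivot points of $P$ with respect to $y(x)$ (Lemma~\ref{le:relativePivotPoints}) record exactly the denominator loss $m$ and the shift weight $w(\rho)$ that drive the inductive bound, and verifying that these combine to the precise constant $s(P;y)$ requires careful accounting of both the change of variable $z=x^{\gamma}y$ and the expansion of $P^{*}[\tilde y_i]$ through the multinomial formula. Once this bookkeeping is done, Theorem~\ref{th:Gevrey-order} follows by reversing the shift and adding back the finitely many initial terms.
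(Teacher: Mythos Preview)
Your overall architecture---reduce to (quasi-)solved form, read off the recurrence for the coefficients, and close by a majorant induction---matches the paper's. But the proposal has a genuine gap at the step where you try to match the exponent to the precise value $s(P;y)$.

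The crude bound you write, ``shift factors $|q|^{w(\rho)\gamma_j}$ bounded by $|q|^{n|\rho|\gamma_i}$'', together with the denominator gain $|q|^{m\gamma_i}$ (where $m=r=\deg\Psi$), closes the induction only for $s\geq n-r$, not for the sharper $s(P;y)=\max\{(j-r)/(a_j-a_r)\}$ of case~(IS). To see why, look at the dominant contribution: a single large index $l_0>i/2$ with $d_{j_0,l_0}=1$. The linear-in-$i$ part of the exponent is $j_0-s\,(i-l_0)$, and with your bound $i-l_0$ can be as small as~$1$, forcing $s\geq j_0-r$ for every $j_0>r$. What rescues the paper's argument is \emph{not} Lemma~\ref{le:relativePivotPoints} (which only compares ordinates), but a vanishing statement: after reduction so that all relative pivot points $Q_j=(a_j,h_j)$ are reached at step~$0$, any term in the recursion with $d_{j_0,l_0}\neq 0$, $j_0>r$, $h_{j_0}=1$ and $l_0>i-a_{j_0}$ is actually zero (and similarly those with $h_{j_0}\geq 2$ and $l_0>i/2$). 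This is the content of the paper's Lemma~\ref{le:coeff_L_prime}: it restricts the summation from $\mathcal{F}_i$ to a smaller set $\mathcal{F}'_i$, and on $\mathcal{F}'_i$ one has $i-l_0\geq a_{j_0}$, which is exactly what turns the linear coefficient into $j_0-s\,a_{j_0}\leq r$ by the definition of $s(P;y)$.

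Two smaller points. First, you should reduce all the way to \emph{solved} form (not just quasi-solved) so that $\Psi(q^{i})\neq 0$ for every $i\geq 1$; the paper does this with one more shift past $\max\Sigma^{+}$, and it avoids the awkward ``absorb finitely many exceptional $i$'' step, which is not quite innocuous since those $d_i$ are free rather than determined. Second, the invariance $s(P;y)=s(P^{*};y^{*})$ under the derivative-and-shift reduction is indeed something one must check, but it follows from the fact that the affine map $\bar\tau$ of~\S\ref{sec:changeOfVariableMshif} restricts to a translation on the line of ordinate one, so horizontal differences $a_j-a_r$ are preserved; your worry there is real but easily resolved. The substantive missing ingredient is the vanishing lemma above.
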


\begin{definition}\label{de:q-gevrey_bound}
  Let $Q=(a,h)$ be the pivot point of $P$ with respect to $y(x)$. 
  The number $s(P;y)$ is defined as follows:

  \textbf{Case $h=1$}. Let $Q_j=(a_j,h_j)$ be the pivot
  point of $P$ with respect to $y(x)$ relative to the variable
  $Y_j$ (for $0\leq j\leq n$). Since $Q$ has ordinate $1$, $Q=Q_j$ for some $j$. 
  Let $r=\max\{j\mid Q_j=Q\}$. There are three cases:
  \begin{enumerate}
  \item[(RS-R)] If $r=n$, then $s(P;y(x))=0$.
  \item[(RS-N)] If $r<n$ and $h_j>1$ for all $r<j\leq n$, then
    $s(P;y(x))$ can be taken as any positive number.
    {}In this case, Theorem \ref{th:Gevrey-order} says that $y(x)$
    is of $q$-Gevrey order $t+\varepsilon$ for any $\varepsilon>0$.{}
  \item[(IS)] If $r<n$ and $h_j=1$ for some $r<j\leq n$, then
  $s(P;y(x))=\max\{\frac{j-r}{a_j-a_r}\mid r<j\leq n,\, h_j=1\}$.
  \end{enumerate}

  \textbf{Case $h>1$}. By Proposition~\ref{pro:reducion_altura_1} there
  exist derivatives 
$H=\frac{\partial^{|\rho|}P}{\partial Y^{\rho}}$, with $|\rho|=h-1$,
such that
the pivot point of $H$ with
respect to $y(x)$ is equal to $1$. Define $s(P;y(x))$ as the minimum of all
those $s(H;y(x)))$. {} If for some derivative $H$, the equation
$H=0$ and its solution $y(x)$ fall in the (RS-N)
case, then $s(P;y(x))$ can be taken as any positive number.{}
\end{definition}

\begin{remark} When $Q$ has ordinate $h>1$, the number $s(P;y(x))$ can
  be described directly in terms of the relative pivot
  points: 
  let $Q=(a,h)$ be the (general) pivot point of $P$ with
  respect to $y(x)$, and $Q_{\rho}(P;y(x))=(a_\rho,h_{\rho})$. 
  Let  $A$ be the set formed by those $3$-tuples $(\rho,i,j)$
  satisfying the following properties: $|\rho|=h$,  $Q_\rho=Q$, $0\leq
  i <j\leq n$, and $h_{\rho'}=h$, where $\rho'=\rho-e_i+e_j${}, using
  the previous notation $e_j = (0,\ldots,0, 1,0,\ldots,0)$ where the $1$ is at
  position $j+1$ (as we need to account for the case $j=0$){}.
  If the set $A$ is empty, we define $s(P;y(x))$ as any
  positive real number. Otherwise, $s(P;y(x))$ is the minimum of
  $\frac{j-i}{a_{\rho'}-a}$, for those $(\rho,i,j)\in A$.
\end{remark}

\begin{remark}\label{re:Zhang-vs-ours}%
  Zhang's paper \cite{Zhang} deals with  
  the case in which $P$
  is a convergent series. The bound given there for the $q$-Gevrey
  order of the 
  solution coincides with the one described here in cases (RS-R) and
  (IS), provided $h_n=1$. In the other cases,  Zhang proves that some
  bound exits but without a detailed control. In particular, our bound
  in case (RS-N) is more accurate because we prove that the solution
  is of $q$-Gevrey order $s$, for any $s>0$. If $h_n=1$, 
  the bound found in~\cite{Zhang}  is described with
  the aid of  the Newton-Adams Polygon
  (see \cite{Adams,Adams-31})
  of the  linearized operator along $y(x)$:
  \begin{displaymath}
    L_y=\sum_{j=0}^{n}\frac{\partial P}{\partial Y_j}[y(x)]\,
    \sigma^{j}\in \mathbb{C}[[x]][\sigma].
  \end{displaymath}
  By Proposition~\ref{pro:reducion_altura_1}, 
  we know that $L_y$ is not identically zero
  if and only if the pivot point of $P$ with respect to $y(x)$ has
  ordinate $1$. The Newton-Adams Polygon $\mathcal{N}_q(L_y)$ of $L_y$
  is defined as follows: for each $0\leq j \leq n$,  let
  $l_j=\ord \frac{\partial P}{\partial Y_j}[y(x)]\in
  \mathbb{N}\cup\{\infty\}$. Notice that $l_j=a_j$ if
  $h_j=1$. Then $\mathcal{N}_q(L_y)$ is the convex
  hull of the set $\{(j,l_j+r)\mid l_j\neq \infty,\,r\geq 0\}$. It is
  easy to check that 
 $s(P;y(x))$ is the reciprocal of the minimum of the positives slopes
 of~$\mathcal{N}_q(L_y)$.  
\end{remark}

\begin{remark}
  The labels (RS-*) and (IS-*) in Definition~\ref{de:q-gevrey_bound}
  correspond 
  to the  singularity type of 
  the linearized operator $L_y$ (regular  or irregular).
  The labels (*-R) or (*-N) denote whether the solution
  $y(x)$ is a regular solution of $P$ (i.e. $h_n=1$) or not.
\end{remark}
\subsection{Reduction to solved form}
In order to prove Theorem~\ref{th:Gevrey-order}, 
we first show (in the paragraphs below) that we may assume that the equation $P[y]=0$ is
in solved form and that the general and
all the
relative pivot points with respect to the variables $Y_j$ are reached at
step $0$.  

Let $y(x)=\sum_{i=0}^{\infty}c_i\,x^{i}\in \mathbb{C}[[x]]$ be a
solution of $P[y]=0$. 
We apply the process 
described in
the proof of Lemma~\ref{le:reduction_to_solved_form} to $P$ and
$y(x)$ in three steps: 
\begin{enumerate}
\item[(a)] Replace $P$ by some of its derivatives
$H$ such that the ordinate of the pivot point of $H$ with
respect to $y(x)$ is equal to $1$ and $s(P;y(x))=s(H;y(x))$.
\item[(b)] Let $N$ be  large enough so that all the relative points
$Q_{j}(H;y(x))$, for $0\leq j \leq n$, have been reached at step 
$N-1$. 
\item[(c)] Let $\gamma=N-1$ and consider $P^*=\mshift{H_{N}}{\gamma}$
  and $y^*(x)=\sum_{i=N}^{\infty}c_i\,x^{i-N+1}$. Then, $P^*[y]=0$ is
  in 
quasi-solved form and $P^*[y^*(x)]=0$. 
\end{enumerate}

If $\bar{y}(x)=\sum_{i=N}^{\infty}c_i\,x^{i}$, then 
the relative pivot points of  $y(x)$
with respect to $H$ are the same as the relative pivot points of
$\bar{y}(x)$ 
with respect to $H_N$. Hence, 
$s(H;y(x))=s(H_N;\bar{y}(x))$. Finally,
{}
the change of variables
\eqref{eq:changeOfVariableMshif} produces the affine transformation $\tau$ on the $(i,j)$ plane on which the Newton polygon is defined; recall that this transformation satisfies 
$\tau(Q_j(H_N;\bar{y}(x)))=Q_j(P^*;y^*(x))$, and moreover, $\tau$ 
restricted to the line of points with ordinate
$1$ is a translation,{} so that $s(H_N;\bar{y}(x))=s(P^*;y^*(x))$. This proves that
$s(P;y(x))=s(P^*;y^*(x))$. 
Moreover, the general and relative pivot points $Q_j(P^*;y^*(x))$ are
reached at step~$0$. 
It is straightforward to prove that if 
$P$ is of  $q$-Gevrey order $t$, then $H$, $H_N$ and $P^*$
are all of $q$-Gevrey order $t$.  
Also $y^*(x)$ and $y(x)$ have the same  $q$-Gevrey order.
This shows that it is
enough to prove Theorem~\ref{th:Gevrey-order} when the
$q$-difference equation $P[y]=0$ is
in quasi-solved form and the relative pivot points
$Q_j(P;y(x))$ are reached at step~$0$. 

Finally, assuming that  $P[y]=0$ is in quasi-solved form,
since $|q|>1$, the set $\Sigma^{+}$ is 
finite. Let $N$ be an integer greater than the maximum of
$\Sigma^{+}$,  $P^{*}=\mshift{(P_{N+1})}{N}$ 
and $y^{*}(x)=\sum_{i=N+1}c_i\,x^{i-N}$. It is clear that
$s(P;y(x))=s(P^*;y^*(x))$, and also that $P^{*}$ and $y^{*}(x)$ are of
the same 
$q$-Gevrey order as $P$ and $y(x)$ respectively. From this we conclude
that we may assume the $q$-difference equation $P[y]=0$ is in solved form.
\subsection{Recursive formula for the coefficients}%
\label{sec:recurs-form-coeff}
Let $y(x)=\sum_{i=1}^{\infty} c_i\,x^{i}$ be a power series solution
of the $q$-difference equation $P[y]=0$, 
where  
$$
P=\sum_{(\alpha,\rho)\in \mathbb{N}\times\mathbb{N}^{n+1}} 
P_{\alpha,\rho}\, x^{\alpha}\,Y^{\rho}
\in \mathbb{C}[[x,\Y]].
$$ 
Assume that it is in solved form and that the general pivot point $Q$
with respect to $y(x)$
and the relative ones $Q_j=(a_j,h_j)$
are all reached at step~$0$. Since the equation
is in solved form, $Q=(0,1)$. Let $r$
be the maximum index $j$, $0\leq j\leq n$, such that
$Q_j=Q$ and let $\Psi(T)$ be the indicial polynomial of $P$ at point
$Q$. From equation~(\ref{eq:ecuacion_recursiva:1}) one has
\begin{equation}
  \label{eq:ec_recursiva_2}
  \Psi(q^{i})\,c_i=-\Coeff(P_i;\,x^{i}\,Y^{\underline{0}}),\quad i\geq 1.
\end{equation}
As usual $P_i=P[c_1x+\cdots+c_{i-1}x^{i-1}+Y]$. We are interested in
computing $\Coeff(P_i;\,x^{i}\,Y^{\underline{0}})$ in terms of
$c_1,c_2,\ldots,c_{i-1}$. 
To this end, we shall consider  formal
series $H^{i}$ in the variables
$T_{\alpha,\rho}$, $C_{j,l}$, $x$ and $\Y$, where
$\alpha\in \mathbb{N}$, 
$\rho=(\rho_0,\ldots,\rho_n)\in  \mathbb{N}^{n+1}$, 
$0\leq j\leq n$ and $1\leq l\leq i-1$, defined as follows
\begin{displaymath}
  H^{i}=\sum_{(\alpha,\rho)\in
    \mathbb{N}^{n+2}}T_{\alpha,\rho}\,\,
   x^{\alpha}
   \prod_{0\leq j\leq n}\left(
    \sum_{l=1}^{i-1}C_{j,l}\,x^{l}+Y_j
      \right)^{\rho_j}.
\end{displaymath}
For $(\beta,\gamma)\in \mathbb{N}\times \mathbb{N}^{n+1}$, let 
$H^{i}_{\beta,\gamma}$ be the coefficient of $x^{\beta}Y^{\gamma}$ in
$H^{i}$. It is a polynomial with coefficients in $\mathbb{N}$ and in the
variables $T_{\alpha,\rho}$ and $C_{j,l}$. 
Denote $L_{i}=H^{i}_{i,\underline{0}}$, i.e. the
coefficient of $x^{i}Y^{\underline{0}}$ in $H^{i}$. 
A simple computation shows that
\begin{displaymath}
  L_{i}=\sum_{(\alpha,\rho,\ud)\in \mathcal{F}_i}
       B^{i}_{\alpha,\rho,\ud}\,\,T_{\alpha,\rho}\,\,
       \prod_{0\leq j\leq n}
        \prod_{1\leq l\leq i-1}C_{j,l}^{d_{j,l}},
\end{displaymath}
where $B^{i}_{\alpha,\rho,\ud}$ are non-negative integers and 
the summantion set $\mathcal{F}_i$ comprises those $(\alpha,\rho,\ud)$,
such that $\alpha\in \mathbb{N}$, $\rho\in \mathbb{N}^{n+1}$, 
$\ud=(d_{j,l})\in \mathbb{N}^{(n+1)(i-1)}$, for $0\leq j\leq n$,
$1\leq l \leq i-1$, for which the following formul{\ae} hold:
\begin{eqnarray}
  \label{eq:condiciones_F_i_A}
  \alpha+\sum_{j,l}l\,d_{j,l}&=&i,\\
  \label{eq:condiciones_F_i_B}
  \sum_{l}d_{j,l}&=&\rho_j,\quad\text{and so, }\,\,\,
\sum_{j,l}d_{j,l}=|\rho|.
\end{eqnarray}

\begin{remark}
Notice that, substituting in $H^{i}$ the variables
$T_{\alpha,\rho}$ for 
$P_{\alpha,\rho}$ and $C_{j,l}$ for $c_{j,l}:=q^{j\,l}\,c_l$, one obtains
$P_i$. Hence,
$$\Coeff(P_i;x^{i}Y^{\underline{0}})=L_{i}(P_{\alpha,\rho},c_{j,l}).$$   
\end{remark}
However, in order to have an optimal control on the $q$-Gevrey 
growth,  we need to be more
precise and use the position of the relative pivot points of $P$ with
respect to $y(x)$, and refine the summation set:
let $\mathcal{F}'_i$ be the subset of $\mathcal{F}_i$ composed by those
$(\alpha,\rho,\ud)$ satisfying the following properties:
\begin{eqnarray}\label{eq:condiciones_F'_i_A}
  &\text{If }&j>r,\,h_j\geq 2,\,\text{ and }l>i/2\,\text{ then
  }d_{j,l}=0.\\\label{eq:condiciones_F'_i_B}
  &\text{If }&j>r,\,h_j=1,\,\text{ and }l>i-a_j\,\text{ then }d_{j,l}=0.
\end{eqnarray}
Let $\mathcal{F}''_i$ be the complement of $\mathcal{F}'_i$ in
$\mathcal{F}_i$ and let $L'_i$ (resp. $L''_i$) be the sum of those
terms in 
$L_i$ corresponding to those $(\alpha,\rho,\ud)$ in $\mathcal{F}'_i$
(resp. in $\mathcal{F}''_i$); obviously $L_i=L'_i+L''_i$.
\begin{lemma}\label{le:coeff_L_prime} The following equality holds:
  $\Coeff(P_i;x^{i}Y^{\underline{0}})=L'_{i}(P_{\alpha,\rho},c_{j,l})$. 
\end{lemma}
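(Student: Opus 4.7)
The preceding remark identifies $\Coeff(P_i; x^i Y^{\underline{0}})$ with $L_i(P_{\alpha,\rho},c_{j,l})$, so the lemma is equivalent to showing $L''_i(P_{\alpha,\rho},c_{j,l}) = 0$. The plan is to partition $\mathcal{F}''_i = \mathcal{A} \cup \mathcal{B}$, where $\mathcal{A}$ (respectively $\mathcal{B}$) consists of triples admitting a witness $(j,l)$ of type (a) (resp.\ (b)). Two different mechanisms will force the two parts to vanish: for $\mathcal{B}$ the coefficients $P_{\alpha,\rho}$ themselves are zero, while for $\mathcal{A}$ a summation identity cancels the whole sum without pointwise vanishing.

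Case (b) I would treat pointwise. Fix $(\alpha,\rho,\ud)\in\mathcal{B}$ and a witness $(j,l)$ with $j>r$, $h_j=1$, $l>i-a_j$. The sum-of-exponents constraint gives $\alpha+|\rho|\leq i-(l-1)d_{j,l}\leq i-l+1\leq a_j$. Since $d_{j,l}\geq 1$ forces $\rho_j\geq 1$, the point $(\alpha,|\rho|)$ lies in $\mathcal{C}_j(P)$; and the hypothesis ``$Q_j(P;y(x))=(a_j,1)$ reached at step $0$'' places $\mathcal{C}_j(P)$ on or to the right of the line $\{a+b=a_j+1\}$, giving $\alpha+|\rho|\geq a_j+1$, a contradiction. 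Hence $P_{\alpha,\rho}=0$.

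Case (a) needs a different approach, since the analogous bound only yields $\alpha+|\rho|\leq\lceil i/2\rceil$ and $\alpha+|\rho|\geq a_j+h_j$, which are compatible for large $i$. Instead I would use that $l>i/2$ forces $d_{j,l}=1$ and rules out a second case-(a) witness in the same triple (otherwise $l+l'>i$), yielding a \emph{disjoint} decomposition $\mathcal{A} = \bigsqcup_{(j,l) \text{ case-(a)}} A_{(j,l)}$ with $A_{(j,l)} := \{(\alpha,\rho,\ud)\in\mathcal{F}_i : d_{j,l}=1\}$. For each case-(a) pair, the multinomial identity $\binom{\rho_j}{\ldots,1,\ldots}=\rho_j\binom{\rho_j-1}{\ldots,0,\ldots}$ allows one to factor $C_{j,l}$ out of $L_i|_{A_{(j,l)}}$ and identify the remainder with the coefficient of $x^{i-l}$ in the expansion of $(\partial P/\partial Y_j)[y(x)]$, giving
\begin{displaymath}
L_i\big|_{A_{(j,l)}}(P_{\alpha,\rho},c_{j,l})\;=\;q^{jl}\,c_l\cdot\Coeff\!\left(x^{i-l};\,\tfrac{\partial P}{\partial Y_j}[y(x)]\right).
\end{displaymath}
Since $h_j\geq 2$, equation~(\ref{eq:relacion_pivot_point_pivot_point_derivada}) and Lemma~\ref{le:equivalencia_Procedure_PivotPoint} ensure that $y(x)$ is a solution of $(\partial P/\partial Y_j)[y]=0$, so every coefficient of $(\partial P/\partial Y_j)[y(x)]$ vanishes and $L_i|_{\mathcal{A}}(P,c)=0$. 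Triples in $\mathcal{B}\setminus\mathcal{A}$ are already handled by the case-(b) argument, so $L''_i=0$.

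The main obstacle will be the factorisation step in case (a): one must verify that the range $l'\in[1,i-1]$ appearing in the $\ud$-index of $L_i|_{A_{(j,l)}}$ matches the range $l'\geq 1$ arising from expanding the full $\sigma^{j'}(y(x))^{\rho_{j'}}$-factors in $(\partial P/\partial Y_j)[y(x)]$. This should come out cleanly, because the constraint $\alpha+l+\sum_{(j',l')\neq(j,l)} l' d_{j',l'}=i$ together with $d_{j',l'}\geq 0$ forces the effective support of $l'$ to lie in $[1,i-l]$ on both sides, so the two sums agree term-by-term.
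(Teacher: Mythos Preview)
Your argument is correct, but it takes a substantially different route from the paper's.

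The paper treats conditions~\eqref{eq:condiciones_F'_i_A} and~\eqref{eq:condiciones_F'_i_B} uniformly. For each offending pair $(j_0,l_0)$ it considers $P_{l_0}=P[\sum_{l<l_0}c_lx^{l}+Y]$ and uses that the relative pivot point $Q_{j_0}=(a_{j_0},h_{j_0})$, being reached at step~$0$, satisfies $Q_{j_0}(P_{l_0};l_0)=(a_{j_0},h_{j_0})$. Hence every $(\alpha,|\rho|)\in\mathcal{C}_{j_0}(P_{l_0})$ lies to the right of the line of co-slope $l_0$ through $Q_{j_0}$, giving $\alpha+l_0|\rho|\geq a_{j_0}+l_0 h_{j_0}$. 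Passing to the $Y^{\underline{0}}$-part of the substitution $Y_{j}\mapsto\sum_{l\geq l_0}C_{j,l}x^{l}$, every term involving $C_{j_0,l_0}$ has $x$-order at least $a_{j_0}+l_0 h_{j_0}$; under either premise this exceeds $i$, so $C_{j_0,l_0}$ cannot appear in $\coeff(P_i;x^{i}Y^{\underline{0}})$. No case split, no cancellation.

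Your proof instead uses only the co-slope-$1$ information (the pivot point of $P$ itself). That suffices for case~(b), where $h_j=1$, and your inequality $\alpha+|\rho|\leq a_j<a_j+1$ cleanly forces $P_{\alpha,\rho}=0$. For case~(a) the co-slope-$1$ bound is too weak, as you correctly observe; you recover the result by the pleasant structural fact that $h_j\geq 2$ makes $y$ a solution of $\partial P/\partial Y_j[y]=0$, together with the multinomial factorisation identifying $L_i|_{A_{(j,l)}}$ with $c_{j,l}$ times a coefficient of that derivative. The uniqueness of the case-(a) witness and the automatic constraint $d'_{j,l}=0$ (since $l>i-l$) make the bijection exact, so your identification is sound.

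What each approach buys: the paper's is shorter and shows that the full hypothesis ``reached at step~$0$'' (i.e.\ $Q_{j_0}(P_{l_0};l_0)=Q_{j_0}$ for \emph{all} $l_0$) is exactly what is needed; your case-(a) argument is longer but isolates an interesting consequence of $h_j\geq 2$ and could in principle be reused elsewhere.
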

\begin{proof}
  Take $l_0$ with $1\leq l_0\leq i-1$, and  consider
  $P_{l_0}=P[\sum_{l=1}^{l_0-1}c_l\,x^{l}+Y]$
  {}(see~(\ref{eq:def_translacion})){}.
  Let
  $\bar{P}_{l_0}$ be
  the series obtained substituting in $P_{l_0}$ the expression
  $\sum_{l=l_0}^{i-1}C_{j,l}\,x^{l}+Y_j$ for the variable $Y_j$,
  $0\leq j\leq n$.
  {}By construction,
  \begin{align*}
   &P_i=P_{l_0}[c_{l_0}\,x^{l_0}+\cdots+c_{i-1}\,x^{i-1}+Y],\\
   &L_i(T_{\alpha,\rho}=P_{\alpha,\rho},C_{j,l}=c_{j,l};1\leq l <l_0)=\coeff(\bar{P}_{l_0};x^{i}Y^{\underline{0}}).
  \end{align*}
  {}%
    Write $P_{l_0}=\sum_{(\alpha,\rho)\in
      \mathbb{N}\times \mathbb{N}^{n+1}}
    (P_{l_0})_{\alpha,\rho}\,x^{\alpha}\,Y^{\rho}$. Expanding
    $\bar{P}_{l_0}$ as a series in the variables $C_{j,l}$, $l_0\leq
    l\leq i-1$, $x$ and $Y_j$, $0\leq j\leq n$,
    let us denote, for $r<j_0\leq n$, by
    $A_{j_0,l_0}$ the sum of terms of $\bar{P}_{l_0}$ in which the
    variable $C_{j_0,l_0}$ appears effectively. In order to
    compute $A_{j_0,l_0}$, it is only necessary to take into account the
    terms of $P_{l_0}$ in which the variable $Y_{j_0}$ appears
    effectively, that is, only consider the sum over the indices
    $(\alpha,\rho)\in \mathcal{C}_{j_0}(P_{l_0})$. Since we are
    assuming that the pivot 
    point $Q_{j_0}=(a_{j_0},h_{j_0})$ of $P$ with respect to $y(x)$
    relative to the variable 
    $Y_{j_0}$ is reached at step $0$, we may assume that the order
    in $x$ of $A_{j_0,l_0}$ is greater than or
    equal to $a_{j_0}+h_{j_0}\,l_0$. If $j_0,l_0$ 
    satisfy the premise of either 
    \eqref{eq:condiciones_F'_i_A} or 
    \eqref{eq:condiciones_F'_i_B}, then 
    $a_{j_0}+h_{j_0}\,l_0>i$ and the variable
    $C_{j_0,l_0}$ does not appear effectively in the coefficient of
    $x^{i}Y^{\underline{0}}$ in $\bar{P}_{l_0}$. From this one infers 
    that   $L''_i(P_{\alpha,\rho},c_{j,l})=0$.    
\end{proof}

 From the definition of $r$, one has
  $\Psi(T)=P_{0,e_0}+P_{0,e_1}\,T+\cdots + P_{0,e_r}\,T^{r}$, with
  $P_{0,e_r}\neq 0$. In particular, $\Psi(T)$ has degree
  $r$. Moreover, since the equation $P[y]=0$ is in solved form, 
   $\Psi(q^{i})\neq 0$, for $i\geq 1$.
 From equation~(\ref{eq:ec_recursiva_2}) and
  Lemma~\ref{le:coeff_L_prime}, the following recursive formula holds
  for all $i\geq 1$:
  \begin{equation}
    \label{eq:recursive_formula}
    c_i=\frac{-1}{\Psi(q^{i})}L'_i(P_{\alpha,\rho};c_{j,l}),
  \end{equation}
 where $c_{j,l}=q^{j\,l}c_{l}$, $1\leq l\leq i-1$ and  $0\leq j\leq
 n$. 

\subsection{A majorant series}
Assume the hypotheses and notations of the previous sub-section and
 that $P$ has $q$-Gevrey
order $t\geq 0$.
Let $s=s(P;y(x))$. 
Consider the  equation in two variables $x$ and $w$:
 \begin{equation}
   \label{eq:ecuacion_mayorante}
   w= \vq^{-\frac{s+t}{2}} \,|c_1|\,x+
     \sum_{(\alpha,\rho)\in \mathcal{C}'}  
     G_{\alpha,\rho} \, x^{\alpha}\,w^{|\rho|},
 \end{equation}
where $G_{\alpha,\rho}=
|P_{\alpha,\rho}|\,
\vq^{-\frac{t}{2}(\alpha+|\rho|)^{2}+k_1(\alpha+|\rho|)+k_2}$, 
$k_1$ and $k_2$ are positive constants to be specified later, 
and $\mathcal{C}'$ is the set $\mathbb{N}\times \mathbb{N}^{n+1}$ without
the points $(0,\underline{0})$, $(1,\underline{0})$ and $(0,e_j)$ for
$0\leq j\leq n$. 
It is straightforward  to prove that the right hand side of
\eqref{eq:ecuacion_mayorante} is a convergent series and that
the equation has a unique power series solution
$w(x)=\sum_{i=1}^{\infty} c'_i\, x^{i}$,
whose coefficients $c'_i$ satisfy the recursive formulae:
\begin{displaymath}
  c'_1=\vq^{-\frac{s+t}{2}} |c_1|,\quad 
  c'_i=L_i(G_{\alpha,\rho};\{c'_{j,l}\}),\quad i\geq 2,
\end{displaymath}
where $c'_{j,l}=c'_{l}$, for $1\leq l\leq i-1$, and $0\leq j\leq n$. 
In particular, $c'_i\geq 0$, for all $i\geq 1$, since the coefficients
of $L_i$ are non-negative. By Puiseux's theorem, the series $w(x)$ is 
convergent. The following lemma finishes the proof of
Theorem~\ref{th:Gevrey-order},
{}
because by using the majorant criterion the series solution
$y(x)=\sum_{i=1}^{\infty}c_i\,x^{i}$ is of $q$-Gevrey order $s+t$.
{}
\begin{lemma} With the above notations, there exist
  positive constants $k_1$ and $k_2$ such that the coefficients $c'_l$
  of the solution of equation~(\ref{eq:ecuacion_mayorante}) satisfy
  \begin{equation}
    \label{eq:majorant_equation_1}
    |c_l|\leq {\vq^{\frac{s+t}{2}l^{2} }}|c'_{l}|,\quad l\geq 1.
  \end{equation}
\end{lemma}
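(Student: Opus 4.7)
The plan is induction on $l$. The base case $l=1$ is immediate from $c'_1=\vq^{-(s+t)/2}|c_1|$, which yields equality in~(\ref{eq:majorant_equation_1}). For the inductive step, assume~(\ref{eq:majorant_equation_1}) holds for all $l<i$ and apply the recursive formula~(\ref{eq:recursive_formula}). Since $L'_i$ is a polynomial in $T_{\alpha,\rho}$ and $C_{j,l}$ with non-negative integer coefficients, and the induction hypothesis combined with $|c_{j,l}|=\vq^{jl}|c_l|$ gives $|c_{j,l}|\leq \vq^{jl+\frac{s+t}{2}l^2}c'_l$, one deduces
\begin{displaymath}
|c_i|\leq \frac{1}{|\Psi(q^i)|}\sum_{(\alpha,\rho,\ud)\in\mathcal{F}'_i} B^{i}_{\alpha,\rho,\ud}\,|P_{\alpha,\rho}|\,\vq^{\sum_{j,l}d_{j,l}(jl+\frac{s+t}{2}l^2)}\prod_{j,l}(c'_l)^{d_{j,l}}.
\end{displaymath}

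Meanwhile, $\vq^{\frac{s+t}{2}i^2}c'_i=\vq^{\frac{s+t}{2}i^2}L_i(G_{\alpha,\rho};c'_l)$ is a sum of non-negative contributions indexed over $\mathcal{F}_i\supseteq\mathcal{F}'_i$. Comparing term by term, substituting the definition of $G_{\alpha,\rho}$, cancelling $|P_{\alpha,\rho}|$, and taking $\log_{\vq}$ (using $\vq>1$), one reduces to showing that for every $(\alpha,\rho,\ud)\in\mathcal{F}'_i$ and every $i\geq 2$ the scalar exponent inequality
\begin{displaymath}
\sum_{j,l}d_{j,l}\bigl(jl+\tfrac{s+t}{2}l^2\bigr)-\log_{\vq}|\Psi(q^i)|\leq \tfrac{s+t}{2}i^2-\tfrac{t}{2}(\alpha+|\rho|)^2+k_1(\alpha+|\rho|)+k_2
\end{displaymath}
holds for a suitable pair $k_1,k_2>0$.

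Two estimates drive the verification. First, since $P[y]=0$ is in solved form, $\Psi(q^i)\neq 0$ for every $i\geq 1$, and since $\Psi$ has degree $r$ with $P_{0,e_r}\neq 0$, there is $C>0$ with $|\Psi(q^i)|\geq C\vq^{ri}$ for all $i\geq 1$, so $-\log_{\vq}|\Psi(q^i)|\leq -ri+O(1)$, with the additive constant absorbed into $k_2$. Second, by~(\ref{eq:condiciones_F_i_A})--(\ref{eq:condiciones_F_i_B}) and the exclusions~(\ref{eq:condiciones_F'_i_A})--(\ref{eq:condiciones_F'_i_B}), writing $R_j=\sum_l l\,d_{j,l}$ gives $\sum_j R_j=i-\alpha$, and the bound $\sum_{j\leq r}jR_j\leq r(i-\alpha)$ combines with the $-ri$ contribution to yield $\leq -r\alpha$. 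The remaining contributions come from $j>r$, where $l$ is restricted to $l\leq i/2$ (when $h_j\geq 2$) or $l\leq i-a_j$ (when $h_j=1$). A general combinatorial bound is $\sum_{j,l}d_{j,l}l^2\leq (i-\alpha-|\rho|+1)(i-\alpha)$, since the $|\rho|$ values of $l$ supporting a nonzero $d_{j,l}$ each exceed one and sum to $i-\alpha$.

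The remaining work is a case analysis following Definition~\ref{de:q-gevrey_bound}. In case (RS-R) one has $r=n$, $s=0$ and $\mathcal{F}'_i=\mathcal{F}_i$, and the combinatorial bound on $\sum d_{j,l}l^2$ combined with a direct expansion of $i^2-(\alpha+|\rho|)^2$ suffices once $k_1,k_2$ absorb the linear residuals. In case (RS-N), the restriction $l\leq i/2$ gives $\sum_{j>r}d_{j,l}l^2\leq \tfrac{i}{2}(i-\alpha)$, leaving a deficit of order $\tfrac{s}{4}i^2$ on the right-hand side that accommodates any $s>0$ for large $i$, with $k_2$ handling the finitely many small~$i$. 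The crucial case is (IS), where $s=\max_{j>r,\,h_j=1}(j-r)/a_j$ is tight: the bound $l^2\leq l(i-a_j)$ combined with $sa_j\geq j-r$ gives, for each such $(j,l)$, an estimate of the form $jl+\tfrac{s+t}{2}l^2\leq \tfrac{s+t}{2}il + rl + O(a_j l)$, whose sum against $\sum l\,d_{j,l}\leq i-\alpha$ is absorbed by $\tfrac{s+t}{2}i(i-\alpha)\leq \tfrac{s+t}{2}i^2$ modulo linear-in-$(\alpha,|\rho|)$ remainders handled by $k_1(\alpha+|\rho|)+k_2$. The case $h>1$ reduces to $h=1$ by the infimum definition of $s(P;y(x))$ and the fact that passing to derivatives of $P$ preserves the $q$-Gevrey order~$t$. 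The main obstacle is precisely case (IS): one must check that the cross-terms and linear residuals coming from the finitely many admissible $j$ can all be dominated by a single pair $(k_1,k_2)$ independent of $i$, $\alpha$, $\rho$ and $\ud$, which exploits the tightness of the definition of $s$ and requires a careful termwise accounting in the expansion of $L'_i$.
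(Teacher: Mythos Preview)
Your overall architecture matches the paper's: induction on $l$, reduction via the recursion~(\ref{eq:recursive_formula}) to a pure exponent inequality, the lower bound $|\Psi(q^i)|\geq C\vq^{ri}$, and a case analysis. However, your handling of the (IS) case has a genuine gap.

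The termwise estimate you propose, $jl+\tfrac{s}{2}l^2\leq \tfrac{s}{2}il+rl+O(a_jl)$ for $j>r$ with $h_j=1$, is correct pointwise (the implied constant is $s/2$), but when you sum it over all $(j,l)$ the residual $\tfrac{s}{2}\sum_{j,l}a_j\,l\,d_{j,l}$ is of order $i$, not of order $\alpha+|\rho|$. Concretely, take $r=0$, two indices $j_1=1$, $j_2=2$ with $a_1=2$, $a_2=4$ (so $s=1/2$), $\alpha=0$, $\rho=e_{1}+e_{2}$, and $l_1=l_2=i/2$. This tuple lies in $\mathcal{F}'_i$ for $i\geq 8$, yet your residual is $\tfrac{s}{2}(a_1l_1+a_2l_2)=3i/4$, which no fixed $k_2$ can absorb. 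The same issue arises whenever several indices $j>r$ carry mass, and it also leaves the terms with $j\leq r$ (where $l$ is unconstrained) without a usable bound on $\sum jl\,d_{j,l}$ beyond $r(i-\alpha)$.

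What the paper does instead is the decisive step you are missing: split $\mathcal{F}'_i=F_1\cup F_2$ according to whether some $l>i/2$ carries mass. On $F_1$ every $l\leq i/2$, so $\sum l^2d_{j,l}\leq \tfrac{i}{2}\sum l\,d_{j,l}\leq i^2/2$ and the bound is immediate. On $F_2$ the constraint $\sum_l l\,d_{j,l}\leq i$ forces exactly one pair $(j_0,l_0)$ with $l_0>i/2$ and $d_{j_0,l_0}=1$; isolating that single term and writing $a=i-l_0$, all remaining $l$'s are $\leq a$, and the whole expression is bounded by an explicit quadratic $f_i(a)$. The maximum of $f_i$ on the admissible interval for $a$ (namely $[1,i/2)$ if $j_0\leq r$, and $[a_{j_0},i/2)$ if $j_0>r$, $h_{j_0}=1$) is then controlled using exactly the inequality $j_0-sa_{j_0}\leq r$ from the definition of $s$. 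This ``one large index'' isolation is what makes the constant $k_2$ independent of $i$; your termwise argument cannot achieve that.

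A minor point: the reduction from $h>1$ to $h=1$ is carried out \emph{before} this lemma (in the reduction-to-solved-form subsection), so it should not appear inside the present proof.
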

\begin{proof}
The above inequality holds trivially for $l=1$.
  Assume that it holds for $l=1,2,\ldots,i-1$. 
Using equation~\eqref{eq:recursive_formula} and the fact
  that the coefficients of $L_i$ are non-negative, one gets
  \begin{align}\nonumber
    |c_i|\leq&
    \frac{1}{|\Psi(q^{i})|}
  \sum_{(\alpha,\rho,\ud)\in\mathcal{F}'_i}
  B^{i}_{\alpha,\rho,\ud} \,|P_{\alpha,\rho}| \prod_{j,l}(\vq^{jl}|c_l|)^{d_{j,l}}
    \\\label{eq:induccion_c_i}
  \leq&
 \frac{1}{|\Psi(q^{i})|}
 \sum_{(\alpha,\rho,\ud)\in\mathcal{F}'_i}B^{i}_{\alpha,\rho,\ud}\,
 G_{\alpha,\rho}
 \frac{\vq^{\frac{t}{2}(\alpha+|\rho|)^{2}}}{\vq^{k_1(\alpha+|\rho|)+k_2}}
 \prod_{j,l}\left(
 \vq^{jl}\,{\vq^{\frac{s+t}{2}l^{2}}}|c'_{l}| 
\right)^{d_{j,l}}\\\nonumber
=& 
\sum_{(\alpha,\rho,\ud)\in\mathcal{F}'_i} R_i(\alpha,\rho,\ud)\,
 B^{i}_{\alpha,\rho,\ud} \,G_{\alpha,\rho}\,\prod_{j,l}|c'_l|^{d_{j,l}},
 \end{align}
 where the indices $j$ and $l$ are $0\leq j\leq n$ and $1\leq
 l\leq i-1$, and 
 \begin{flalign*}
    R_i(\alpha,\rho,\ud)&=
  \frac{1}{|\Psi(q^{i})|}\,\vq^{r_i(\alpha,\rho,\ud)},
\\
r_i(\alpha,\rho,\ud)&=
\sum_{j,l}(j\,l+\frac{s+t}{2}l^{2})d_{j,l}+
\frac{t}{2}(\alpha+|\rho|)^{2}
-k_1(\alpha+|\rho|)-k_2.
 \end{flalign*}
\textbf{Claim\/} (proved below): there exist positive constants
$k_1$ and 
$k_2$, such that
\begin{equation}
  \label{eq:claim_R_i}
  R_i(\alpha,\rho,\ud)\leq \vq^{\frac{s+t}{2}i^{2}},
 \quad (\alpha,\rho,\ud)\in \mathcal{F}'_{i}.
\end{equation}
Assuming the claim and using equations \eqref{eq:induccion_c_i} and \eqref{eq:claim_R_i}, 
one gets
\begin{displaymath}
  |c_i|\leq \vq^{\frac{s+t}{2}i^{2}}
 \sum_{(\alpha,\rho,\ud)\in\mathcal{F}'_i} 
 B^{i}_{\alpha,\rho,\ud}\,
G_{\alpha,\rho}  \prod_{j,l}|c'_l|^{d_{j,l}}
=\vq^{\frac{s+t}{2}i^{2}} L'_i(G_{\alpha,\rho};\{|c'_l|\}).
\end{displaymath}
Since the coefficients of $L_i$, the elements $G_{\alpha,\rho}$ and
$c'_l$ are all non-negative real numbers, then
$L''_i(G_{\alpha,\rho};\{c'_l \})\geq 0$. Hence, 
$$
L'_i(G_{\alpha,\rho};\{|c'_l|\})\leq 
L'_i(G_{\alpha,\rho};\{c'_l\})+L''_i(G_{\alpha,\rho};\{c'_l\})
= 
L_i(G_{\alpha,\rho};\{c'_l\})=|c'_{i}|,
$$
 which proves the Lemma.
\end{proof}
\begin{proof}[Proof of Claim] Since the degree of $\Psi(T)$ is $r$, $\vq>1$ and
  $\Psi(q^{i})\neq 0$ for $i\geq 1$,  
  there exists a constant $K_2>1$, 
  such that $|q|^{i\,r}\leq K_2\,|\Psi(q^{i})|$, for all $i\geq
  1$. Thus, it is enough to prove that
  there exist $k_1>0$ and $k_2>{}\ln K_2{}/\ln |q|$ such that 
  $r_i(\alpha,\rho,\ud)\leq \frac{s+t}{2}i^{2}+ri$, 
  for all $i\geq 1$ and all
  $(\alpha,\rho,\ud)\in \mathcal{F}'_{i}$. 
  Grouping the terms of $r_i$ and rearranging, we divide the
  inequality above into two parts so that
  it is enough
  to prove the existence of
  positive constants $k_1$ and $k_2$, such that for all
  $(\alpha,\rho,\ud)\in \mathcal{F}'_{i}$ and $i\geq 1$, the following
  inequalities hold:
 \begin{align}
    \label{eq:a_probar_1}
    \frac{s}{2}\sum_{j,l}l^{2}d_{j,l} + \sum_{j,l}j\,l\,d_{j,l}
      &\leq \frac{s}{2}i^2+r\,i+k_2,\\
     \label{eq:a_probar_2}
     \frac{t}{2} \sum_{j,l}  l^{2}\,d_{j,l} +
   \frac{t}{2}(\alpha+|\rho|)^{2}
      &\leq \frac{t}{2}i^2+k_1(\alpha+|\rho|).
 \end{align}
 We first prove the existence{} of $k_2$ such that{}
 inequality~\eqref{eq:a_probar_1} {}holds{}  and then {}we do the same for $k_1$ and equation
~\eqref{eq:a_probar_2}{}.

\noindent\textsl{Proof of inequality~\eqref{eq:a_probar_1}}. 
Call $r'_i(\alpha,\rho,\ud)$ the left hand side
of~\eqref{eq:a_probar_1}. 
Let $\mathcal{F}'_i=F_1\cup F_2$, where $F_1$ is the subset formed by
those $(\alpha,\rho,\ud)$ such that $l>i/2$ implies $d_{j,l}=0$, and
$F_2$ is its complement in $\mathcal{F}'_i$.
 We shall bound
  $r'_i$ in each of $F_1,F_2$ by a polynomial 
  $\bar{r}'(i)=\bar{r}'_2i^{2}+\bar{r}'_1i+\bar{r}'_0$, such that,
 either $\bar{r}'_2<\frac{s}{2}$ or $\bar{r}'_2=\frac{s}{2}$
  and $\bar{r}'_1\leq r$. Adjusting $k_2$ conveniently, one
  gets~\eqref{eq:a_probar_1}. 

  Let $(\alpha,\rho,\ud)\in F_1$.  
This implies  that if
 $d_{j,l}\neq 0$, then $l\leq i/2$. As $j\leq n$, and
 $\sum_{j,l}l\,d_{j,l}\leq i$ (which follows
 form~\eqref{eq:condiciones_F_i_A}),   
we conclude that
 \begin{displaymath}
   r'_i= \frac{s}{2}\sum_{j,l}l^{2}d_{j,l}+\sum_{j,l}j\,l\,d_{j,l}
     \leq  \frac{s\,i}{4}\sum_{j,l}ld_{j,l} + n\sum_{j,l}l\,d_{j,l}
     \leq \frac{s}{4}i^{2}+n\,i=\bar{r}'(i).
 \end{displaymath}
If $s\neq 0$, then $\bar{r}'_2<s/2$. Otherwise, 
$s=0$, and by Definition~\ref{de:q-gevrey_bound},  $r=n$,
hence $\bar{r_1}'\leq r$. This proves that the polynomial
$\bar{r}(i)$ satisfies our requirements.

Let $(\alpha,\rho,\ud)\in F_2$.
There exists a pair $(j_0,l_0)$ such that $l_0>i/2$ and
  $d_{j_0,l_0}\neq 1$.
By inequality~\eqref{eq:condiciones_F_i_A}, this pair is
unique and~$d_{j_0,l_0}=1$. In this case, equation~\eqref{eq:condiciones_F_i_A} reads as 
\begin{equation}\label{eq:eq:condiciones_F_i_A_particular}
  \alpha+\sum_{j,l\neq l_0}l\,d_{j,l}+l_0=i,\quad\text{and in particular
  }\,
 \sum_{j,l\neq l_0}l\,d_{j,l}\leq a,
\end{equation}
where $a=i-l_0 < i/2$. This implies also that for
 $l\neq l_0$ and $d_{j,l}\neq 0$ one has $l\leq a$. Therefore,
\begin{align*}
  r'_i&=
  \frac{s}{2}\left( \sum_{j,l\neq l_0} l^{2}d_{j,l}+l_0^{2}\right)+
  \sum_{j,l\neq l_0} j\,l\,d_{j,l}+j_0\,l_0\\
  &\le \frac{s}{2}\left(a\sum_{j,l\neq l_0} l\,d_{j,l}+(i-a)^{2}\right)+
  n\sum_{j,l\neq l_0}l\,d_{j,l}+j_0(i-a)\\
  &\le \frac{s}{2}(2a^{2}-2a\,i+i^{2})+n\,a+j_0(i-a)\\
  &=(s/2)i^{2}+(j_0-s\,a)i+(s\,a^{2}+n\,a-a\,j_0):=f_i(a).
\end{align*}
For a fixed $i$, the graph of $f_i(a)$ is {}either an upwards
parabola (case $s>0$) or an straight line (case $s=0$){}, 
so its maximum in an interval is reached at its endpoints.
The available range for $a$ depends on $j_0$.
If $j_0\leq r$, then {}there are no additional constrains on $l_0$, so{} $a\in [1,i/2[$, and we take
$\bar{r}'(i)=s/2\,i^{ 2}+r\,i+\bar{r}'_0$. We can chose $\bar{r}'_0$ in
such a way that $\max\{f_i(1),f_i(i/2)\}\leq \bar{r}'(i)$, for all $i\geq
1${} and $0\leq j_0\leq r${}, because{}
$f_i(i/2)\leq (s/4)\,{i}^{2}+n\,i$
and $f_i(1)\leq (s/2)\,{i}^{2}+ r\, i+s+n$.
{}%
If, on the other hand, $j_0>r$,
{} since $l_0>i/2$,  
case~\eqref{eq:condiciones_F'_i_A} does not hold, hence
case~\eqref{eq:condiciones_F'_i_B} holds;
{}
so that $h_{j_0}=1$ and $l_0\leq i-a_{j_0}$, and the
range for $a$ is $[a_{j_0},i/2[$. By definition of $s$, one has
$j_0-s\,a_{j_0}\leq r$ and $s>0$. Consider
$\bar{r}'(i)=s/2\,i^{2}+r\,i+\bar{r}'_0$, where $\bar{r}'_0$ is chosen
in such a way 
that $\max\{f_i(i/2),f_i(a_j);j>r, h_j=1\}\leq \bar{r}'(i)$, for all $i\geq
1$.  Such an $\bar{r}'_0$ exists because
{}as above $f_i(i/2)\leq (s/4)\,{i}^{2}+n\,i$ and{} $f_i(a_j)\leq
(s/2)i^{2}+(j-s\,a_j)i+s\,a_{j}^{2} + na_j$ and $j-sa_j\leq r$ for
those $j$ such that $j>r$ and $h_j=1$. 

\noindent\textsl{Proof of inequality~\eqref{eq:a_probar_2}}. 
For $t=0$, the inequality holds trivially, so we may assume that~$t>0$.
Let
$(\alpha,\rho,\ud)\in \mathcal{F}'_i$. Denote
$d_l=\sum_{j=0}^{n}d_{j,l}$, for $1\leq l\leq i-1$ and let
$l_0$ be the maximum of the indices $l$ such that $d_l\neq 0$.  From 
equations~\eqref{eq:condiciones_F_i_A} and
\eqref{eq:condiciones_F_i_B}, the fact that $l_0\geq 1$ and
$d_{l_0}\geq 1$, one gets:
\begin{displaymath}
  i-|\rho|=\alpha+\sum_{l}l\,d_l-\sum_{l}d_l
    =\alpha+\sum_{l\neq l_0}(l-1)d_{l}+(l_0-1)d_{l_0}\geq \alpha + l_0-1.
\end{displaymath}
From which $i-l_0\geq \alpha +|\rho| -1$.
Taking into account that $\alpha\geq 0$,
equation~\eqref{eq:condiciones_F_i_A},
and the fact that $l_0\geq l$
for any $l$ with $d_l\neq 0$, we conclude that 
\begin{align*}
  i^{2}&=(i-l_0+l_0)^{2}=(i-l_0)^{2}+l_0^{2}+2\,l_0\,(i-l_0)\\
      &\geq (\alpha+|\rho|-1)^{2}+l_0^{2}+2\,l_0
      \left( \sum_{l\neq l_0} l\,d_{l} + l_0(d_{l_0}-1)\right)\\
      &\geq (\alpha+|\rho|-1)^{2}+l_0^{2} + 
        \sum_{l\neq l_0}l^{2}d_l+l_0^{2}(d_{l_0}-1)\\
      &\geq (\alpha+|\rho|)^{2}-2(\alpha+|\rho|)+\sum_{l}l^{2}d_{l}.
\end{align*}
This gives inequality~\eqref{eq:a_probar_2} for
{}$k_1\geq t${} and finishes the proof of Theorem~\ref{th:Gevrey-order}.
\end{proof}

\section{Working example}
Let us consider the $q$-difference equation $P[y]=0$ of order 5 and
  degree 6, where
  \begin{displaymath}
    P=4\,{Y_1}^4
  -9\,{Y_0}^2\,{ Y_1}\,{ Y_2}
 +2\,{ Y_0}^3\,{ Y_2}
-x^3\,{ Y_0}^4\,{ Y_5}^2
+{{x{ Y_0}\,{ Y_2}}\over{q^4}}
-{{x^3\,{ Y_2}}\over{q^4}}
-x^3\,{ Y_0}+x^5,
 \end{displaymath}
and  $q=4$.
Its Newton Polygon is $\mathcal{N}(P)$ in Figure~\ref{picture1}. It
has four vertices $v_0=(3,6), v_1=(0,4),
v_2= (1,2)$, $v_3=(5,0)$ and three sides $L_1,L_2$ and $L_3$ with
respective co-slopes $\gamma_1=-3/2$, $\gamma_2=1/2$ and
$\gamma_3=2$. We apply some steps of Procedure~\ref{Procedure-1} to
$P$. As $P$ is a polynomial, $\mu_{-1}(P)=-\infty$. 

{}
\begin{figure}[h!]
  \begin{tikzpicture}[scale=0.4]
    \draw[step = 0.5,gray!30!white, thin] (0,0) grid (9,6);
    \foreach \i in {1,3,...,7}
    {\draw (\i,0) node[anchor=north] {$\i$};}
    \foreach \i in {1,3,...,5}
    {\draw (0,\i) node[anchor=east] {$\i$};}
    \draw(0,6) -- (0,0) -- (9,0);
    \draw[fill=black] (0,4) circle(3pt);
    \draw[fill=black](1,2) circle(3pt);
    \draw[fill=black] (3,1) circle(3pt);
    \draw[fill=black] (5,0) circle(3pt);
    \draw[fill=black] (3,6) circle(3pt);
    \draw[line width=1pt](9,6) -- (3,6) -- (0,4) -- (1,2) --
    (3,1) -- (5,0) -- (9,0);
    \draw[fill=gray!10!white, opacity=0.3]
    (9,6) -- (3,6) -- (0,4) -- (1,2) --
    (3,1) -- (5,0) -- (9,0);
    \draw (5,3) node{$\mathcal{N}(P)$};
  \end{tikzpicture}\hfil
  \begin{tikzpicture}[scale=0.4]
    \draw[step = 0.5,gray!30!white, thin] (0,0) grid (16,6);
    \foreach \i in {1,3,...,15}
    {\draw (\i,0) node[anchor=north] {$\i$};}
    \foreach \i in {1,3,5}
    {\draw (0,\i) node[anchor=east] {$\i$};}
    \draw(0,6) -- (0,0) -- (16,0);
    \draw[fill=black] (0,4) circle(3pt);
    \draw[fill=black](1,2) circle(3pt);
    \draw[fill=black] (8,0) circle(3pt);
    \draw[fill=black] (3,6) circle(3pt);
    \draw[line width=1pt](16,6) -- (3,6) -- (0,4) -- (1,2) -- (8,0) -- (16,0);
    \draw[fill=gray!10!white, opacity=0.3]
    (16,6) -- (3,6) -- (0,4) -- (1,2) -- (8,0) -- (16,0);
    \foreach \x/\y in {2/3,4/2,6/1}
    {\draw[fill=gray,gray](\x,\y)circle(3pt);};
    \foreach \x/\y in {5/5,7/4,9/3,11/2,13/1,15/0}
    {\draw[fill=gray,gray](\x,\y)circle(3pt);};
    \draw (13,3) node{$\mathcal{N}(P_1)$};
  \end{tikzpicture}\\[5pt]
  \centering
  \begin{tikzpicture}[scale=0.4]
    \draw[step = 0.5,gray!30!white, thin] (0,0) grid (24,6);
    \foreach \i in {1,3,...,23}
    {\draw (\i,0) node[anchor=north] {$\i$};}
    \foreach \i in {1,3,5}
    {\draw (0,\i) node[anchor=east] {$\i$};}
    \draw(0,6) -- (0,0) -- (16,0);
    \draw[fill=black] (0,4) circle(3pt);
    \draw[fill=black](1,2) circle(3pt);
    \draw[fill=black] (3,6) circle(3pt);
    \draw[fill=black] (4.5,1) circle(3pt);
    \draw[fill=black] (9.5,0) circle(3pt);
    \draw[line width=1pt](24,6) -- (3,6) -- (0,4) -- (1,2) -- (4.5,1) -- (9.5,0) -- (24,0);
    \draw[fill=gray!10!white, opacity=0.3]
    (24,6) -- (3,6) -- (0,4) -- (1,2) -- (4.5,1) -- (9.5,0) -- (24,0);
    \foreach \x/\y in
{15/0,13/1,16.5/0,11/2,14.5/1,18/0,9/3,12.5/2,16/1,
19.5/0,7/4,10.5/3,14/2,17.5/1,21/0,6/1,5/5,
8.5/4,12/3,15.5/2,19/1,22.5/0,6.5/5,10/4,13.5/3,
17/2,20.5/1,24/0,4/2,7.5/1,11/0,2/3,5.5/2,9/1,
12.5/0,3.5/3,7/2,10.5/1,14/0}
{\draw[fill=gray,gray](\x,\y)circle(3pt);};
\draw(21,3) node{$\mathcal{N}(P_2)$};
  \end{tikzpicture}  
  \caption{Newton polygons $\mathcal{N}(P)$, $\mathcal{N}(P_1)$ and $\mathcal{N}(P_2)$.}
  \label{picture1}
\end{figure}
{}


In order to find all the possible starting terms $c_0\,x^{\mu_0}$ of
a solution, we need to consider all the vertices and sides of
$\mathcal{N}(P)$ according as formul{\ae}~\eqref{eq:nec_cond_lados}
and~\eqref{eq:nec_cond_vertices}.
For the vertices, we get: 
$\Psi_{(P;v_0)}(T)=-3\,T^{10}$, $\Psi_{(P;v_1)}(T)=T^{2}(T-2)(4T-1)$,
 $\Psi_{(P;v_2)}(T)=T^{2}/q^{4}$, $\Psi_{(P;v_3)}(T)=1$. 
Hence, for $j=0,1,2,3$ the only
satisfiable formula in~\eqref{eq:nec_cond_vertices} is the one
corresponding to 
vertex $v_1$, that is $\Psi_{(P;v_1)}(q^{\mu})=0$ and $-3/2<\mu<1/2$. 
This gives $\mu=-1$ for any nonzero $c$.
For the sides, we get: $\Phi_{(P;\gamma_1)}(c)=
q^{-15}c^{4}(2\,q^{12}-9\,q^{21/2}+4\,q^{9}-c^{2})$,
$\Phi_{(P;\gamma_2)}(c)=c^2/64$, and $\Phi_{(P;\gamma_3)}(c)=(c-1)^2$.
According as~\eqref{eq:nec_cond_lados}, the only possible starting
terms related to the sides are  $\pm 1024 \sqrt{15}\, x^{-3/2}$ and
$x^{2}$. Notice that $L_2$ gives rise to no starting term. 

Following Procedure~\ref{Procedure-1} we choose $x^{2}$, that is
$c_0=1$ and $\mu_0=2$. The polynomial $P_1=P[x^{2}+Y]$ has $33$ terms
that we do not exhibit; its Newton Polygon is $\mathcal{N}(P_1)$
in  Figure~\ref{picture1}. Since $y=0$ is not a solution of $P_1[y]=0$
because $\mathcal{C}(P_1)$ has points on the $OX$-axis, 
we need to perform step $(a.2)$ of Procedure~\ref{Procedure-1},
that is  finding  $\mu>\mu_0=2$ and $c\neq
0$ so that 
$\Psi_{(P_1;\mu)}(c)=0$. Thus, we can only use the vertices $v_2$
and $v'_3$ and side $L'_3$. 

For formula~\eqref{eq:nec_cond_vertices} we get  
$\Psi_{(P_1;v_2)}(T)=\Psi_{(P;v_2)}$ and that $\Psi_{(P_1;v'_3)}(T)$ is a
constant, hence those vertices do not give rise to  subsequent
terms. For side $L'_3$, we get  $\mu_1=7/2$ and 
$\Psi_{(P_1;\mu_1)}(c)=64\,c^2+225792$, so that there are two
possibilities for $c_1$. We choose $c_1=21\sqrt{8}\sqrt{-1}$ and
go on with Procedure~\ref{Procedure-1}. 

Let us consider
$P_2=P_1[c_1x^{\mu_1}+Y]$ whose Newton Polygon is $\mathcal{N}(P_2)$
having a side $L''_3$ of the same co-slope as $L'_3$ and another
$L''_4$ of co-slope $5$. As vertex $v''_3$ gives
$\Psi_{(P_2;v''_3)}(q^{\mu})=q^{2\mu}+ 16384$ which has no real
solutions, it is useless to find $\mu_2$. Hence we must use $L''_4$
which gives $\mu_2=5$ and (after a trivial computation)
$c_2=-88984/65$.

Notice that, after performing the first two steps detailed above and
getting $x^2+21\sqrt{8}\sqrt{-1}\,x^{7/2}$, the fact that $v''_3$
gives rise to a formula which no $\mu>7/2$ can satisfy and that it has
ordinate $1$ implies that, taking $P^{*}=\mshift{P_2}{{\mu_1}}$, the
equation  $P^{*}[y]=0$ is solved form. Therefore, by
Lemma~\ref{le:quasi_solved_form_case} there exists a unique solution
of $P[y]=0$ of the form: 
\begin{displaymath}
  y(x)=x^2+21\sqrt{8}\sqrt{-1}\,x^{7/2}+o(x^{7/2}).
\end{displaymath}
Notice also that as $P^*\in \mathbb{C}[[x^{1/2}]][Y]$,
Lemma~\ref{le:quasi_solved_form_case} guarantees as well that $y(x)\in
\mathbb{C}[[x^{1/2}]]$. 

The pivot point of $P$ with respect to $y(x)$ is 
$Q(y(x);P)=v''_3=(4.5,1)$. {}This means that, from now on, for each transformation $P_i[c_ix^{\mu_i}+Y]$, the supporting line $L_{(P_i;\mu_i)}$ will intersect $\mathcal{N}(P_i)$ on its lowest side, and the topmost vertex of this side will always be that point $(4.5,1)$. Moreover,{}
$Y_2$ is the highest order appearing effectively in it, hence $r=2$ in
Definition~\ref{def:q-gevrey-order}. There being no monomials with
$Y_3$ or $Y_4$ in $P$ we only need consider the pivot point relative
to $Y_5$ which is the point $Q_{e_5}(y(x);P)=(13,1)$ (notice that
$\mathcal{C}_{e_5}(P_2)$ is in the region above and to the right of
the dashed line). Applying  Definition~\ref{def:q-gevrey-order}
formally we would get $s(y(x);P)=\frac{5-2}{13-4.5}=6/17$.

As regards the growth of the coefficients of $y(x)$, we transform it
into a formal power series in order to apply
Theorem~\ref{th:Gevrey-order}. We do this by means of the ramification
$x=t^{2}$. The series $y(t)$ is a solution of a $\bar{q}$-difference
equation $\bar{P}[y]=0$ derived from $P$ with $\bar{q}=q^{1/2}$. The
ramification induces a horizontal homothecy  of ratio $2$ on the cloud
of points of $P$, $P_1$ and $P_2$. Hence
$s(y(t);\bar{P})=\frac{5-2}{2(13-4.5)}= 3/17$ is a bound for the
$\bar{q}$-Gevrey order of~$y(t)$.

\bibliographystyle{amsplain}
\providecommand{\bysame}{\leavevmode\hbox to3em{\hrulefill}\thinspace}
\providecommand{\MR}{\relax\ifhmode\unskip\space\fi MR }
\providecommand{\MRhref}[2]{%
  \href{http://www.ams.org/mathscinet-getitem?mr=#1}{#2}
}
\providecommand{\href}[2]{#2}


\end{document}